\documentclass[12pt]{article}

\usepackage{geometry}
\usepackage{fullpage}
\usepackage{microtype}
\usepackage[utf8]{inputenc}
\usepackage{setspace}

\usepackage[round]{natbib}
\usepackage{hyperref}
\usepackage[title]{appendix}

\usepackage{graphicx}
\usepackage{floatrow}

\usepackage{amsmath, amssymb, amsthm}
\theoremstyle{plain}
\newtheorem{thm}{Theorem}[section]

\newtheorem{lem}[thm]{Lemma}
\newtheorem{prop}[thm]{Proposition}
\newtheorem{corol}[thm]{Corollary}
\theoremstyle{definition}

\newtheorem{defn}{Definition}
\newtheorem{exm}{Example}

\newenvironment{customexm}[1]
  {\innercustomexm}
  {\endinnercustomexm}

\newtheorem{assmp}{Assumption}
\theoremstyle{remark}

\newcommand{\E}{\mathbb{E}}
\newcommand{\I}{\mathbb{I}}
\newcommand{\var}{\operatorname{var}}
\newcommand{\cov}{\operatorname{cov}}

\newcommand{\pr}{\mathsf{P}}
\newcommand{\dd}{\mathrm{d}}

\newcommand{\tr}{\operatorname{tr}}
\newcommand{\diag}{\operatorname{diag}}
\newcommand{\tsp}{\mathsf{T}}
\newcommand{\rN}{\mathcal{N}}
\newcommand{\emax}{\operatorname{\bar{e}}}
\newcommand{\emin}{\operatorname{\underline{e}}}
\newcommand{\R}[1]{\mathbb{R}^{#1}}

\DeclareMathOperator*{\argmax}{arg\,max}

\begin{document}

\title{Confidence Regions Near Singular Information and Boundary Points With
Applications to Mixed Models}

\author{Karl Oskar Ekvall \quad Matteo Bottai\\
{\normalsize Division of Biostatistics, Institute of Environmental Medicine, Karolinska Institutet} \\
{\tt \normalsize karl.oskar.ekvall@ki.se \quad matteo.bottai@ki.se}}
\date{\normalsize \today}

\maketitle

\onehalfspacing

\abstract{
  \noindent We propose confidence regions with asymptotically correct uniform
    coverage probability of parameters whose Fisher information matrix can be
    singular at important points of the parameter set. Our work is motivated by
    the need for reliable inference on scale parameters close or equal to zero in
    mixed models, which is obtained as a special case. The confidence regions are
    constructed by inverting a continuous extension of the score test statistic
    standardized by expected information, which we show exists at points of
    singular information under regularity conditions. Similar results have
    previously only been obtained for scalar parameters, under conditions stronger
    than ours, and applications to mixed models have not been considered. In
    simulations our confidence regions have near-nominal coverage with as few as
    $n = 20$ observations, regardless of how close to the boundary the
    true parameter is. It is a corollary of our main results that the proposed
    test statistic has an asymptotic chi-square distribution with degrees of
    freedom equal to the number of tested parameters, even if they are
    on the boundary of the parameter set.
}

\section{Introduction}
  In mixed models, the importance of a random effect is often assessed by
  inference on a variance or scale parameter. A parameter near zero typically
  indicates a weak effect and many tests for whether a variance is equal to zero
  have been proposed \citep{Stram.Lee1994, Stram.Lee1995, Lin1997,
  Stern.Welsh2000, Hall.Praestgaard2001, Verbeke.Molenberghs2003,
  Crainiceanu.Ruppert2004, Zhu.Zhang2006, Fitzmaurice.etal2007, Greven.etal2008,
  Giampaoli.Singer2009, Saville.Herring2009,Sinha2009, Wiencierz.etal2011,
  Drikvandi.etal2013, Qu.etal2013, Wood2013, Baey.etal2019, Chen.etal2019}. In
  addition to being of practical interest, this case is of theoretical interest
  because the parameter is a boundary point of the parameter set and,
  consequently, asymptotic distributions of common test statistics are
  non-standard. For example, the asymptotic distribution of the likelihood ratio
  test statistic for a variance equal to zero is a non-trivial mixture of
  chi-square distributions \citep{Self.Liang1987, Geyer1994, Stram.Lee1994},
  whereas for a strictly positive variance it is a chi-square distribution with
  one degree of freedom. More generally, the asymptotic distributions of common
  test statistics under a sequence of parameters tending to a boundary point as
  the sample size increases, can be different depending on the rate of that
  convergence \citep{Rotnitzky.etal2000, Bottai2003}. While this need not be an
  issue when testing a point null hypothesis, it complicates more ambitious
  inference: coverage probabilities of confidence regions obtained by inverting
  such test statistics often depend substantially on how close to the boundary
  the true parameter is, leading to unreliable inference.
  The coverage of boundary points is addressed by existing
  methods, but the coverage of points near the boundary is not. We address both using
  a connection between boundary points and points where the Fisher information
  matrix is singular, which we call critical points. More specifically, we show
  many boundary points of interest are also critical points and use this to
  construct confidence regions that (i) have asymptotically correct uniform
  coverage probability, (ii) have empirical coverage close to nominal in
  simulations, and (iii) are straightforward to implement for many mixed models,
  including the ubiquitous linear mixed model. We know of no other confidence
  regions with properties (i) and (ii) in the settings we consider.
  Because not all critical points are boundary points, the
  proposed regions are useful in many settings where methods for inference on
  the boundary do not apply.

  To be more precise about the connections between boundary points, critical
  points, and mixed models, suppose a parameter $\theta \in \R{}$ scales a
  random effect with mean zero and unit variance in a mixed model, implying
  $\theta^2$ is a variance. For example, $\theta$ can be the coefficient of a
  random effect in a generalized linear mixed model. If
  the random effect has a distribution asymmetric around zero, inference on both
  the sign and magnitude of $\theta$ may be possible, in which case $\theta = 0$
  is not a boundary point. In other settings the sign is unidentifiable and
  inference on $\theta \geq 0$ and $\theta^2$ essentially equivalent;
  $\theta = 0$ is a boundary point. Either way, we will show that in quite
  general mixed models $\theta = 0$ is a critical point. More generally, when
  $\theta$ is a vector of parameters whose $j$th element $\theta_j$ is a scale
  parameter, $\theta$ is often a critical point if $\theta_{j} = 0$.
  Whether in a mixed model or not, inference near critical points is known to be
  difficult: the likelihood ratio test statistic and the maximum likelihood
  estimator behave quite differently than under classical conditions
  \citep{Rotnitzky.etal2000} and confidence regions obtained by inverting common
  test statistics such as the Wald, likelihood ratio, and score standardized by
  observed information have incorrect coverage probabilities \citep{Bottai2003}.
  By contrast, we show that, under regularity conditions, (i) the score test
  statistic standardized by expected Fisher information has a continuous
  extension at critical points and, when inverted, (ii) that test statistic
  gives a confidence region with asymptotically correct uniform coverage
  probability on compact sets. That is, the confidence region
  $\mathcal{R}_n(\alpha)$ based on $n$ observations with nominal level $(1 -
  \alpha) \in (0, 1)$ satisfies, for any compact subset $C$ of the parameter
  set,
  \begin{equation}\label{eq:unif:cover}
    \lim_{n\to \infty} \inf_{\theta \in C} \pr_\theta\{\theta \in
    \mathcal{R}_n(\alpha)\} = 1 - \alpha,
  \end{equation}
  where the subscript $\theta$ on $\pr$ indicates the data on which
  $\mathcal{R}_n$ is based have the distribution indexed by $\theta$.
  Importantly, $C$ can include neighborhoods of boundary and
  critical points. It is an immediate corollary that the test rejecting a null
  hypothesis $\theta = \theta_0$ when $\theta_0 \notin \mathcal{R}_n(\alpha)$
  has asymptotic size $\alpha$ for any $\theta_0$.  These results apply to but
  are not restricted to mixed models. Moreover, in contrast to many methods for
  testing variance parameters in mixed models, ours in general does not require
  the implementation of simulation algorithms or computing the maximum
  likelihood estimator, which can be complicated in non-linear mixed models.

  The connection between singular information and boundary settings has been
  noticed previously \citep{Cox.Hinkley2000, Chesher1984, Lee.Chesher1986}, but
  results similar to ours have only been obtained for settings with a single
  scalar parameter \citep{Bottai2003}. We recover those results as special
  cases, and under weaker conditions. Asymptotic properties of maximum
  likelihood estimators and likelihood ratio test statistics have been
  established for the special case where the rank of the Fisher information
  matrix is one less than full \citep{Rotnitzky.etal2000}, but confidence
  regions were not considered. Notably, our theory does not require the
  Fisher information to have a particular rank and, indeed, we will see that in
  mixed models the rank is often full minus the number of scale parameters equal
  to zero.

  We end this section with a simple example that illustrates how critical
  points often appear in mixed models. After the example, we give additional
  background and develop theory in Section \ref{sec:general}. In Section
  \ref{sec:mixed} we discuss the application to mixed models and verify the
  conditions of the theory from Section \ref{sec:general} in two such models.
  Section \ref{sec:sims} presents simulation results, Section \ref{sec:datex}
  contains a data example, and Section \ref{sec:final} concludes.
  \begin{exm} \label{ex:lmm}
  Suppose, for $i = 1, \dots, n$ and $j = 1, \dots, r$,
  \[
    Y_{i,j} = \theta W_i + E_{i,j},
  \]
  where $\theta \in [0, \infty)$ and all $W_i$ and $E_{i, j}$ are independent
  standard normal random variables. For example, $r$ can be the number of
  observations in a cluster, $n$ the number of clusters, and the random effect
  $W_i$ used to model heterogeneity between clusters or dependence between
  observations in the same cluster. The $Y_i = [Y_{i1}, \dots, Y_{ir}]^\tsp$, $i
  = 1, \dots, n$, are independent and multivariate normally distributed with
  mean zero and common covariance matrix $\Sigma(\theta) = \theta^2 1_r 1_r^\tsp
  + I_r$, where $1_r$ is an $r$-vector of ones and $I_r$ the $r\times r$
  identity matrix. With some algebra (Supplementary Material), one can show the log-likelihood for one
  observation $y_i\in \R{r}$ is
  \[
  \log f_\theta(y_i) =  -\frac{1}{2} \log(1 + \theta^2 r) - \frac{1}{2}
  \left\{y_i^\tsp y_i - (y_i^\tsp 1_r)^2 \theta^2 / (1 + r\theta^2) \right\}.
  \]
  Differentiating with respect to $\theta$ gives the score for one
  observation:
  \[
    s(\theta; y_i) = -\frac{r \theta}{1 + r \theta^2} + (y_i^\tsp 1_r)^2
   \frac{\theta}{(1 + r\theta^2)^2}.
  \]
  At $\theta = 0$, this score is zero for any $y_i\in \R{r}$ and, hence, the
  Fisher information is zero; that is, $\theta = 0$ is a critical point. There
  are no other critical points because the second term of $s(\theta; Y_i)$,
  $Y_i\sim f_\theta$, has positive variance when $\theta \neq 0$.

  Figure \ref{fig:ex_1} shows two (pseudo) randomly generated realizations of
  the log-likelihood in this example. For one dataset the critical point is a
  global maximizer and for the other a local minimizer. One can show that if the
  true $\theta$ is small, both types of outcomes have probability approximately
  $1 / 2$. In particular, the score always vanishes at $\theta = 0$ and the
  maximum likelihood estimator for $\theta$ is zero with probability
  approximately $1/2$. The maximum likelihood estimator's mass at zero gives
  some intuition for why confidence regions that directly or indirectly use
  asymptotic normality of that estimator can have poor coverage properties near
  the critical point (see \citeauthor{Rotnitzky.etal2000},
  \citeyear{Rotnitzky.etal2000}, and \citeauthor{Bottai2003},
  \citeyear{Bottai2003}, for details). In this example, the critical point is at
  the boundary since we assumed $\theta \geq 0$ for identifiability, but $\theta
  = 0$ would still be a critical point if the $W_i$ had an asymmetric
  distribution and the sign of $\theta$ were identifiable.

\begin{figure}[htb]
 \centering
 \includegraphics[width = 0.5 \textwidth]{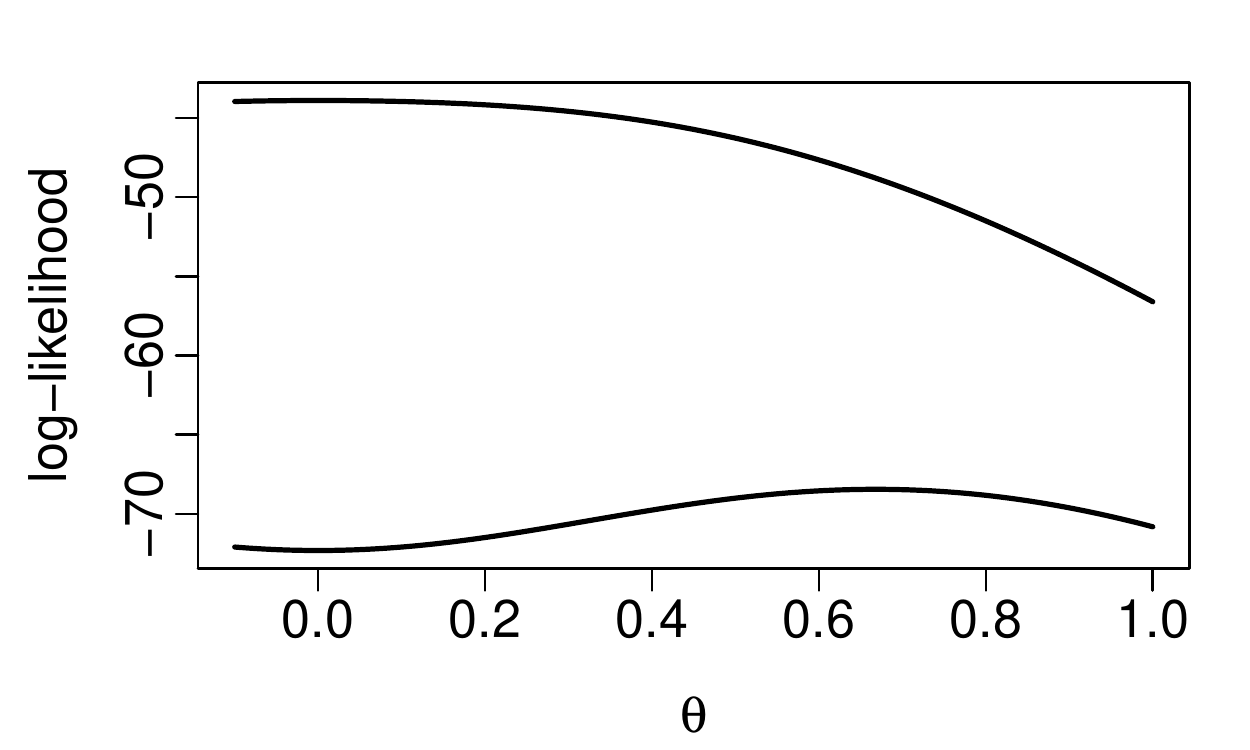}
 \caption{Log-likelihoods for two independent samples of $n = 100$
 independent observations each, generated with $\theta = 1 /
 \sqrt{10}$ and $r = 1$}
 \label{fig:ex_1}
\end{figure}
\end{exm}

\section{Inference near critical points} \label{sec:general}
\subsection{Definitions and assumptions}
  Suppose, independently for $i = 1, \dots, n$, $Y_i \in \R{r_i}$, $r_i \geq 1$,
  has density $f^i_\theta$ against a dominating measure $\gamma_i$, for $\theta$
  in a parameter set $\Theta \subseteq \R{d}$. For simplicity, we often write
  $f_\theta(y_i)$ in place of $f^i_\theta(y_i)$, where $y_i$ is an arbitrary
  realization of $Y_i$. Let $y^n = (y_1, \dots, y_n)$ be a realization of $Y^n =
  (Y_1, \dots, Y_n)$ and $\nabla$ denote the derivative operator with respect to
  $\theta$. Let also $\ell^i(\theta; y_i) = \log f_\theta(y_i)$, $\ell_n(\theta;
  y^n) = \sum_{i = 1}^n \ell^i(\theta; y_i)$, $s^i(\theta; y_i) = \nabla
  \ell^i(\theta; y_i)$, $s_n(\theta; y^n) = \nabla \ell_n(\theta; y^n)$,
  $\mathcal{I}^i(\theta) = \cov_\theta\{s^i(\theta; Y_i)\}$, and
  $\mathcal{I}_n(\theta) = \cov_\theta\{s_n(\theta; Y^n)\}$, where the subscript
  on $\cov$ indicates $Y^n = (Y_1, \dots, Y_n)$ has the distribution indexed by
  $\theta$; we may omit such subscripts when it has been explicitly stated which
  distribution the random variables have. Focus will be on inference near points
  where the Fisher information $\mathcal{I}_n$ is singular.

  \begin{defn}\label{def:crit}
    We say a point $\theta \in \Theta$ is critical if $\mathcal{I}_n(\theta)$ is
    singular and non-critical otherwise.
  \end{defn}

  Denote an arbitrary set of orthonormal eigenvectors of
  $\mathcal{I}_n(\theta)$ by $\{v_{\theta 1}^n, \dots, v_{\theta d}^n\}$. Then
  Definition \ref{def:crit} says, equivalently, that $\theta$ is a critical
  point if $s_n(\theta; Y^n)^\tsp v_{\theta j}^n$ is constant almost surely
  $\pr_\theta$ for at least one $j$. In our motivating examples, $\theta$ is the
  parameter vector in a mixed model but there are other potential applications
  for the theory in this section. For example, \citet[Section
  3.1.3]{Azzalini.Capitanio2014} consider a three-parameter multivariate
  skew-normal distribution with singular Fisher information.

  As mentioned in the Introduction, inverting common test statistics typically
  does not give confidence regions satisfying \eqref{eq:unif:cover} for subsets of
  the parameter set including critical points \citep{Bottai2003}. To address
  this, we will consider the score test statistic standardized by expected
  Fisher information:
  \begin{equation} \label{eq:teststat}
    T_n(\theta; y^n) = s_n(\theta; y^n)^\tsp
    \mathcal{I}_n(\theta)^{-1} s_n(\theta; y^n).
  \end{equation}
  The right-hand side of \eqref{eq:teststat} is undefined at critical $\theta$
  since $\mathcal{I}_n(\theta)$ is not invertible there. When
  $\theta$ is a scalar parameter, the
  following calculation can be formalized (proof of Theorem \ref{thm:cont}) to
  define a continuous extension at critical $\theta$:
  \begin{align*}
    \lim_{\delta \to 0}T_n(\theta + \delta;y^n) &= \lim_{\delta\to 0} [\{s_n(\theta + \delta; y^n) / \delta\}^2 \var_{\theta + \delta}\{s_n(\theta + \delta; Y^n) / \delta\}^{-1}]
    \\
    &= \{\nabla s_n(\theta; y^n)\}^2 \var_\theta\{  \nabla s_n(\theta; Y^n)\}^{-1},
  \end{align*}
  where the first line results from dividing and multiplying $T_n(\theta +
  \delta; y^n)$ by $\delta^2$, assuming $\theta + \delta$ is a non-critical
  point, and the second line uses the definition of derivative of $\theta\mapsto
  s(\theta; \cdot)$ at $\theta$ together with regularity conditions, ensuring
  among other things that $s_n(\theta; y^n) = 0$ at critical $\theta$. That is,
  at critical points the continuous extension of the test statistic for one
  parameter is based on the second derivative of the log-likelihood. Notably, if
  expected information is replaced by observed, which is not in general zero at
  critical points, the limit is typically zero.

  Now, our interest is twofold, namely conditions that ensure (i) $T_n$ has a
  continuous extension to critical points when $d \geq 1$ and (ii) confidence
  regions obtained by inverting that extension satisfy \eqref{eq:unif:cover}.
  Both (i) and (ii) are substantially more complicated when $d > 1$ than when
  $d = 1$ since the eigenvectors and the rank of the Fisher information
  become important.

  We will use the following assumptions.

  \begin{assmp}\label{assmp:support}
    For every $i = 1, 2, \dots$, the distribution $f_{\theta}(y_i) \gamma_i(\dd
    y_i)$ has the same null sets for every $\theta \in \Theta$.
  \end{assmp}
  In light of Assumption \ref{assmp:support}, we will often write "almost every
  $y_i$" without specifying the measure, implicitly referring to
  $f_\theta(y_i)\gamma_i(\dd y_i)$ for any $\theta \in \Theta$ or the corresponding
  product measure when such statements are about $y^n$. Notably, the null sets
  can be different for different $i$. Indeed, the $Y_i$ need not even take
  values in the same space.

  \begin{assmp} \label{assmp:deriv}

    For every $\theta' \in \Theta$, there exists an open ball $B = B(\theta')
    \subseteq \R{d}$ centered at $\theta'$ on which, for every $i = 1, 2, \dots$
    and almost every $y_i$, partial derivatives of $\theta \mapsto
    \ell^i(\theta; y_i)$ of an order $k = k(\theta') \geq 2$ exist and are
    jointly continuous in $(\theta, y_i)$. Moreover, there exists a $\delta =
    \delta(\theta') > 0$ such that every partial derivative of order at most $k$
    satisfies
    \begin{equation} \label{eq:deriv_cond}
      \sup_{i = 1, 2, \dots} \sup_{\theta \in B \cap \Theta, \tilde{\theta} \in
      B}\int \left\vert \frac{\partial^l}{\partial \theta_1^{l_1}\cdots \partial \theta_d^{l_d}}\ell^i(\theta; y_i) \bigg\rvert_{\theta  = \tilde{\theta}} \right\vert^{2 + \delta}
      f_\theta(y_i)\, \gamma_i(\dd y_i) < \infty,
    \end{equation}
    where $l_1, \dots, l_d$ are non-negative integers summing to $l \leq k$.
  \end{assmp}

  The ball $B$ in Assumption \ref{assmp:deriv} can include
  points not in $\Theta$. Then the assumption should be understood as saying
  there exist extensions of the partial derivatives in \eqref{eq:deriv_cond} to
  $B \times \mathcal{Y}_i$, for some set $\mathcal{Y}_i$ of full
  measure, satisfying the outlined conditions. We have sacrificed some
  generality for clarity in that, as will be clear later, the moment condition
  \eqref{eq:deriv_cond} can be weakened to apply to only certain partial
  derivatives of the order $k$. One consequence of Assumption \ref{assmp:deriv}
  is that $\E_\theta\{s_n(\theta; Y^n)\} = 0$ (Lemma D.2 in the Supplementary
  Material). Hence, $\mathcal{I}_n(\theta) v_{\theta
  j}^n = 0$ is equivalent to $s_n(\theta; y^n)^\tsp v_{\theta j}^n = 0$ for
  almost every $y^n$.

  For symmetric matrices $A$ and $B$ we write $A
  \succeq B$ or $B\preceq A$ if $A - B$ is positive semi-definite.

  \begin{assmp}\label{assmp:loew_order}
    There exist continuous $c_1, c_2:\Theta \to (0, \infty)$ such that, for
    every $i = 1, 2, \dots$ and $\theta \in \Theta$,
    \[
      c_1 (\theta) \mathcal{I}^1(\theta) \preceq \mathcal{I}^i(\theta) \preceq c_2(\theta)\mathcal{I}^1(\theta).
    \]
  \end{assmp}
  Assumption \ref{assmp:loew_order} holds with $c_1 = c_2 \equiv 1$ if the $Y_i$
  are identically distributed since in that case $\mathcal{I}^i =
  \mathcal{I}^1$. More generally, it controls the null space of $\mathcal{I}_n$:
  if $\mathcal{I}_n(\theta) v = 0$ for some $n$ and $v\in \R{d}$, then
  $\mathcal{I}^1(\theta) v = 0$ and, by Assumption \ref{assmp:loew_order},
  $\mathcal{I}^i(\theta) v = 0$ for every $i$. Thus, the critical points in
  Definition \ref{def:crit} do not depend on $n$. At non-critical points,
  Assumption \ref{assmp:loew_order} says, loosely speaking, that information in
  individual observations does not grow without bound or tend to zero. The
  choice of $\mathcal{I}^1$ is arbitrary in the sense that Assumption
  \ref{assmp:loew_order} holds as stated if and only if it holds with
  $\mathcal{I}^1$ replaced by any other $\mathcal{I}^i$.

  The next assumption requires some more notation. Let $\nabla_j^l$, $j = 1,
  \dots, d$, $l = 0, 1, \dots$, denote the $l$th order derivative operator with
  respect to $\theta_j$, with $\nabla_j = \nabla_j^1$. For
  example, $\nabla_j^l \ell_n(\tilde{\theta}; y^n) = \partial^l \ell_n(\theta;
  y^n) / \partial \theta_j^l \mid_{\theta = \tilde{\theta}}$. For every $\theta
  \in \Theta$ and $n = 1, 2, \dots$, let $k_j = k_j(\theta, n) \in \{1, 2,
  \dots\}$ ($j = 1, \dots, d$) be integers to be specified shortly and define,
  for $i = 1, \dots, n$,
  \[
    \tilde{s}_n^i(\theta; y_i) = [\{\nabla_{1}^{k_1 - 1} s^i(\theta; y_i)\}^\tsp
    v_{\theta 1}^n, \dots, \{\nabla_{d}^{k_d - 1} s^i(\theta; y_i)\}^\tsp
    v_{\theta d}^n]^\tsp \in \R{d}.
  \]
  Let also
  \[
      \tilde{s}_n(\theta; y^n) = \sum_{i = 1}^n \tilde{s}^i_n(\theta; y_i) ; \quad\tilde{\mathcal{I}}_n(\theta) = \cov_\theta\{\tilde{s}_n(\theta; Y^n)\}.
  \]
  We call
  $\tilde{s}_n(\theta; y^n)$ a modified score because it can understood as a
  two-step modification of $s_n(\theta; y^n)$: First $s_n(\theta; y^n)$ is
  rotated to $[v_{\theta 1}^n , \dots, v_{\theta d}^n]^\tsp s_n(\theta; y^n)$,
  in which the elements are uncorrelated linear combinations of first order
  derivatives of $\ell_n(\theta; y^n)$. Second, if $k_j > 1$ for some $j$, the
  linear combination in the $j$th element is replaced by a linear combination of
  higher-order partial derivatives. The idea to be formalized is to replace
  elements in $[v_{\theta 1}^n, \dots, v_{\theta d}^n]^\tsp s_n(\theta; y^n)$
  that are zero for almost every $y^n$ at a critical point by linear
  combinations of higher-order derivatives that are not.
  Note also $\tilde{s}_n(\theta; y_i)$ and $\tilde{\mathcal{I}}_n(\theta)$ depend on
  the $v_{\theta j}^n$. In particular, the set $\{v_{\theta 1}^n, \dots,
  v_{\theta d}^n\}$ is not uniquely determined in general and different choices
  lead to different $\tilde{s}_n(\theta; y^n)$. Our next assumption says three
  technical conditions must hold for at least one choice.

  \begin{assmp} \label{assmp:pd}
  For every $\theta \in \Theta$ and $n = 1, 2, \dots$, there exist integers
    $k_j = k_j(\theta, n) \in \{1, \dots, d\}$ $(j = 1, \dots, d)$ and a set of
    orthonormal eigenvectors $\{v_{\theta 1}^n, \dots, v_{\theta d}^n\}$ of
    $\mathcal{I}_n(\theta)$ such that:
    \begin{enumerate}
      \item[(i)~~] $\tilde{\mathcal{I}}_n(\theta)$ is positive definite;
      \item[(ii)~] $\{\nabla_{j}^{l} s_n(\tilde{\theta}; y^n)\}^\tsp v_{\theta
      j}^n = 0$, $l = 0, \dots, k_j - 2$, for every $\tilde{\theta}$ with
      $\tilde{\theta}_j = \theta_j$ and almost every $y^n$;
      \item[(iii)] $k_j(\theta, n)$ is upper bounded by the $k
      = k(\theta)$ in Assumption \ref{assmp:deriv} for every $j$, $\theta$, and $n$.
    \end{enumerate}
  \end{assmp}

  Condition (i) is essentially a more general, multidimensional version of the
  requirement by \citet{Bottai2003} that the second derivative of the
  log-likelihood has positive variance at critical points.
  Indeed, at non-critical $\theta$ (i) holds with $k_j = 1$ for
  all $j$ since $\tilde{s}_n(\theta; Y^n) = [v_{\theta 1}^n , \dots, v_{\theta
  d}^n]^\tsp s_n(\theta; Y^n)$ has diagonal covariance matrix with the
  eigenvalues of $\mathcal{I}_n(\theta)$ on the diagonal. Conversely, at
  critical $\theta$ at least one of those eigenvalues is zero and hence it must
  be that $k_j > 1$ for at least one $j$ in order for (i) to hold. For example,
  we will see mixed models where (i) holds at a critical $\theta$ with
  $v_{\theta 1}^n = e_1 = [1, 0, \dots, 0]^\tsp \in \R{d}$, $k_1= 2$, and $k_j =
  1$, $j = 2, \dots, d$. Then the first element of $\tilde{s}_n(\theta, y^n)$ is
  $\nabla_1^2 \ell(\theta; y^n)$ while the remaining elements are linear
  combinations of the elements of $s_n(\theta; y^n)$. While our theory only
  requires existence, implementing our method in practice can, at least in some
  models, require specification of a set of $k_j$ and numerical computation of a
  set of $v_{\theta j}$ satisfying (i); we discuss this further in Section
  \ref{sec:mixed}.

  Condition (ii) is vacuously satisfied at non-critical points since it only
  applies to $k_j \geq 2$. Consequently, Assumption \ref{assmp:pd} is weaker
  than assuming $\mathcal{I}_n(\theta)$ is positive definite for every $\theta$
  as is common in classical theory. To understand condition (ii) more generally,
  suppose $\theta$ is a critical point, $k_1 = 2$, and $v^n_{\theta 1}$ one of the
  eigenvectors of $\mathcal{I}_n(\theta)$ corresponding to the eigenvalue zero.
  As noted following Assumption \ref{assmp:deriv}, this implies $s_n(\theta;
  y^n)^\tsp v_{\theta 1}^n = 0$ for almost every $y^n$; condition (ii) says the
  same must hold if the score is evaluated at any other $\tilde{\theta}$ with
  first element $\tilde{\theta}_1 = \theta_1$. This motivates the following
  definition.

  \begin{defn}\label{def:crit_elem}
    We say $\theta_j$ is a critical element with corresponding critical
    (eigen-)vector $v$ if $\mathcal{I}_n(\tilde{\theta})v = 0$ at every
    $\tilde{\theta}$ with $\tilde{\theta_j} = \theta_{j}$.
  \end{defn}

  In general, an eigenvector of the Fisher information with vanishing eigenvalue
  need not be a critical vector. For example, if $v_1$ and $v_2$ are critical
  vectors with corresponding critical elements $\theta_1$ and $\theta_{2}$, then
  any linear combination of $v_1$ and $v_2$ is also an eigenvector with
  vanishing eigenvalue at every $\tilde{\theta}$ where both $\tilde{\theta}_1 =
  \theta_1$ and $\tilde{\theta}_2 = \theta_2$; but there is not a corresponding
  critical element.

  The possibility that $k_j > 2$ in condition (ii) means that, if condition (i)
  is not satisfied with $k_j \in \{1, 2\}$ for all $j$, then one may pass to
  higher order derivatives as long as the corresponding ones of lower order
  are zero for almost every $y^n$.

  Finally, we note Assumption \ref{assmp:pd} implies the rank of
  $\mathcal{I}_n(\theta)$ is $d$ minus the number of critical elements and that
  the assumption is sensitive to parameterization. For example,
  the model considered by \citet[Section 3.1]{Azzalini.Capitanio2014} does not
  satisfy Assumption \ref{assmp:pd} in the first parameterization discussed by
  the authors, but can be made to by a simple re-parameterization. Because the
  test statistic in \eqref{eq:teststat} is invariant under differentiable
  re-parameterizations with full rank Jacobian, it suffices to verify the
  conditions in one parameterization for the results to apply more generally.

  \begin{assmp} \label{assmp:dense}
    The non-critical points are dense in $\Theta$; that is, for any $\theta
    \in \Theta$, there exists a sequence $\{\theta_m\} \in \Theta$ of
    non-critical points tending to $\theta$.
  \end{assmp}

  In many settings the sets of critical elements are discrete subsets of $\R{}$.
  Then, if $\theta$ is a critical point with only critical element $\theta_j$,
  it is often possible to verify Assumption \ref{assmp:dense} with, for example,
  $\theta_m = \theta + m^{-1}e_j$.

  \subsection{Continuous extension}

  Our purpose in this section is to prove the following theorem.

  \begin{thm} \label{thm:cont}
    If Assumptions \ref{assmp:support}--\ref{assmp:deriv} and \ref{assmp:pd}--\ref{assmp:dense} hold, then for any
    $n\geq 1$ there is a set $\mathcal{Y}^n$ of full measure such that
    $T_n(\cdot; \cdot)$ has a continuous extension on $\Theta \times
    \mathcal{Y}^n$, and that extension is
    \[
      T_n(\theta; y^n) = \tilde{s}_n(\theta; y^n)
      \tilde{\mathcal{I}}_n(\theta)^{-1}  \tilde{s}_n(\theta;
      y^n).
    \]
  \end{thm}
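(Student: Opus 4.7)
My plan is to show two things: (a) the proposed formula equals $T_n(\theta; y^n)$ wherever the latter is defined (i.e., at non-critical $\theta$), and (b) the formula is jointly continuous on $\Theta \times \mathcal{Y}^n$ for a suitable full-measure set $\mathcal{Y}^n$. Together with Assumption \ref{assmp:dense}, these identify the formula as the unique continuous extension. Step (a) is direct: at a non-critical $\theta$, condition (i) of Assumption \ref{assmp:pd} forces $k_j(\theta, n) = 1$ for every $j$, because otherwise condition (ii) together with the zero-mean property of the score (Lemma D.2 in the Supplementary Material) would yield $\mathcal{I}_n(\theta) v_{\theta j}^n = 0$, contradicting non-criticality. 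With $k_j \equiv 1$, the formula collapses via the spectral decomposition of $\mathcal{I}_n(\theta)$ to $s_n^\tsp \mathcal{I}_n^{-1} s_n = T_n$.

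For (b), the substantive case is joint continuity at a critical $\theta$ along sequences $(\theta_m, y^n_m) \to (\theta, y^n)$; by density of non-critical points and openness of the non-critical set (which follows from continuity of $\theta \mapsto \mathcal{I}_n(\theta)$), I may assume each $\theta_m$ is non-critical and extend to arbitrary approaches by a standard topological argument. Let $\tilde V = [v_{\theta 1}^n, \ldots, v_{\theta d}^n]$; using orthogonality of $\tilde V$,
\[
T_n(\theta_m; y^n_m) = \tilde u_n(\theta_m; y^n_m)^\tsp \tilde D(\theta_m)^{-1} \tilde u_n(\theta_m; y^n_m),
\]
with $\tilde u_n = \tilde V^\tsp s_n$ and $\tilde D(\theta_m) = \tilde V^\tsp \mathcal{I}_n(\theta_m) \tilde V$. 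Set $\delta_m = \theta_m - \theta$ and $D_m = \diag(\delta_{m,j}^{k_j - 1})$; non-criticality of $\theta_m$ combined with condition (ii) forces $\delta_{m,j} \neq 0$ whenever $k_j \geq 2$, so $D_m$ is invertible. Condition (ii) states that $\tilde\theta \mapsto s_n(\tilde\theta; y^n_m)^\tsp v_{\theta j}^n$ and its first $k_j - 2$ partial derivatives in $\tilde\theta_j$ vanish on $\{\tilde\theta_j = \theta_j\}$, so the integral form of Taylor's theorem in the $j$-th coordinate gives
\[
\tilde u_n(\theta_m; y^n_m)_j = \delta_{m,j}^{k_j - 1} \int_0^1 \frac{(1-u)^{k_j - 2}}{(k_j - 2)!} \{\nabla_j^{k_j - 1} s_n(\theta_m^{(j,u)}; y^n_m)\}^\tsp v_{\theta j}^n \, du,
\]
with $\theta_m^{(j,u)}$ agreeing with $\theta_m$ except in the $j$-th coordinate, which equals $\theta_j + u\delta_{m,j}$. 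Joint continuity of derivatives up to order $k(\theta)$ (Assumption \ref{assmp:deriv}, with $k_j \leq k$ by condition (iii)) then yields $D_m^{-1} \tilde u_n(\theta_m; y^n_m) \to F \tilde s_n(\theta; y^n)$, where $F = \diag(1/(k_j - 1)!)$. Applying the same Taylor representation inside each $\tilde D(\theta_m)_{jk}$ and exchanging the limit with the covariance via uniform integrability from the $(2+\delta)$-moment bound, I expect $D_m^{-1} \tilde D(\theta_m) D_m^{-1} \to F \tilde{\mathcal{I}}_n(\theta) F$. Condition (i) makes the limit positive definite, so the $D_m$ factors cancel and
\[
T_n(\theta_m; y^n_m) = (D_m^{-1} \tilde u_n)^\tsp \bigl(D_m^{-1} \tilde D(\theta_m) D_m^{-1}\bigr)^{-1} (D_m^{-1} \tilde u_n) \to \tilde s_n(\theta; y^n)^\tsp \tilde{\mathcal{I}}_n(\theta)^{-1} \tilde s_n(\theta; y^n).
\]

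The main obstacle is the rescaled covariance limit $D_m^{-1} \tilde D(\theta_m) D_m^{-1} \to F \tilde{\mathcal{I}}_n(\theta) F$, since both the Taylor integrands and the sampling measure $\pr_{\theta_m}$ change with $m$. The $(2+\delta)$-moment bound should supply the uniform integrability needed to exchange limit and covariance, but transferring from $\E_{\theta_m}$ to $\E_\theta$ additionally requires convergence of the densities $f_{\theta_m} \to f_\theta$ via a Scheffé- or Vitali-type argument. A secondary concern is constructing $\mathcal{Y}^n$ itself: it should be the countable intersection, over rational $\tilde\theta$ and the finitely many relevant $(k_j, v_{\theta j}^n)$, of the almost-sure sets appearing in condition (ii), together with the joint-continuity sets from Assumption \ref{assmp:deriv}, so that the Taylor expansion above is valid for every $y^n \in \mathcal{Y}^n$ simultaneously.
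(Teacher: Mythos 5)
Your proposal follows essentially the same route as the paper: reduce to sequences of non-critical points via density of such points, Taylor-expand the rotated score in the $j$th coordinate using condition (ii) of Assumption \ref{assmp:pd} to kill the lower-order terms, rescale by $\delta_{m,j}^{k_j-1}$ (the paper's $a_{mj}$, up to the factorial you absorb into $F$), and pass the same rescaling through the covariance using the $(2+\delta)$-moment bound. The one step you leave as ``I expect''---the limit $D_m^{-1}\tilde D(\theta_m)D_m^{-1}\to F\tilde{\mathcal{I}}_n(\theta)F$ under the changing law $\pr_{\theta_m}$---is exactly where the paper works, and it closes it with the tools you name (Scheff\'e for convergence of the laws, plus Skorokhod's representation to apply the pathwise Taylor limit, plus uniform integrability via Cauchy--Schwarz), so your outline is correct and matches the paper's argument.
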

  Here and in what follows, we use the same notation for the test statistic in
  \eqref{eq:teststat} and its continuous extension given by Theorem
  \ref{thm:cont}. The expressions in Theorem \ref{thm:cont} and
  \eqref{eq:teststat} agree at non-critical $\theta$ since $\tilde{s}_n(\theta;
  y^n) = [v_{\theta 1}^n, \dots, v_{\theta d}^n]^\tsp s_n(\theta; y^n)$ there,
  and the pre-mulitplication of any invertible matrix is canceled when
  standardizing by the covariance matrix. Observe the continuity in Theorem
  \ref{thm:cont} is jointly in the parameter and data.  Before giving a proof,
  we state and discuss some intermediate results used in that proof. The first
  is a lemma which, loosely speaking, says that if the non-critical points are
  dense, then it is enough to establish continuity along sequences of
  non-critical points for it to hold more generally. Proofs of formally stated
  results are in the Supplementary Material if not
  given here.

  \begin{lem} \label{lem:noncrit_to_crit}
    Suppose Assumption \ref{assmp:dense} holds and that, for every $\theta \in
    \Theta$ and $y^n$ in some $\mathcal{Y}^n \subseteq \R{r_1} \times \cdots
    \times \R{r_n}$, with $n \geq 1$ fixed, $\lim_{m\to \infty} T_n(\theta_m;
    y_m^n)$ exists and is the same for all sequences $\{\theta_m\} \in \Theta$
    of non-critical points tending to $\theta$ and sequences $\{y^n_m\} \in
    \mathcal{Y}^n$ tending to $y^n$; then $T_n(\cdot; \cdot)$ has a continuous
    extension on $\Theta \times \mathcal{Y}^n$.
  \end{lem}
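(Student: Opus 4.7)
The plan is to define the extension explicitly by the common limit provided by the hypothesis and then verify joint continuity by a diagonal-sequence argument, where density of non-critical points lets us replace critical parameter values by nearby non-critical ones without changing the value of the extended function much.

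Concretely, I would first define $\bar{T}_n : \Theta \times \mathcal{Y}^n \to \mathbb{R}$ by setting $\bar{T}_n(\theta; y^n) := \lim_{m \to \infty} T_n(\theta_m; y^n_m)$ for any sequence $\{\theta_m\} \subseteq \Theta$ of non-critical points with $\theta_m \to \theta$ and any sequence $\{y^n_m\} \subseteq \mathcal{Y}^n$ with $y^n_m \to y^n$. The hypothesis guarantees that this limit exists and is independent of the sequences, so $\bar{T}_n$ is well defined on $\Theta \times \mathcal{Y}^n$. Moreover, Assumption \ref{assmp:dense} guarantees at least one such $\theta_m$-sequence exists for every $\theta \in \Theta$, and one can always take $y^n_m = y^n$ constant (it is in $\mathcal{Y}^n$). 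At any non-critical $\theta$, taking the constant sequence $\theta_m = \theta$ shows $\bar{T}_n$ agrees with the original $T_n$ in \eqref{eq:teststat}, so $\bar{T}_n$ is an extension of $T_n$.

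Next I would show $\bar{T}_n$ is jointly continuous. Fix $(\theta, y^n) \in \Theta \times \mathcal{Y}^n$ and any sequence $(\theta^{(k)}, y^{n,(k)}) \in \Theta \times \mathcal{Y}^n$ with $(\theta^{(k)}, y^{n,(k)}) \to (\theta, y^n)$. For each $k$, I would use the definition of $\bar{T}_n(\theta^{(k)}; y^{n,(k)})$ as a limit along non-critical parameter sequences (together with Assumption \ref{assmp:dense}) to choose a non-critical $\tilde{\theta}^{(k)} \in \Theta$ such that
\[
  \| \tilde{\theta}^{(k)} - \theta^{(k)}\| < 1/k \quad \text{and} \quad \bigl\vert T_n(\tilde{\theta}^{(k)}; y^{n,(k)}) - \bar{T}_n(\theta^{(k)}; y^{n,(k)}) \bigr\vert < 1/k;
\]
if $\theta^{(k)}$ is already non-critical, take $\tilde{\theta}^{(k)} = \theta^{(k)}$. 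Then $\tilde{\theta}^{(k)} \to \theta$ along non-critical points and $y^{n,(k)} \to y^n$ within $\mathcal{Y}^n$, so by hypothesis $T_n(\tilde{\theta}^{(k)}; y^{n,(k)}) \to \bar{T}_n(\theta; y^n)$. Combining this with the two inequalities above yields $\bar{T}_n(\theta^{(k)}; y^{n,(k)}) \to \bar{T}_n(\theta; y^n)$, which is the desired continuity.

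The main (and essentially only) obstacle is the diagonal step: justifying that for each $k$ one can simultaneously achieve $\|\tilde{\theta}^{(k)} - \theta^{(k)}\|$ small and $|T_n(\tilde{\theta}^{(k)}; y^{n,(k)}) - \bar{T}_n(\theta^{(k)}; y^{n,(k)})|$ small with $\tilde{\theta}^{(k)}$ non-critical. This is the one place the hypothesis and Assumption \ref{assmp:dense} are both genuinely used: density supplies non-critical points arbitrarily near $\theta^{(k)}$, while the sequence-independence of the limit defining $\bar{T}_n(\theta^{(k)}; y^{n,(k)})$ ensures that any such nearby non-critical evaluation of $T_n$ (with the $y$-argument held at the already admissible point $y^{n,(k)}$) is close to $\bar{T}_n(\theta^{(k)}; y^{n,(k)})$. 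Everything else is bookkeeping with the triangle inequality.
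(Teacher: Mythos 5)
Your proof is correct: defining the extension as the common sequential limit (well-defined by the hypothesis, nonempty by Assumption \ref{assmp:dense}, agreeing with $T_n$ at non-critical points via constant sequences) and then verifying joint continuity on $\Theta \times \mathcal{Y}^n$ by the diagonal selection of nearby non-critical points is exactly the standard argument, and it matches the approach of the paper's proof of Lemma \ref{lem:noncrit_to_crit}. The key step — choosing $\tilde{\theta}^{(k)}$ non-critical with both $\Vert\tilde{\theta}^{(k)}-\theta^{(k)}\Vert<1/k$ and $\vert T_n(\tilde{\theta}^{(k)}; y^{n,(k)})-\bar{T}_n(\theta^{(k)}; y^{n,(k)})\vert<1/k$, using the constant $y$-sequence at $y^{n,(k)}$ — is properly justified, so there is no gap.
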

  In order to use Lemma \ref{lem:noncrit_to_crit} to prove Theorem \ref{thm:cont},
  one must show that $T_n$ converges along sequences of non-critical points
  tending to critical points. Evaluating the score at such sequences gives a
  sequence of score vectors whose covariance matrices are non-singular but tend to
  a singular limit. The following lemma says those score vectors can be
  scaled to tend to a limit with positive definite covariance matrix.

  \begin{lem} \label{lem:scale_score_lim}
    Suppose Assumptions \ref{assmp:support}, \ref{assmp:deriv}, and
    \ref{assmp:pd} hold and let $\{\theta_m\} \in \Theta$ be a sequence of
    non-critical points tending to some $\theta \in \Theta$. Then there exist
    sequences of non-zero constants $\{a_{mj}\}$, $j = 1, \dots, d$, and a set
    $\mathcal{Y}^n$ of full measure such that, for any $\{y_m^n\} \in
    \mathcal{Y}^n$ tending to a $y^n\in \mathcal{Y}^n$ as $m\to\infty$,
    \[
    a_{mj} s_n(\theta_m; y^n_m)^\tsp v_{\theta j}^n \to \tilde{s}_{nj}(\theta;
    y^n),
    \]
    where $\tilde{s}_{nj}$ is the $j$th element of $\tilde{s}_n$ in Assumption
    \ref{assmp:pd}.
  \end{lem}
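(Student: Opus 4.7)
The plan is to expand the scalar function $g_j^m : x \mapsto s_n(\theta_m^{(x, j)}; y_m^n)^\tsp v_{\theta j}^n$ around $x = \theta_j$, where $\theta_m^{(x, j)}$ denotes $\theta_m$ with its $j$-th coordinate replaced by $x$. Assumption \ref{assmp:pd}(ii) makes the first $k_j - 1$ derivatives of $g_j^m$ vanish at $x = \theta_j$, so by Taylor's theorem its value at $x = \theta_{m, j}$ is of order $(\theta_{m, j} - \theta_j)^{k_j - 1}$; multiplying by the reciprocal of that leading factor will produce $\tilde{s}_{nj}(\theta; y^n)$ in the limit.

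The first step is constructing the full-measure set $\mathcal{Y}^n$. Assumption \ref{assmp:pd}(ii) provides a null set of $y^n$ for each fixed $\tilde{\theta}$ with $\tilde{\theta}_j = \theta_j$; using the joint continuity of derivatives from Assumption \ref{assmp:deriv}, I would take a countable dense subset of such $\tilde{\theta}$ in a neighborhood of $\theta$, form the union of the corresponding null sets, and then use continuity of $\tilde{\theta} \mapsto \{\nabla_j^l s_n(\tilde{\theta}; y^n)\}^\tsp v_{\theta j}^n$ at every $y^n$ in the complement to upgrade vanishing to all relevant $\tilde{\theta}$ simultaneously. If $k_j = 1$, then $\tilde{s}_{nj}(\theta; y^n) = s_n(\theta; y^n)^\tsp v_{\theta j}^n$, so I can take $a_{mj} = 1$ and conclude by joint continuity alone.

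For $k_j \geq 2$, I would first verify $\theta_{m, j} \neq \theta_j$. The $l = 0$ case of Assumption \ref{assmp:pd}(ii) gives $s_n(\tilde{\theta}; y^n)^\tsp v_{\theta j}^n = 0$ for almost every $y^n$ at every $\tilde{\theta}$ with $\tilde{\theta}_j = \theta_j$, which, as noted following Assumption \ref{assmp:deriv}, is equivalent to $\mathcal{I}_n(\tilde{\theta}) v_{\theta j}^n = 0$ there; hence $\theta_j$ is a critical element with critical vector $v_{\theta j}^n$ (Definition \ref{def:crit_elem}), and $\theta_{m, j} = \theta_j$ would contradict non-criticality of $\theta_m$. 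Setting $a_{mj} := (k_j - 1)! / (\theta_{m, j} - \theta_j)^{k_j - 1}$, Taylor's theorem with Lagrange remainder, justified by Assumption \ref{assmp:deriv} and condition (iii) of Assumption \ref{assmp:pd}, yields
\[
s_n(\theta_m; y_m^n)^\tsp v_{\theta j}^n = \frac{(\theta_{m, j} - \theta_j)^{k_j - 1}}{(k_j - 1)!} \{\nabla_j^{k_j - 1} s_n(\theta_m^{(\xi_m, j)}; y_m^n)\}^\tsp v_{\theta j}^n
\]
for some $\xi_m$ between $\theta_j$ and $\theta_{m, j}$. Since $\theta_m^{(\xi_m, j)} \to \theta$ and $y_m^n \to y^n$, joint continuity gives the required limit $\tilde{s}_{nj}(\theta; y^n)$. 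The main obstacle is the uniformity of the null set in $\tilde{\theta}$; once resolved by the continuity argument above, the rest is a one-variable Taylor expansion whose leading coefficient is dictated by Assumption \ref{assmp:pd}(ii).
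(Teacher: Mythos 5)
Your proposal is correct and follows essentially the same route as the paper: a one-variable Taylor expansion with Lagrange remainder in the $j$th coordinate, the first $k_j-1$ terms killed by Assumption \ref{assmp:pd}(ii), the choice $a_{mj}=(k_j-1)!/(\theta_{mj}-\theta_j)^{k_j-1}$ justified by noting non-criticality of $\theta_m$ forces $\theta_{mj}\neq\theta_j$, and joint continuity of the derivatives to pass to the limit. Your extra construction of $\mathcal{Y}^n$ via a countable dense set of $\tilde{\theta}$ plus continuity is a careful reading of the ``almost every $y^n$'' quantifier in Assumption \ref{assmp:pd}(ii); the paper simply takes the full-measure set supplied by that assumption, so this is a harmless refinement rather than a different argument.
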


  \begin{proof}
    Let $\mathcal{Y}^n$ be the intersection of the sets of full measure in
    Assumptions \ref{assmp:deriv} and \ref{assmp:pd}. Since the limit point
    $\theta$ and $n$ are fixed, denote $v_j = v_{\theta j}^n$ for simplicity.
    For an arbitrary $j$ and all large enough $m$, condition (ii) of Assumption
    \ref{assmp:deriv} lets us apply Taylor's theorem with Lagrange-form
    remainder to the map $\theta_{mj} \mapsto v_{j}^\tsp s_n(\theta_m; y^n_m)$
    to get that $v_{j}^\tsp s_n(\theta_m; y^n_m)$ is equal to
    \[
      \sum_{l = 0}^{k_j - 2} \frac{(\theta_{mj} - \theta_{j})^l v_{j}^\tsp
      \{\nabla^l_{j} s_n(\theta_m^{(j)}; y^n_m) \}}{l!} + \frac{(\theta_{mj} -
      \theta_{j})^{k_j - 1} v_{j}^\tsp \{\nabla_j^{k_j - 1}
      s_n(\tilde{\theta}_m^{(j)}; y^n_m)\} }{(k_j - 1)!},
    \]
    where $\theta_m^{(j)}$ and $\tilde{\theta}_m^{(j)}$ are $\theta_m$ with
    $\theta_{mj}$ replaced by, respectively, $\theta_{j}$ and a point between
    $\theta_{mj}$ and $\theta_{j}$; and $k_j$ is selected in accordance with
    Assumption \ref{assmp:pd} at $\theta$. By condition (ii) of that assumption,
    the first $k_j-1$ terms in the last display vanish. Now set $a_{mj} = (k_j -
    1)! / (\theta_{mj} - \theta_{j})^{k_j - 1}$, which is always defined since
    $\theta_{m}$ is non-critical and hence $\theta_{mj}\neq \theta_j$ by
    condition (ii) of Assumption \ref{assmp:pd}. Then $a_{mj} v_j^\tsp
    s_n(\theta_m; y_m^n) = v_{j}^\tsp \{\nabla_j^{k_j - 1}
    s_n(\tilde{\theta}_m^{(j)}; y^n_m)\}$, which has the desired limit by
    continuity of partial derivatives given by Assumption \ref{assmp:deriv}.
  \end{proof}

  The importance of controlling the behavior of the eigenvectors of the Fisher
  information near critical points is highlighted by the proof of Lemma
  \ref{lem:scale_score_lim}: the first $k_j - 1$ terms in the Taylor expansions
  need not vanish if the eigenvectors depend on $\theta$ in a way violating
  condition (ii) of Assumption \ref{assmp:pd}. We are ready to prove Theorem
  \ref{thm:cont}.

\begin{proof}[Proof of Theorem \ref{thm:cont}]
  Fix $n$, $\theta \in \Theta$, and $y^n \in \mathcal{Y}^n$, where
  $\mathcal{Y}^n$ is the intersection of the sets of full measure given by
  Assumptions \ref{assmp:deriv} and \ref{assmp:pd}.
  Pick $V_n = [v_{\theta 1}^n, \dots, v_{\theta d}^n] \in \R{d\times d}$
  satisfying Assumption \ref{assmp:pd}. By Lemma \ref{lem:noncrit_to_crit}, it
  suffices to show
  \[
    \lim_{m\to \infty} s_n(\theta_m; y_m^n)^\tsp
    \mathcal{I}(\theta_m)^{-1}s_n(\theta_m; y_m^n) = \tilde{s}(\theta;
    y^n)^\tsp\tilde{\mathcal{I}}_n(\theta)^{-1}\tilde{s}(\theta; y^n)
  \]
  for an arbitrary sequence of non-critical points $\{\theta_m\}$ tending to
  $\theta$ and $\{y_m^n\}$ tending to $y^n$. We note it is enough to establish
  the limit for any one $V_n$ satisfying Assumption \ref{assmp:pd} since the
  sequence does not depend on the choice and, hence, neither can the limit. Let
  $A_m = \diag(a_{m1}, \dots, a_{md})$ be defined by the sequences of constants
  given by Lemma \ref{lem:scale_score_lim}. Then $A_m V_n^\tsp$ is invertible and
  consequently
  \[
    T_n(\theta_m; y^n_m) = \{A_m V_n^\tsp s_n(\theta_m; y^n_{m})\}^\tsp \{A_m
    V_n^\tsp \mathcal{I}_n(\theta_m)A_m V_n \}^{-1} A_m V_n^\tsp s_n(\theta_m;
    y^n_{m}).
  \]
  Lemma \ref{lem:scale_score_lim} says, for the $\tilde{s}_n$ in Assumption
  \ref{assmp:pd}, $ A_m V_n^\tsp s_n(\theta_m, y_{m}^n) \to \tilde{s}_n(\theta,
  y^n)$. Thus, we are done if we can show $A_m V_n^\tsp \mathcal{I}(\theta_m) V_n
  A_m  = \cov \{A_m V_n^\tsp s(\theta_m, Y_m^n)\} \to \tilde{\mathcal{I}}_n(\theta)$,
  where $Y^n_m = (Y_{m1}, \dots, Y_{mn})$ has the distribution indexed by
  $\theta_m$. To that end, note $\theta_m \to \theta$ implies $f^i_{\theta_m}
  \to f^i_{\theta}$ pointwise in $y_i$ for every $i$ by continuity implied by
  Assumption \ref{assmp:deriv}. Hence, the joint density for $Y^n_m$ tends
  pointwise to that of a $Y^n$ with distribution indexed by $\theta$. Thus,
  $Y^n_m \to Y^n$ in total variation by Scheffe's theorem \citep[Theorem
  16.12]{Billingsley1995}, and hence also in distribution. To show the desired
  convergence of covariance matrices, we may thus assume, by Skorokhod's
  representation theorem \citep[Theorem 6.7]{Billingsley1999}, that $Y^n_m \to
  Y^n$ almost surely. But then $A_m V_n^\tsp s_n(\theta_m; Y^n_m) \to
  \tilde{s}(\theta, Y^n)$ almost surely by Lemma \ref{lem:scale_score_lim}. Now
  convergence of the covariance matrices follows if the elements of the sequence
  $\{A_m V_n^\tsp s_n(\theta_m; Y^n_m) s_n(\theta_m; Y^n_m)^\tsp V_n A_m\}$ are
  uniformly integrable \citep[Theorem 25.12]{Billingsley1995}. To show they are,
  note the $j$th element of $A_m V_n^\tsp s_n(\theta_m, Y^n_m)$ is
  $\{\nabla_j^{k_j - 1} s_n(\tilde{\theta}_m^{(j)}; Y^n_m)\}^\tsp v_{\theta j}$,
  with $\tilde{\theta}_m^{(j)}$ selected as in Lemma \ref{lem:scale_score_lim}.
  Thus, the $(j, l)$th element of $A_m V_n^\tsp s_n(\theta_m; Y^n_m) s_n(\theta_m;
  Y^n_m)^\tsp V A_m$ is
  \begin{align*}
    \{\nabla_j^{k_j - 1}
    s_n(\tilde{\theta}_m^{(j)}; Y^n_m)\}^\tsp v_{\theta j} \{\nabla_l^{k_l - 1}
    s_n(\tilde{\theta}_m^{(l)}; Y^n_m)\}^\tsp v_{\theta l},
  \end{align*}
  which has uniformly bounded $(1 + \delta/2)$th moment by the Cauchy--Schwarz
  inequality and Assumption \ref{assmp:deriv}. From this the desired uniform
  integrability follows \citep[25.13]{Billingsley1995} and that completes the
  proof.
\end{proof}

\subsection{Asymptotic uniform coverage probability}

We now turn to confidence regions obtained by inverting the continuous extension
$T_n$. Specifically, for $\alpha \in (0, 1)$, define
\begin{equation} \label{eq:confreg}
  \mathcal{R}_n(\alpha) = \{\theta \in \Theta: T_n(\theta; Y^n) \leq q_{d, 1 -
  \alpha}\},
\end{equation}
where $q_{d, 1 - \alpha}$ is the $(1-\alpha)$th quantile of the chi-square
distribution with $d$ degrees of freedom. We have the following main result of
the section.

\begin{thm}\label{thm:main}
  Under Assumptions \ref{assmp:support}--\ref{assmp:dense}, the confidence
  region $\mathcal{R}_n(\alpha)$ in \eqref{eq:confreg} has asymptotically
  correct uniform coverage probability on compact sets; that is, it satisfies
  \eqref{eq:unif:cover}.
\end{thm}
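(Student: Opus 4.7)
The goal is to prove uniform weak convergence $T_n(\theta; Y^n) \Rightarrow \chi^2_d$ under $\pr_\theta$ over $\theta$ in compact $C \subseteq \Theta$; since the $\chi^2_d$ distribution function is continuous at $q_{d, 1-\alpha}$, this uniform convergence in distribution is equivalent to \eqref{eq:unif:cover}. Throughout I would work with the representation $T_n(\theta; Y^n) = \tilde{s}_n(\theta; Y^n)^\tsp \tilde{\mathcal{I}}_n(\theta)^{-1} \tilde{s}_n(\theta; Y^n)$ furnished by Theorem \ref{thm:cont}, reducing the task to proving $\tilde{\mathcal{I}}_n(\theta)^{-1/2} \tilde{s}_n(\theta; Y^n) \Rightarrow \mathcal{N}(0, I_d)$.

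For the pointwise version of this multivariate CLT, I would apply the Lindeberg--Feller theorem to the independent summands $\tilde{s}_n^i(\theta; Y_i)$. The aggregate mean-zero property $\E_\theta\{\tilde{s}_n(\theta; Y^n)\} = 0$ holds at non-critical points by the first Bartlett identity (applied to the rotated score, where $k_j = 1$); at critical points it follows from an iterated Bartlett argument. Concretely, $\E_{\theta + \epsilon e_j}\{v_{\theta j}^{n\tsp} s_n(\theta + \epsilon e_j; Y^n)\} = 0$; Taylor expanding $v_{\theta j}^{n\tsp} s_n(\theta + \epsilon e_j; y^n)$ in $\epsilon$, the first $k_j - 1$ coefficients vanish by condition (ii) of Assumption \ref{assmp:pd}, leaving a leading term proportional to $v_{\theta j}^{n\tsp}\nabla_j^{k_j - 1} s_n(\theta; y^n)$; dividing by $\epsilon^{k_j - 1}$ and letting $\epsilon \to 0$, with uniform integrability from \eqref{eq:deriv_cond}, yields $\E_\theta\{v_{\theta j}^{n\tsp}\nabla_j^{k_j - 1} s_n\} = 0$. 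The Lyapunov condition for exponent $2 + \delta$ then follows from \eqref{eq:deriv_cond} controlling numerator moments and Assumption \ref{assmp:loew_order} forcing $\tilde{\mathcal{I}}_n(\theta)$ to grow linearly in $n$ with smallest eigenvalue bounded away from zero, so $T_n(\theta; Y^n) \Rightarrow \chi^2_d$ pointwise.

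To upgrade to uniformity on compact $C$, I would argue by contradiction. If \eqref{eq:unif:cover} failed, there would exist $\epsilon > 0$, indices $n_k \to \infty$, and $\theta_{n_k} \in C$ with $|\pr_{\theta_{n_k}}\{T_{n_k}(\theta_{n_k}; Y^{n_k}) \leq q_{d, 1-\alpha}\} - (1-\alpha)| \geq \epsilon$. Compactness of $C$ yields a further subsequence with $\theta_{n_k} \to \theta^\star \in C$, reducing matters to the triangular-array CLT $T_{n_k}(\theta_{n_k}; Y^{n_k}) \Rightarrow \chi^2_d$ under the drifting law $\pr_{\theta_{n_k}}$. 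Following the strategy of Lemma \ref{lem:scale_score_lim} and the proof of Theorem \ref{thm:cont}, I would fix the eigenvectors $v_{\theta^\star j}^{n_k}$ and indices $k_j(\theta^\star, n_k)$ at $\theta^\star$, rescale the corresponding coordinates of $s_{n_k}(\theta_{n_k}; \cdot)$ by $a_{n_k, j} = (k_j - 1)!/(\theta_{n_k, j} - \theta^\star_j)^{k_j - 1}$, exploit the invariance of $T_n$ under invertible changes of basis at non-critical points, and combine Scheffe--Skorokhod convergence of the $Y^{n_k}$-laws with Lindeberg--Feller applied to the rescaled array.

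The principal obstacle is this drift CLT: one must verify the Lindeberg condition uniformly as both the sampling distribution $\pr_{\theta_{n_k}}$ and the standardizing matrix $a_{n_k, j}^2 v^\tsp \mathcal{I}_{n_k}(\theta_{n_k}) v$ move, and must ensure the rescaled covariance remains invertible with limit $\tilde{\mathcal{I}}_{n_k}(\theta^\star)$ even though the underlying Fisher information is degenerating. Equicontinuity of higher-order score moments (from Assumption \ref{assmp:deriv}) and the Loewner bound (Assumption \ref{assmp:loew_order}) are the essential tools, but the bookkeeping involved, in particular handling sequences that cross freely between critical and non-critical regions of $\Theta$, is the technical heart of the proof.
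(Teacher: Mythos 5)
Your outline tracks the paper's strategy at a high level: reduce uniform coverage on compacts to convergence in distribution under drifting parameters (the content of the paper's Lemma \ref{lem:cover_to_seq}), rescale the score using eigenvectors fixed at the limit point as in Lemma \ref{lem:scale_score_lim}, and prove a triangular-array CLT with moments controlled by \eqref{eq:deriv_cond} and Assumption \ref{assmp:loew_order}. But the two items you flag as obstacles and leave open are precisely the substance of the paper's proof, so there are genuine gaps. First, sequences containing critical points: your scaling $a_{n_k,j}=(k_j-1)!/(\theta_{n_k,j}-\theta^\star_j)^{k_j-1}$ is undefined whenever $\theta_{n_k,j}=\theta^\star_j$ with $k_j\geq 2$, and you supply no device for excluding such terms. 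The paper resolves this with Lemma \ref{lem:noncrit_to_crit_2}: it suffices to establish \eqref{eq:cont:conv:dist} along sequences of non-critical points, because for each fixed $n$ the continuous extension of Theorem \ref{thm:cont} yields $T_n(\theta_n^m;Y^n_m)\rightsquigarrow T_n(\theta_n;Y^n_n)$ as $m\to\infty$, and a diagonal argument (avoiding the countably many discontinuity points of the distribution functions) transfers the $\chi^2_d$ limit to arbitrary convergent sequences. Second, the drift CLT itself: the hard step is not bookkeeping but showing the rescaled per-observation variance does not degenerate, i.e.\ $\liminf_{n}\emin\{A_nV_n^\tsp\mathcal{I}^1(\theta_n)V_nA_n\}>0$, so that $\var(t^\tsp U_n)\geq \epsilon n$. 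The paper proves this by contradiction: extract a subsequence along which the non-critical eigenvector block converges (compactness of semi-orthogonal matrices), use Scheffe/Skorokhod plus the uniform integrability from \eqref{eq:deriv_cond} to show $A_nV_n^\tsp\mathcal{I}^1(\theta_n)V_nA_n$ converges to the positive definite covariance of the modified score for one observation, and conclude with Weyl's inequalities; the Lyapunov $(2+\delta)$-moment bound then falls out of the Lagrange-form Taylor representation of each rescaled coordinate. Without some version of this argument your Lindeberg condition cannot be verified.

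Two further points. Your claim that Assumption \ref{assmp:loew_order} forces $\tilde{\mathcal{I}}_n(\theta)$ to have smallest eigenvalue growing linearly in $n$ at a critical $\theta$ is unjustified: that assumption orders the ordinary informations $\mathcal{I}^i$, not covariances of higher-order derivatives, and Assumption \ref{assmp:pd}(i) gives positive definiteness only for each fixed $n$. The paper never needs such a bound at critical points because, after Lemma \ref{lem:noncrit_to_crit_2}, all work is done at non-critical $\theta_n$, where the Loewner bound applies directly to the rescaled ordinary information; your separate pointwise CLT at critical points (including the iterated Bartlett identity for the mean of the modified score) becomes unnecessary once the reduction is in place, since constant sequences are a special case. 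Finally, passing from the one-dimensional limits $t^\tsp U_n/\sigma_{nt}\rightsquigarrow\rN(0,1)$ to $\cov(U_n)^{-1/2}U_n\rightsquigarrow\rN(0,I_d)$ is not automatic when the standardizing matrices vary with $n$; the paper invokes a dedicated result (Biscio et al.) whose hypotheses again require the eigenvalue control of $n^{-1}\cov(U_n)$ supplied by Assumption \ref{assmp:loew_order}, a step absent from your sketch.
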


We need some intermediate results before proving Theorem \ref{thm:main}. Our
strategy will be to prove that, for any compact $C \subseteq \Theta$ and $\alpha
\in (0, 1)$,
\begin{equation} \label{eq:unif:cover:exact}
  \lim_{n\to \infty} \sup_{\theta \in C}\left\vert \pr_\theta\{\theta \in \mathcal{R}_n(\alpha)\} - (1 - \alpha)\right\vert = 0,
\end{equation}
which implies \eqref{eq:unif:cover}. The following two lemmas let us focus on
convergence along sequence of non-critical points as in the previous section,
but now taking the stochastic properties of the data into account.

\begin{lem}\label{lem:cover_to_seq}
  Equation \eqref{eq:unif:cover:exact} holds for every compact
  $C\subseteq\Theta$ if and only if, for every convergent sequence
  $\{\theta_n\} \in \Theta$ as $n\to \infty$,
  \begin{equation}\label{eq:cont:conv:dist}
    T_n(\theta_n; Y^n_n) \rightsquigarrow \chi^2_d,
  \end{equation}
  where $Y_n^n = (Y_{n 1}, \dots, Y_{n n})$ has the distribution indexed by
  $\theta_n$.
\end{lem}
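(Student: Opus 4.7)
The plan is to establish both directions by standard subsequence/compactness arguments, using the fact that $T_n$ is non-negative and $\chi^2_d$ has a continuous cumulative distribution function supported on $[0,\infty)$. Because of the latter, convergence in distribution to $\chi^2_d$ is equivalent to pointwise convergence of the cumulative distribution function of $T_n(\theta_n; Y^n_n)$ at every $x > 0$, and the quantiles $\{q_{d, 1- \alpha}: \alpha \in (0,1)\}$ exhaust $(0, \infty)$ as $\alpha$ varies. Hence testing coverage at $q_{d, 1- \alpha}$ for all $\alpha$ is equivalent to a distributional statement at the level of cumulative distribution functions.

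For the forward direction, I would take an arbitrary convergent sequence $\{\theta_n\} \in \Theta$ with limit $\theta^*$ and form the set $C = \{\theta^*\} \cup \{\theta_n : n \geq 1\}$, which is a compact subset of $\Theta$. Applying \eqref{eq:unif:cover:exact} with this $C$ and an arbitrary $\alpha \in (0,1)$ yields in particular $\pr_{\theta_n}\{T_n(\theta_n; Y^n_n) \leq q_{d,1-\alpha}\} \to 1 - \alpha$. Since $\alpha$ was arbitrary, this gives pointwise convergence of the cumulative distribution function at every $x$ of the form $q_{d,1-\alpha}$, hence at every $x > 0$; convergence at $x \leq 0$ is automatic because both distributions are supported on $[0,\infty)$ and the limit is continuous at zero. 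Convergence in distribution \eqref{eq:cont:conv:dist} follows.

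For the backward direction I would argue by contradiction. Suppose \eqref{eq:unif:cover:exact} fails for some compact $C \subseteq \Theta$ and some $\alpha \in (0,1)$. Then there exist $\epsilon > 0$, a strictly increasing sequence of indices $n_k$, and points $\theta_{n_k} \in C$ such that $|\pr_{\theta_{n_k}}\{\theta_{n_k} \in \mathcal{R}_{n_k}(\alpha)\} - (1-\alpha)| > \epsilon$ for all $k$. By compactness of $C$, a further subsequence of $\{\theta_{n_k}\}$ converges to some $\theta^* \in C$; without loss of generality, assume $\theta_{n_k} \to \theta^*$ itself. Now define a full sequence $\{\theta_n\} \in \Theta$ by setting $\theta_n = \theta_{n_k}$ when $n = n_k$ and $\theta_n = \theta^*$ otherwise; then $\theta_n \to \theta^*$. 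The hypothesis gives $T_n(\theta_n; Y^n_n) \rightsquigarrow \chi^2_d$, so since $q_{d, 1-\alpha}$ is a continuity point of the limit distribution, $\pr_{\theta_n}\{T_n(\theta_n; Y^n_n) \leq q_{d, 1-\alpha}\} \to 1 - \alpha$. Restricted to the subsequence $n = n_k$ this contradicts the choice of $\theta_{n_k}$, completing the argument.

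The arguments are essentially standard, and no single step stands out as a serious obstacle. The only small subtlety is the interplay between the quantification over $\alpha$ implicit in the left-hand side and the distribution-level statement on the right-hand side, which I resolve by exploiting the continuity and full-support-on-$(0,\infty)$ of the $\chi^2_d$ quantile function. The subsequence-to-full-sequence extension in the backward direction is legitimate because the hypothesis puts no constraint on individual $\theta_n$ beyond belonging to a convergent sequence in $\Theta$, so any off-subsequence filling (e.g., by the common limit) is allowed.
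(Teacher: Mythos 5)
Your proof is correct and follows essentially the same route as the paper's: it is the standard equivalence between continuous convergence and uniform convergence on compact sets applied to the coverage functions $\theta \mapsto \pr_\theta\{\theta \in \mathcal{R}_n(\alpha)\}$ (compactness of $\{\theta^*\}\cup\{\theta_n\}$ in one direction, subsequence extraction plus filling in the sequence in the other), combined with the translation between convergence of coverage at the quantiles $q_{d,1-\alpha}$ for all $\alpha$ and weak convergence to $\chi^2_d$ via continuity of its distribution function. The only point worth a remark is the continuity point $x=0$ of the limit law, which your dense-set/monotonicity observation handles adequately.
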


The proof of Lemma \ref{lem:cover_to_seq} essentially amounts to showing
continuous convergence is equivalent to uniform convergence on compact sets, the
function of interest being $\theta \mapsto \pr_\theta\{\theta \in
\mathcal{R}_n(\alpha)\}$. Lemma \ref{lem:cover_to_seq}
suggests, roughly speaking, that to get reliable confidence regions the
distribution of the test statistic should be the same regardless of how close to
the critical point the true parameter is.

The next lemma says it suffices to consider sequences of non-critical points.
Specifically, we need not treat the asymptotic distribution at critical
points separately.
\begin{lem}\label{lem:noncrit_to_crit_2}
  If Assumptions \ref{assmp:support}--\ref{assmp:dense} hold and
  \eqref{eq:cont:conv:dist} holds for every convergent sequence $\{\theta_n\}
  \in \Theta$ of non-critical points, then \eqref{eq:cont:conv:dist} holds for
  any convergent sequence in $\Theta$.
\end{lem}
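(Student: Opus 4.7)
The plan is to prove $T_n(\theta_n; Y_n^n) \rightsquigarrow \chi_d^2$ for an arbitrary convergent sequence $\{\theta_n\} \in \Theta$ with limit $\theta^\star \in \Theta$ by a diagonal approximation with non-critical points. For each $t > 0$ and $n \geq 1$ let $G_n(\theta) = \pr_\theta\{T_n(\theta; Y^n) \leq t\}$. It would be enough to show $G_n(\theta_n) \to \pr\{\chi_d^2 \leq t\}$ for every $t$ in a dense subset of $(0, \infty)$, since the $\chi_d^2$ law is continuous. The bridge is as follows: for each $n$, by Assumption \ref{assmp:dense}, I would select a non-critical $\theta_n^\star \in \Theta$ within $1/n$ of $\theta_n$ and (using continuity developed next) close enough that $|G_n(\theta_n^\star) - G_n(\theta_n)| \leq 1/n$. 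Then $\theta_n^\star \to \theta^\star$ along non-critical points, the hypothesis of the lemma yields $G_n(\theta_n^\star) \to \pr\{\chi_d^2 \leq t\}$, and a triangle inequality completes the argument.

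The crux is showing that, for each fixed $n$ and each $t$ that is not an atom of the distribution of $T_n(\theta_n; Y^n)$ under $\pr_{\theta_n}$, the map $\theta \mapsto G_n(\theta)$ is continuous at $\theta_n$. For $\theta' \in \Theta$ I would use the decomposition
\begin{align*}
  G_n(\theta') - G_n(\theta_n) &= \int \I\{T_n(\theta'; y^n) \leq t\}\{f_{\theta'}(y^n) - f_{\theta_n}(y^n)\}\,\gamma^n(\dd y^n) \\
  &\quad + \int [\I\{T_n(\theta'; y^n) \leq t\} - \I\{T_n(\theta_n; y^n) \leq t\}]\,f_{\theta_n}(y^n)\,\gamma^n(\dd y^n),
\end{align*}
where $\gamma^n = \gamma_1 \times \cdots \times \gamma_n$. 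As $\theta' \to \theta_n$, the first integral is bounded in absolute value by $\int |f_{\theta'} - f_{\theta_n}|\,\dd \gamma^n$, which vanishes by Scheffe's theorem together with pointwise convergence of the densities, the latter a consequence of continuity implicit in Assumption \ref{assmp:deriv}. For the second, the joint continuity of $T_n$ on $\Theta \times \mathcal{Y}^n$ from Theorem \ref{thm:cont} gives $\I\{T_n(\theta'; y^n) \leq t\} \to \I\{T_n(\theta_n; y^n) \leq t\}$ at every $y^n \in \mathcal{Y}^n$ with $T_n(\theta_n; y^n) \neq t$, a $\pr_{\theta_n}$-almost sure statement when $t$ is not an atom; bounded convergence then finishes the job.

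Taking the union over $n$ of the (at most countable) atoms of the distribution of $T_n(\theta_n; Y^n)$ under $\pr_{\theta_n}$, the set of ``good'' $t$'s is cocountable and hence dense in $(0, \infty)$, which is sufficient for weak convergence to $\chi_d^2$. The main obstacle is the continuity argument above, which is subtle because of the indicator functions and forces the restriction to non-atomic $t$; the dependence on $n$ of the relevant distribution is handled cleanly only by the countable-union observation. Once that continuity is in hand, Assumption \ref{assmp:dense} permits the diagonal construction of $\{\theta_n^\star\}$, and applying the hypothesis of the lemma to that non-critical sequence closes the argument.
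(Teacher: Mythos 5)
Your proposal is correct and follows essentially the same route as the paper's proof: a diagonal approximation of $\theta_n$ by nearby non-critical points via Assumption \ref{assmp:dense}, a fixed-$n$ continuity-in-distribution step based on Theorem \ref{thm:cont}, a countable exceptional set of $t$ values, and a triangle inequality together with the fact that convergence of distribution functions on a dense set to the continuous $\chi^2_d$ law suffices. The only real difference is that you establish the fixed-$n$ step directly at the level of distribution functions (Scheff\'e for the change of density plus dominated convergence of the indicators, excluding the atoms of the law of $T_n(\theta_n; Y^n)$ under $\pr_{\theta_n}$), whereas the paper obtains the same weak-continuity statement by citing its supplementary Slutsky/continuous-mapping lemma and then works with the discontinuity points of those laws, which is the same exceptional set.
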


\begin{proof}
  Let $F_n$ denote the cumulative distribution function of $T_n(\theta_n;
  Y_n^n)$ and let $F$ denote that of $\chi^2_d$. Assumption \ref{assmp:dense}
  says that, for every fixed $n$, we can pick a sequence of non-critical points
  $\{\theta_n^m\}$ tending to $\theta_n$ as $m\to\infty$ with $n$ fixed.
  Assumptions \ref{assmp:support}--\ref{assmp:dense} ensure Theorem
  \ref{thm:cont} holds and this implies, essentially by Slutsky and continuous
  mapping theorems (Lemma D.1 in the Supplementary Material),
  \[
  T_n(\theta_n^m; Y^n_m) \rightsquigarrow T_n(\theta_n; Y_n^n), ~~m\to \infty,
  \]
  where $Y^n_m = (Y_{m 1}, \dots, Y_{m n})$ has the distribution indexed by
  $\theta_n^m$. Thus, the corresponding cumulative distribution functions
  $\{F_n^m\}$ tend to $F_n$ at every point of continuity of $F_n$ as
  $m\to\infty$ with $n$ fixed. Let $D_n$ be the set of discontinuities of $F_n$
  and $D = \cup_{n} D_n$. Since any cumulative distribution function has at most
  countably many discontinuities, $D$ is countable as a countable union of
  countable sets. Now for any $t \in \R{}\setminus D$, we can pick, for every
  $n$, an $m = m(n)$ large enough that $\Vert \theta^m_n - \theta_n\Vert \leq
  1/n$ and $\vert F^{m(n)}_n(t) - F_n(t)\vert \leq 1 / n$. We then have by the
  triangle inequality,
  \[
    \vert F_n(t) - F(t)\vert \leq 1 / n + \vert F_n^{m(n)}(t) - F(t)\vert,
  \]
  which tends to zero as $n\to\infty$ by the assumption that
  \eqref{eq:cont:conv:dist} holds along sequences of non-critical points since
  $F$ is continuous; in particular, $t$ is a point of continuity of $F$. The
  proof is completed by observing that, since $D$ is countable, $\R{}\setminus
  D$ is dense in $\R{}$ and hence the convergence in fact holds at every $t \in
  \R{}$ \citep[Proposition 2, Chapter 14]{Fristedt.Gray2013}.
\end{proof}

We are ready to prove Theorem \ref{thm:main}

\begin{proof}[Proof of Theorem \ref{thm:main}]
  By Lemma \ref{lem:noncrit_to_crit_2}, it suffices to consider an arbitrary
  sequence $\{\theta_n\}$ of non-critical points tending to a $\theta \in
  \Theta$. Let $U_{n} = A_n V_n^\tsp s_n(\theta_n; Y_n^n)$, where $V_n =
  [v^n_{\theta 1}, \dots, v_{\theta d}^n] \in \R{d \times d}$ satisfies
  Assumption \ref{assmp:pd} at $\theta$ for every $n$, $A_n = \diag(a_{n1},
  \dots, a_{nd})$ is a scaling matrix defined by the $\{a_{nj}\}$ given by Lemma
  \ref{lem:scale_score_lim} (with the index $m = n$), and $Y_n^n = (Y_{n1},
  \dots, Y_{nn})$ has the distribution indexed by $\theta_n$. By re-ordering the
  elements of $\theta$ if necessary, we may partition $V_n = [V_{n1}, V_{n2}]$
  and have that the columns of $V_{n1}$ are the critical vectors at $\theta$. As
  noted following Assumption \ref{assmp:loew_order}, these actually do not
  depend on $n$, but $V_{n 2}$ may. With these definitions,
  $T_n(\theta_n; Y^n_n) = U_n^\tsp \cov(U_n)^{-1}U_n$ and, hence, the continuous
  mapping theorem  \citep[Theorem 2.7]{Billingsley1999} says it suffices to show
  $\cov(U_n)^{-1/2}U_n \rightsquigarrow \rN(0, I_d)$.

  Let $U_{nt} = t^\tsp U_n$ for an arbitrary $t\in \R{d}$ with $\Vert t\Vert =
  1$. We start by verifying Lyapunov's conditions \citep[Theorem
  27.3]{Billingsley1995} for $U_{nt} / \sigma_{nt} \rightsquigarrow \rN(0, 1)$,
  where $\sigma^2_{nt} = \var(U_{nt})$. First, $\E(U_{nt}) = 0$ since
  $\E\{s_n(\theta_n; Y^n_n)\} = 0$. Thus,
  it suffices to show $\sigma^2_{nt} \geq \epsilon n$ for some $\epsilon > 0$
  and $\E(\vert U_{nti}\vert^{2 + \delta}) \leq M$ for some $\delta > 0$ and $M
  < \infty$, where $U_{nti} = t^\tsp A_n V_n^\tsp s^i(\theta_n; Y_{ni})$ is the
  $i$th summand of $U_{nt}$. For the latter we have, since $\Vert t\Vert = \Vert
  v_{nj}\Vert = 1$, using the triangle and generalized mean inequalities, with
  $\tilde{\theta}^{(j)}$ as in the proof of Lemma \ref{lem:scale_score_lim},
  \begin{align*}
    \vert U_{nti}\vert^{2 + \delta}  \leq \left(\sum_{j = 1}^d \vert v_{n
    j}^\tsp \nabla_{j}^{k_j - 1 } s^i(\tilde{\theta}^{(j)}_n;
    Y_{ni})\vert\right)^{2 + \delta} \leq 4^{1 + \delta} \sum_{j = 1}^d \sum_{l
    = 1}^d \vert \nabla^{k_j - 1}_j \nabla_l
    \ell^i(\tilde{\theta}_n^{(j)}; Y_{ni})\vert^{2 + \delta},
  \end{align*}
  whose expectation is less than some $M < \infty$ for all large enough $n$ by
  Assumptions \ref{assmp:deriv} and \ref{assmp:pd}.

  Next, by Assumption \ref{assmp:loew_order},
  \[
  \sigma^2_{nt} \geq n c_1(\theta_n) t^\tsp A_n
  V_n^\tsp \mathcal{I}^1(\theta_n) V_n A_n t \geq n c_1(\theta_n)\emin\{A_n
  V_n^\tsp \mathcal{I}^1(\theta_n) V_n A_n\},
  \]
  where $\emin(\cdot)$ is the smallest eigenvalue.
  Since $c_1$ is continuous and positive at $\theta$, the right-hand side is
  greater than $ \epsilon n$ for some $\epsilon > 0$ if $\liminf_{n\to \infty}
  \emin\{A_n V_n^\tsp \mathcal{I}^1(\theta_n) V_n A_n\} > 0$. We will
  prove this by contradiction. To that end, suppose $\liminf_{n\to
  \infty} \emin\{A_n V_n^\tsp \mathcal{I}^1(\theta_n) V_n A_n\} = 0$ and
  extract a subsequence tending to zero. Then, by compactness of the set of
  semi-orthogonal matrices and the Bolzano--Weierstrass property
  \citep[Theorem 0.25]{Folland2007}, there is a further subsequence along
  which $V_{n2}$ tends to a semi-orthogonal $V_2$. Along this
  subsequence, by arguments almost identical to those in the proof of Theorem
  \ref{thm:cont}, $A_n V_n^\tsp s_1(\theta_n; Y_{n1})$ tends in
  distribution to a random vector with positive definite covariance matrix,
  and $A_n V_n^\tsp \mathcal{I}^1(\theta_n) V_n A_n$ tends to that
  covariance matrix. Thus, along the subsequence, by Weyl's inequalities
  \citep[Section III.2]{Bhatia2012}, $\emin\{A_n V_n^\tsp
  \mathcal{I}^1(\theta_n) V_n A_n\}$ tends to a strictly positive number,
  which is the desired contradiction.

  We have proven $U_{nt} / \sigma_{nt} \rightsquigarrow \rN(0, 1)$, from which
  $\cov(U_n)^{-1/2}U_n \rightsquigarrow \rN(0, I_d)$ follows \citep[Lemma
  2.1]{Biscio.etal2018} if
  \[
    0 < \liminf_{n\to\infty} \emin\{n^{-1}\cov(U_{n})\} \leq \limsup_{n
    \to\infty}\emax\{n^{-1}\cov(U_{n})\} < \infty,
  \]
  where $\emax(\cdot)$ is the maximum eigenvalue.
  For the first inequality, Assumption \ref{assmp:loew_order} gives $n^{-1}
  \cov(U_n) \succeq c_1(\theta_n) A_n V_n^\tsp \mathcal{I}^1(\theta_n) V_n A_n$,
  and we have already shown the right-hand side has smallest eigenvalue
  asymptotically bounded away from zero. A similar argument, using that
  Assumption \ref{assmp:loew_order} implies $\emax\{n^{-1} \cov (U_n)\} \leq
  c_2(\theta_n) \emax\{A_n V_n^\tsp \mathcal{I}^1(\theta_n)V_n A_n\}$,
  establishes the upper bound and this completes the proof.
\end{proof}

Theorems \ref{thm:cont} and \ref{thm:main} have the following corollary which
recovers a result of \citet{Bottai2003} but with several conditions weakened.
The proof is a straightforward verification of Assumptions
\ref{assmp:deriv}--\ref{assmp:dense} and hence omitted.

\begin{corol} \label{corol:uni_param}
  If $d = 1$; the $Y_i$ are identically distributed; Assumption
  \ref{assmp:support} holds; for every $\theta' \in \Theta$
  there exists an open ball $B = B(\theta') \subseteq \R{}$ centered at $\theta'$ such that
  (i) $\nabla^2 \ell^1(\theta; y_1)$ exists on $B$ for almost every $y_1$ and is
  continuous in $(\theta, y_1)$, and (ii) it holds that
  \[
  \sup_{\theta \in B \cap \Theta, \tilde{\theta} \in B}\int \vert
\nabla^2 \ell^1(\tilde{\theta}; y_1)\vert^{2 + \delta}f_\theta(y_1)\,\gamma_1(\dd y_1) < \infty
  \]
  for some $\delta = \delta(\theta') > 0$; and $\var_{\theta} \{\nabla^2 \ell^1(\theta; Y_1)\} > 0$ for every critical $\theta$; then the conclusions of Theorems
  \ref{thm:cont} and \ref{thm:main} hold.
\end{corol}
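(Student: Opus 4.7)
The plan is to verify Assumptions \ref{assmp:deriv}--\ref{assmp:dense} under the hypotheses of the corollary and then invoke Theorems \ref{thm:cont} and \ref{thm:main}. Taking $k(\theta') = 2$ for every $\theta'$, Assumption \ref{assmp:deriv} is immediate: joint continuity of $\ell^1$, $\nabla \ell^1$, and $\nabla^2 \ell^1$ in $(\theta, y_1)$ follows from hypothesis (i); the supremum over $i$ in \eqref{eq:deriv_cond} is trivial because the $Y_i$ are identically distributed; and hypothesis (ii) gives the moment bound on the top-order derivative $\nabla^2 \ell^1$, which by the paper's remark following Assumption \ref{assmp:deriv} is all that is needed. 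Assumption \ref{assmp:loew_order} then holds with $c_1 \equiv c_2 \equiv 1$ since $\mathcal{I}^i = \mathcal{I}^1$ for every $i$.

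For Assumption \ref{assmp:pd}, take $V_n = 1$ (the only $1 \times 1$ orthonormal basis up to sign). At a non-critical $\theta$, set $k_1 = 1$; then $\tilde{\mathcal{I}}_n = \mathcal{I}_n > 0$, and conditions (ii) and (iii) are vacuous. At a critical $\theta$, set $k_1 = 2$; condition (i) reduces to $\tilde{\mathcal{I}}_n(\theta) = n \var\{\nabla^2 \ell^1(\theta; Y_1)\} > 0$, which is the final hypothesis of the corollary; condition (ii) with $k_1 = 2$ only requires $s_n(\theta; y^n) = 0$ for almost every $y^n$ (since $\tilde{\theta}_1 = \theta$ forces $\tilde{\theta} = \theta$ when $d = 1$), which follows from $\var\{s_n(\theta; Y^n)\} = \mathcal{I}_n(\theta) = 0$ combined with $\E_\theta s_n(\theta; Y^n) = 0$, noted in the paper as a consequence of Assumption \ref{assmp:deriv}; and condition (iii) holds since $k_1 \leq 2 = k$.

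The only nontrivial step, which I expect to be the main obstacle, is Assumption \ref{assmp:dense}. At a non-critical $\theta_0$, dominated convergence using the moment bound from hypothesis (ii) shows $\theta \mapsto \mathcal{I}^1(\theta) = \E_\theta\{s^1(\theta; Y_1)^2\}$ is continuous, so a full neighborhood of $\theta_0$ consists of non-critical points. At a critical $\theta_0$, I claim $\theta_0$ is isolated among critical points, and prove this by contradiction. Suppose $\theta_n \to \theta_0$ with each $\theta_n$ critical and $\theta_n \neq \theta_0$. Then $s^1(\theta_n; y_1) = s^1(\theta_0; y_1) = 0$ for almost every $y_1$, so Taylor's theorem with Lagrange remainder, applied on the ball $B(\theta_0)$ from Assumption \ref{assmp:deriv}, gives $(\theta_n - \theta_0)\nabla^2 \ell^1(t_n(y_1); y_1) = 0$ for some $t_n(y_1)$ between $\theta_0$ and $\theta_n$. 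Hence $\nabla^2 \ell^1(t_n(y_1); y_1) = 0$ for almost every $y_1$ and every $n$; joint continuity then forces $\nabla^2 \ell^1(\theta_0; y_1) = 0$ for almost every $y_1$, contradicting $\var\{\nabla^2 \ell^1(\theta_0; Y_1)\} > 0$. Thus non-critical points are dense near every critical point as well, and Theorems \ref{thm:cont} and \ref{thm:main} deliver the conclusions.
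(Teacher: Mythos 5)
Your route is the one the paper itself indicates: the proof is omitted there as ``a straightforward verification of Assumptions \ref{assmp:deriv}--\ref{assmp:dense},'' and your treatment of Assumptions \ref{assmp:loew_order}, \ref{assmp:pd} and \ref{assmp:dense} is essentially right. For Assumption \ref{assmp:pd}, taking $k_1=1$ at non-critical points and $k_1=2$ at critical points, with $\tilde{\mathcal{I}}_n(\theta)=n\var_\theta\{\nabla^2\ell^1(\theta;Y_1)\}>0$ and condition (ii) reducing (since $d=1$) to $s_n(\theta;y^n)=0$ almost everywhere via $\mathcal{I}_n(\theta)=0$ and the mean-zero property, is exactly the intended verification. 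Your isolation-by-contradiction argument for Assumption \ref{assmp:dense} is sound; note, though, that your continuity-of-$\mathcal{I}^1$ step at non-critical points is unnecessary (the constant sequence $\theta_m=\theta_0$ already witnesses the assumption there) and, as stated, is not actually justified by hypothesis (ii), which bounds $\nabla^2\ell^1$ rather than the score.

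The thin spot is Assumption \ref{assmp:deriv}. With $k=2$, condition \eqref{eq:deriv_cond} covers all derivatives of order at most $k$, including the score, and the remark you cite does not say that only the top-order derivative matters: what the proofs of Theorems \ref{thm:cont} and \ref{thm:main} actually use are moments of $\nabla_j^{k_j-1}\nabla_l\ell^i$, which for $k_j=1$ --- i.e., in the uniform-integrability and Lyapunov steps along sequences tending to \emph{non-critical} points --- is the first derivative itself. So you need to say how $(2+\delta)$ moments of the score are controlled. Near a critical point $\theta_c$ this does follow from the corollary's hypotheses and is worth spelling out: $s^1(\theta_c;\cdot)=0$ almost everywhere, so $s^1(\tilde\theta;y)=\int_{\theta_c}^{\tilde\theta}\nabla^2\ell^1(t;y)\,\dd t$ and Minkowski's integral inequality transfers the uniform bound on $\nabla^2\ell^1$ to the score on the ball. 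At non-critical points whose neighborhoods contain no critical point, however, the stated hypotheses do not visibly deliver this (one needs some separate control, e.g.\ finiteness and local uniform integrability of the score, or an i.i.d.-specific Lindeberg argument); this imprecision is arguably inherited from the corollary's statement itself, but simply waving at the remark does not close it, and your proof should either supply the Taylor/Minkowski patch near critical points and flag the remaining case, or add the missing first-derivative moment condition.
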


We end this section by illustrating the introduced ideas in the example from
the introduction. More complicated mixed models are considered in the next section.

\begin{customexm}{1}[continued]
  Recall that the $Y_i \in \R{r}$, $i = 1, \dots, n$, are independent and
  multivariate normally distributed with mean $0$ and covariance matrix
  $\Sigma(\theta) = \theta^21_r1_r^\tsp + I_r$, and that the score
  for one observation is
  \[
    s^i(\theta; y_i) = -\frac{r \theta}{1 + r \theta^2} +
    (y_i^\tsp 1_r)^2 \frac{\theta}{(1 + r\theta^2)^2} = \frac{\theta}{1 + r
    \theta^2}\left(-r + \frac{(y_i^\tsp 1_r)^2}{1 + r \theta^2} \right).
  \]
  Since $Y_i$ has positive variance for all $\theta$, the score is equal to zero
  for almost every $y_i$ if and only if $\theta = 0$, verifying Assumption
  \ref{assmp:dense}. Assumption \ref{assmp:support} holds since $f_\theta(y_i) >
  0$ for all $y_i \in \R{r}$, with $\gamma_i \equiv \gamma$ being Lebesgue
  measure on $\R{r}$. To verify Assumption \ref{assmp:pd}, observe that at
  $\theta = 0$ the second derivative of the log-likelihood is
  \[
    \lim_{\theta \to 0}\frac{s^i(\theta; y_i) - s^i(0; y_i)}{\theta}
    =\lim_{\theta \to 0} \left[\frac{1}{1 + r \theta^2}\left(-r + \frac{(y_i^\tsp
    1_r)^2}{1 + r \theta^2} \right)\right] = -r + (y_i^\tsp 1_r)^2,
  \]
  which has variance bounded away from zero when evaluated at $y_i = Y_i$. Thus,
  Assumption \ref{assmp:pd} holds at $\theta = 0$ with $k_1 = 2$ and at $\theta
  \neq 0$ with $k_1 = 1$. It is straightforward to verify Assumptions
  \ref{assmp:deriv} and \ref{assmp:loew_order}, and hence conclude Theorems
  \ref{thm:cont} and \ref{thm:main} apply. For additional insight, we also
  provide a more direct argument for why the score test standardized by expected
  information works in this example while other common test statistics do not.
  At $\theta \neq 0$ (see the Supplementary Material
  for details),
  \[
    T_n(\theta; Y^n) = \frac{1}{2r n} \left\{-rn +  \sum_{i =
    1}^n(Y_i^\tsp 1_r)^2 / (1 + r \theta^2) \right\}^2 \sim \frac{(-rn +  r
    \chi^2_n )^2}{2r n}.
  \]
  It is immediate from the middle expression that $T_n(\cdot;\cdot)$ has a
  continuous extension on $[0, \infty) \times \R{nr}$. Essentially,
  standardizing by expected information cancels the leading factor $\theta / (1
  + r \theta^2)$ in the expression for $s^i(\theta; y_i)$, which was the reason
  for the singularity at $\theta = 0$. Notably, this cancellation does not
  happen if one instead standardizes by observed information. The last
  expression shows that, in this example, the distribution of the proposed test
  statistic is in fact independent of the parameter, and hence it is almost
  immediate that \eqref{eq:cont:conv:dist} and, hence, Theorem \ref{thm:main}
  hold; writing $\chi^2_n$ as the sum of $n$ independent $\chi^2_1$ and an
  appeal to the classical central limit theorem is all that is needed. To
  emphasize the fact that other common test statistics do not enjoy the same
  asymptotic properties, we show in Theorem C.1 of the Supplementary Material
  that the asymptotic distribution of the score test statistic standardized by
  observed information evaluated at the true $\theta_n$, is different depending
  on how $\{\theta_n\}$ tends to $0$.
\end{customexm}

\section{Inference near critical points in mixed models} \label{sec:mixed}
\subsection{Scale parameters at zero}
Suppose momentarily the $Y_i$ are identically distributed; that
is, they are independent copies of some $Y \in \R{r}$. Then,
$\mathcal{I}_n(\theta) = n \mathcal{I}_1(\theta)$ and we write
$\mathcal{I}(\theta) = \mathcal{I}_1(\theta)$. Similarly, we drop the
observation index on $y \in \R{r}$, $s(\theta; y) = \nabla \log
f_\theta(y)$, and $\gamma$. Suppose further $Y$ has conditional density
$f_{\theta}(y\mid w)$ against $\gamma$ given a vector of random effects $W \in
\R{q}$ whose elements are independent with mean zero and unit variance. Denote
the distribution of $W$ by $\nu$. We partition the parameter vector as $\theta =
(\lambda, \psi) \in \R{d_1}\times \R{d_2}$ and assume, for some parameter matrix
$\Lambda = \Lambda(\lambda)\in \R{q\times q}$ and known $h:\R{r} \times
\R{d_2} \times \R{q} \to [0, \infty)$,
\[
  f_\theta(y\mid w) = h(y, \psi, \Lambda(\lambda) w).
\]
We call $\lambda$ a scale parameter since it determines the matrix
$\Lambda(\lambda)$ scaling $W$. The vector $\psi$ includes any other parameters
in the distribution of $Y\mid W$. With these assumptions, the marginal
distribution of $Y$ has density against $\gamma$ given by
\begin{equation}\label{eq:model}
  f_\theta(y) = \int f_\theta(y\mid w)\,\nu(\dd w) = \int h(y,
  \psi, \Lambda w)\, \nu(\dd w).
\end{equation}
For example, a generalized linear mixed model with linear predictor $X \psi +
Z U$, where $U = \Lambda  W \sim \rN(0, \Lambda \Lambda^\tsp)$, and $X \in
\R{r\times d_2}$ and $Z \in \R{r\times q}$ are design matrices, satisfies
\eqref{eq:model} for an appropriate choice of $h$.

In practice it is often possible to define $\psi$, $\lambda$, $\Lambda$ and $h$
so that any critical points are so because the leading $d_1\times d_1$ block of
$\mathcal{I}(\theta)$ is singular. That is, all critical points are due to
linear combinations of the scores for the scale parameters being equal to zero
almost surely. To investigate when the latter can happen, let $\nabla_{(3)} h(y,
\psi, \Lambda w)$ denote the gradient of $h(y, \psi, \cdot)$ evaluated at
$\Lambda w$, the subscript (3) here indicating gradient with respect to the
third argument vector. Then, assuming the derivatives exist and can be moved inside the
integral, the $j$th element of $s(\theta; y)$, $j = 1, \dots, d_1$, is
\begin{equation}\label{eq:mm_score}
  \frac{1}{f_\theta(y)}\int \nabla_j
  f_\theta(y\mid w)\,\nu(\dd w) =  \frac{1}{f_\theta(y)}\int \{\nabla_{(3)} h(y, \psi,
  \Lambda w)\}^\tsp \{\nabla_j \Lambda(\lambda)\} w
  \,\nu(\dd w),
\end{equation}
where $\nabla_j$ acts elementwise on matrices. The following result gives a
sufficient condition for linear combinations of these scores to be equal to
zero for almost every $y$.

\begin{prop} \label{prop:crit}
  If \eqref{eq:mm_score} holds at $\theta \in \Theta$ and for a $v =
  [v_1, \dots, v_{d_1}, 0, \dots, 0]^\tsp \in \R{d}$ it holds that
  \[
    \sum_{j = 1}^{d_1} v_j \{\nabla_j \Lambda(\lambda)\} W~~\text{and}~~ \Lambda(\lambda)W
  \]
  are independent under $\theta$; then $\mathcal{I}(\theta)v = 0$.
\end{prop}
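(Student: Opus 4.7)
The plan is to show directly that the linear combination $v^\tsp s(\theta; Y)$ vanishes almost surely under $\pr_\theta$, which forces $\mathcal{I}(\theta) v = 0$ because $v^\tsp \mathcal{I}(\theta) v = \var_\theta\{v^\tsp s(\theta; Y)\}$ and positive semi-definiteness of $\mathcal{I}(\theta)$ then gives $\mathcal{I}(\theta) v = 0$.

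First I would exploit the zero pattern of $v$: since $v_j = 0$ for $j > d_1$, only the first $d_1$ score components appear, so $v^\tsp s(\theta; y) = \sum_{j=1}^{d_1} v_j s_j(\theta; y)$ is given by \eqref{eq:mm_score}. Interchanging the finite sum with the integral (which \eqref{eq:mm_score} already presumes to be legitimate), I would write
\[
  v^\tsp s(\theta; y) \;=\; \frac{1}{f_\theta(y)} \int \{\nabla_{(3)} h(y, \psi, \Lambda(\lambda) w)\}^\tsp A(\lambda) w \,\nu(\dd w),
\]
where $A(\lambda) := \sum_{j=1}^{d_1} v_j \nabla_j \Lambda(\lambda) \in \R{q\times q}$.

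The key structural observation is that $\nabla_{(3)} h(y, \psi, \Lambda w)$ depends on $w$ only through $\Lambda w$. So, with $W \sim \nu$, the integral equals $\E_\theta[ \{\nabla_{(3)} h(y, \psi, \Lambda W)\}^\tsp A W ]$, an expectation of a product of a function of $\Lambda W$ with the linear statistic $A W$. Because $A W$ and $\Lambda W$ are independent by hypothesis, the expectation factors:
\[
  \E_\theta[\{\nabla_{(3)} h(y, \psi, \Lambda W)\}^\tsp A W] \;=\; \E_\theta[\{\nabla_{(3)} h(y, \psi, \Lambda W)\}^\tsp]\, \E_\theta[A W].
\]
Since the elements of $W$ have mean zero, $\E_\theta[A W] = A\,\E_\theta[W] = 0$. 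Hence $v^\tsp s(\theta; y) = 0$ for almost every $y$, and therefore $\mathcal{I}(\theta) v = \cov_\theta\{s(\theta; Y), v^\tsp s(\theta; Y)\} = 0$ as desired.

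I do not anticipate a serious obstacle: Assumption \ref{assmp:deriv} furnishes the integrability/differentiation-under-the-integral needed to justify \eqref{eq:mm_score}, and the rest is just using the independence assumption together with $\E_\theta[W] = 0$. The only mildly delicate point is clearly articulating why $\nabla_{(3)} h(y, \psi, \Lambda W)$ is a measurable function of $\Lambda W$ (so that the independence hypothesis applies) and noting that the factorization yields $0$ regardless of the value of $\E_\theta[\{\nabla_{(3)} h(y, \psi, \Lambda W)\}^\tsp]$, so no extra moment conditions on $\nabla_{(3)} h$ are needed beyond what \eqref{eq:mm_score} already assumes.
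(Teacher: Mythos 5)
Your proposal is correct and follows essentially the same route as the paper's proof: reduce to showing $v^\tsp s(\theta; y) = 0$ for almost every $y$, note that $\nabla_{(3)} h(y,\psi,\Lambda w)$ depends on $w$ only through $\Lambda w$ so the independence hypothesis lets the expectation factor, and conclude from $\E_\theta(W) = 0$. The only difference is cosmetic (you spell out why vanishing of $v^\tsp s$ almost surely forces $\mathcal{I}(\theta)v = 0$, which the paper leaves implicit).
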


\begin{proof}
  It suffices to show that $v^\tsp s(\theta; y) = 0$ for almost every $y$. We have
  \[
    v^\tsp s(\theta; y) = \frac{1}{f_\theta(y)} \int \{\nabla_{(3)} h(y, \psi,
    \Lambda w)\}^\tsp \sum_{j = 1}^{d_1} v_j \{\nabla_j \Lambda(\lambda)\} w \,\nu(\dd w).
  \]
  By the assumptions, the integral is the expectation of the inner product of
  two independent random vectors. Thus, since (measurable) functions of
  independent random variables are independent, the integral in the last display is
  \[
  \int \{\nabla_3 h(y, \psi,
  \Lambda w)\}^\tsp \,\nu(\dd w)  \int \sum_{j = 1}^{d_1} v_j \{\nabla_j \Lambda(\lambda)\} w \,\nu(\dd w),
  \]
  which is equal to zero since $\int w\, \nu(\dd w) = 0$, and that completes
  the proof.
\end{proof}

Proposition \ref{prop:crit} can help identify critical points. In fact, even
though the condition that $\sum_{j = 1}^{d_1} v_j \{\nabla_j \Lambda(\lambda)\}
W$ and $\Lambda W$ are independent random variables is not necessary, we will
see that in practice it often identifies all critical points.

In what follows, to facilitate further analysis, we assume $\Lambda$ is
diagonal. Specifically, we assume every diagonal element of $\Lambda$ is one of the
$\lambda_j$. Then the $\lambda_j$ are scale parameters in the usual sense and
\begin{equation} \label{eq:indep_scale}
  \Lambda(\lambda) = \diag(\lambda_{(1)}, \dots, \lambda_{(q)}),
\end{equation}
where $\lambda_{(l)}$ means the $\lambda_j$ ($j = 1, \dots
,d_1$) in the $l$th diagonal element of $\Lambda(\lambda)$ ($l
= 1, \dots, q$). For example, if $q = 3$
one possibility is that $\Lambda(\lambda) = \diag(\lambda_1, \lambda_2,
\lambda_1)$ so that the first and third element of $W$ have the same scale
parameter; then $d_1 = 2$, $\lambda_{(1)} = \lambda_1$, $\lambda_{(2)} =
\lambda_2$, and $\lambda_{(3)} = \lambda_1$. Assuming \eqref{eq:indep_scale} is common in
practice and is less restrictive than it may first seem: it allows for the
possibility that $w$ affects the conditional density $f_\theta(y\mid w)$ through
$H \Lambda w$ for some $H\in \R{q\times q}$, which may depend on $\psi$. Then $U
= H \Lambda W$ is a vector of dependent random effects whose scales are
determined by $\lambda$ and whose dependence is determined by $H$. In
particular, by letting $H$ be an orthogonal matrix the $\lambda_j^2$ are the
eigenvalues of $\cov_\theta(U) = H \Lambda(\lambda)^2 H^\tsp$.

With \eqref{eq:indep_scale}, Proposition \ref{prop:crit} has the following
corollary which says standard basis vectors are critical vectors for scale
parameters at zero.

\begin{corol} \label{corol:indep_crit}
  If \eqref{eq:mm_score} holds at $\theta \in \Theta$, $\Lambda(\lambda)$
  satisfies \eqref{eq:indep_scale}, and $\theta_j = \lambda_j = 0$; then
  $\mathcal{I}(\theta) e_j = 0$, where $e_j$ is the $j$th standard basis vector
  in $\R{d}$.
\end{corol}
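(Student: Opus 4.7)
The plan is to apply Proposition \ref{prop:crit} directly with $v = e_j$, so the task reduces to verifying that $\{\nabla_j \Lambda(\lambda)\}W$ and $\Lambda(\lambda)W$ are independent when $\lambda_j = 0$. Because of the diagonal structure in \eqref{eq:indep_scale}, both quantities are vectors whose $l$th coordinate depends on $W$ only through $W_l$, so by the independence of the components of $W$, it suffices to show that the set of indices $l$ on which $\{\nabla_j \Lambda(\lambda)\}W$ has a nonzero entry is disjoint from the set of indices on which $\Lambda(\lambda)W$ has a nonzero entry.

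The identification of these two sets is the key step. Writing $\Lambda(\lambda) = \diag(\lambda_{(1)}, \dots, \lambda_{(q)})$, where each $\lambda_{(l)}$ is symbolically one of the free parameters $\lambda_1, \dots, \lambda_{d_1}$, we have $\nabla_j \Lambda(\lambda) = \diag(\I\{\lambda_{(1)} = \lambda_j\}, \dots, \I\{\lambda_{(q)} = \lambda_j\})$, where the indicator is symbolic. Thus the $l$th entry of $\{\nabla_j \Lambda(\lambda)\}W$ is $W_l$ exactly when position $l$ carries the symbol $\lambda_j$, and zero otherwise. Meanwhile, the $l$th entry of $\Lambda(\lambda)W$ is $\lambda_{(l)} W_l$, which vanishes whenever the value $\lambda_{(l)} = 0$. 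Under the hypothesis $\lambda_j = 0$, every index $l$ for which $\lambda_{(l)}$ is symbolically $\lambda_j$ has value $\lambda_{(l)} = 0$; in particular, on every such index $\Lambda(\lambda)W$ vanishes. Hence the two index sets are disjoint.

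Having established disjointness, the elements of $W$ that enter $\{\nabla_j \Lambda(\lambda)\}W$ are distinct from those entering $\Lambda(\lambda)W$, so independence of the coordinates of $W$ yields independence of the two random vectors. Proposition \ref{prop:crit} then delivers $\mathcal{I}(\theta) e_j = 0$.

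The main obstacle I anticipate is purely bookkeeping: carefully distinguishing between symbolic equality of $\lambda_{(l)}$ with the free parameter $\lambda_j$ (which determines where $\nabla_j \Lambda$ is nonzero) and numerical equality $\lambda_{(l)} = 0$ (which determines where $\Lambda(\lambda) W$ vanishes). Once this distinction is made cleanly, the argument reduces to a one-line application of Proposition \ref{prop:crit}, so no further analytic work is required.
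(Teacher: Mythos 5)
Your proposal is correct and follows essentially the same route as the paper: apply Proposition \ref{prop:crit} with $v = e_j$ and note that, because $\lambda_j = 0$, the vector $\{\nabla_j \Lambda(\lambda)\}W$ depends only on the coordinates of $W$ scaled by $\lambda_j$ while $\Lambda(\lambda)W$ depends only on the remaining coordinates, so independence of the components of $W$ gives the required independence. Your explicit separation of symbolic equality $\lambda_{(l)} = \lambda_j$ from numerical equality $\lambda_{(l)} = 0$ is just a more careful writing of the same bookkeeping the paper does implicitly.
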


\begin{proof}
  With \eqref{eq:indep_scale}, $\nabla_j \Lambda(\lambda)$, $j
  = 1, \dots, d_1$, is a diagonal matrix whose $l$th diagonal element ($l = 1,
  \dots, q$) is 1 if the $l$th diagonal element of $\Lambda(\lambda)$ is
  $\lambda_j$, and zero otherwise. Thus, at $\theta$ such that $\lambda_j = 0$,
  $\{\nabla_j \Lambda(\lambda)\} W$ is a function of the elements of $W$ scaled
  by $\lambda_j$, and $\Lambda(\lambda) W$ is a function of the elements of $W$
  not scaled by $\lambda_j$. Thus, Proposition \ref{prop:crit} is satisfied with
  $v = e_j$.
\end{proof}

We illustrate the wide applicability of Corollary \ref{corol:indep_crit} using
another example.

\begin{exm}[Generalized linear mixed model] \label{ex:glmm}
  Let $X \in \R{r\times d_2}$ and $Z\in \R{r \times q}$ be design matrices and
  suppose
  \[
    f_\theta(y\mid w) = h(y, \psi, \Lambda w) = \exp\left\{y^\tsp (X\psi + Z \Lambda w) -
    c(X\psi + Z \Lambda w)\right\},
  \]
  where $c: \R{r} \to \R{}$ is the sum of the cumulant functions for the $r$
  responses \citep[see e.g.][for definitions]{McCulloch.etal2008}. For example,
  the $j$th element of $\nabla c(X\psi + Z \Lambda w)$, the gradient of $c$
  evaluated at the linear predictor, is the conditional mean of the $j$th
  response given $w$. Assume \eqref{eq:indep_scale} and, for $j = 1, \dots,
  d_1$, let $[j]$ denote the set of $k \in \{1, \dots, q\}$ such that
  $\Lambda_{kk} = \lambda_j$. Then,
  \[
   \nabla_j Z \Lambda(\lambda) w  = \sum_{k \in [j]}Z^k w_k,
  \]
  where $Z^k$ is the $k$th column of $Z$. Hence, assuming differentiation under
  the integral is permissible,
  \begin{align*}
    s_j(\theta; y) &= \frac{1}{f_\theta(y)}\int f_\theta(y\mid w) \{y -
    \nabla c(X\psi + Z\Lambda w)\}^\tsp\sum_{k \in [j]} Z^k w_k \,\nu(\dd w).
  \end{align*}
  Corollary \ref{corol:indep_crit} suggests $s_j(\theta; y)$ is zero for almost
  every $y$ at $\theta$ such that $\lambda_j = 0$, which can here be
  verified directly: the sum in the integral is a function of the $w_k$ scaled
  by $\lambda_j$, while the remaining part of the integrand is a function of the
  $w_k$ not scaled by $\lambda_j$. Hence, the integral is
  \[
    \int f_\psi(y\mid \Lambda w) [y - \nabla c(X\psi + Z\Lambda
    w)]^\tsp\,\nu(\dd w) \int \sum_{k \in [j]} Z^k w_k \,\nu(\dd w),
  \]
  which is equal to zero for every $y$ since the second integral is
  the expectation of a linear combination of the $W_k$, which have mean zero.
\end{exm}

  It is clear from Example \ref{ex:glmm} that critical points occur in many
  mixed models and that standard basis vectors are often critical vectors. When
  all critical vectors are standard basis vectors, the calculations required to
  verify Assumption \ref{assmp:pd} are simplified. The following result
  illustrates this point and will be useful in examples.

  \begin{prop}\label{prop:assmp4}
    If Assumptions \ref{assmp:support}--\ref{assmp:deriv} hold, (i) $Y_1, \dots,
    Y_n$ are independent copies of $Y\in \R{r}$, (ii) for every $\theta \in
    \Theta$ the null space of $\mathcal{I}(\theta)$ is spanned by $\{e_j, j:
    \lambda_j = 0\}$, and (iii) the random vector
    \[
      \bar{s}(\theta; Y) = \left[\nabla^{k_1}_1 \log f_\theta(Y), \dots,
      \nabla^{k_d}_d \log f_\theta(Y)\right]^\tsp, ~~ Y \sim f_\theta,
    \]
    where $k_j = k_j(\theta) = 2$ if $\theta_j = \lambda_j = 0$ and $k_j = 1$ otherwise, has
    positive definite covariance matrix with finite entries at every $\theta \in
    \Theta$; then Assumption \ref{assmp:pd} holds.
  \end{prop}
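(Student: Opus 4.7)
My plan is to exhibit explicit eigenvectors $\{v_{\theta j}^n\}$ and integers $\{k_j\}$ satisfying Assumption \ref{assmp:pd} by exploiting that, under hypothesis (ii), the critical vectors are standard basis vectors. Let $J_0 = \{j : \lambda_j = 0\}$ and $N = \operatorname{span}\{e_j : j \in J_0\}$, which by hypothesis (ii) equals the null space of $\mathcal{I}(\theta)$. For $j \in J_0$ I set $v_{\theta j}^n = e_j$ and $k_j = 2$. Since $\mathcal{I}(\theta)$ is symmetric, the orthogonal complement $N^{\perp} = \operatorname{span}\{e_l : l \notin J_0\}$ is invariant, and the restriction of $\mathcal{I}(\theta)$ to $N^{\perp}$ is symmetric and positive definite; I diagonalize this restriction by an orthonormal basis $\{v_{\theta l}^n : l \notin J_0\} \subset N^{\perp}$ and take $k_l = 1$ for these $l$. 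Because the $Y_i$ are iid, the same basis diagonalizes $\mathcal{I}_n(\theta) = n\mathcal{I}(\theta)$ for every $n$.

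With this setup, condition (iii) of Assumption \ref{assmp:pd} is immediate since $k_j \leq 2 \leq k(\theta)$ by Assumption \ref{assmp:deriv}. Condition (ii) is vacuous for indices with $k_j = 1$; for $j \in J_0$ it reduces to requiring $e_j^\tsp s_n(\tilde\theta; y^n) = 0$ for almost every $y^n$ and every $\tilde\theta \in \Theta$ with $\tilde\theta_j = 0$. For any such $\tilde\theta$, hypothesis (ii) gives $\mathcal{I}(\tilde\theta)e_j = 0$, and combining this with $\E_{\tilde\theta}\{s_n(\tilde\theta; Y^n)\} = 0$ (noted after Assumption \ref{assmp:deriv}) yields the required vanishing.

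The substantive step, and the main obstacle, is condition (i): showing $\tilde{\mathcal{I}}_n(\theta)$ is positive definite. Since the $Y_i$ are iid, $\tilde{\mathcal{I}}_n(\theta) = n\tilde{\mathcal{I}}_1(\theta)$, so it suffices to establish positive definiteness of $\cov_\theta\{\tilde s_1(\theta; Y)\}$. The plan is to exhibit an invertible linear map $B$ with $\tilde s_1(\theta; Y) = B\,\bar s(\theta; Y)$, where $\bar s$ is the vector in hypothesis (iii). For $j \in J_0$, the $j$th entry of $\tilde s_1(\theta; Y)$ equals $e_j^\tsp \nabla_j s(\theta; Y) = \nabla_j^2 \log f_\theta(Y) = \bar s_j(\theta; Y)$. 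For $j \notin J_0$, since $v_{\theta j}^n \in N^{\perp}$ has zero components on $J_0$, the $j$th entry of $\tilde s_1(\theta; Y)$ equals $\sum_{l \notin J_0} v_{\theta j, l}^n \nabla_l \log f_\theta(Y) = \sum_{l \notin J_0} v_{\theta j, l}^n \bar s_l(\theta; Y)$. After permuting coordinates so indices in $J_0$ are listed first, $B$ takes the block-diagonal form $\diag(I_{|J_0|}, V_{22}^\tsp)$ with $V_{22}$ orthogonal, hence invertible. Therefore $\tilde{\mathcal{I}}_1(\theta) = B\,\cov_\theta\{\bar s(\theta; Y)\}\,B^\tsp$ is positive definite by hypothesis (iii), completing the verification. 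The key ingredient is that hypothesis (ii) forces the eigenbasis to be coordinate-aligned on $N$, which is what endows $B$ with its block-diagonal form; without it, second- and first-order derivatives would be mixed across $N$ and $N^{\perp}$ and positive definiteness of $\tilde{\mathcal{I}}_1(\theta)$ would not follow from hypothesis (iii) alone.
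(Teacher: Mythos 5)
Your proposal is correct and follows essentially the same route as the paper: take $v_{\theta j}^n = e_j$ with $k_j = 2$ for the zero scale parameters and an orthonormal eigenbasis of the restriction to the complement with $k_j = 1$, note that orthogonality makes $\tilde{s}$ an invertible (block-orthogonal) linear image of $\bar{s}$ so positive definiteness transfers from hypothesis (iii), and verify condition (ii) from the null-space hypothesis together with the zero-mean score. Your matrix $B$ is exactly the paper's $V^\tsp$, so the arguments coincide.
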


  \begin{proof}[Proof of Proposition \ref{prop:assmp4}]
    Let $\{v_{\theta 1 }, \dots, v_{\theta d}\}$ be orthonormal eigenvectors of
    $\mathcal{I}(\theta)$ and $V = [v_{\theta 1 }, \dots, v_{\theta d}]$.
    Suppose the rank of $\mathcal{I}(\theta)$ is $d - d_*$, $0\leq d_* \leq d$.
    By condition (i), upon re-ordering $\theta$ if necessary, we may assume
    $v_{\theta j} = e_j$, $j = 1, \dots, d_*$. In the definition of
    $\tilde{s}_n(\theta; y^n)$, set $k_1 = \cdots = k_{d*} = 2$ and $k_{d* + 1}
    = \cdots k_d = 1$. Then, since $v_{\theta j}$ has zeros in the first $d_*$
    entries for $j \geq d_* + 1$ by orthogonality, $\tilde{s}_n(\theta; Y^n) =
    V^\tsp \sum_{i = 1}^n \bar{s}(\theta; Y_i)$. Thus,
    $\tilde{\mathcal{I}}_n(\theta) = n V^\tsp \cov_\theta\{\bar{s}(\theta;
    Y)\}V$, which is positive definite if and only if
    $\cov_\theta\{\bar{s}(\theta; Y)\}$ is; this shows condition (i) of
    Assumption \ref{assmp:pd} holds.

    Condition (ii) of Assumption \ref{assmp:pd} holds because $s_n(\tilde{\theta};
    y^n)^\tsp e_j = 0$ for almost every $y^n$ is equivalent to
    $\mathcal{I}_n(\tilde{\theta}) = n \mathcal{I}(\tilde{\theta})e_j = 0$, and
    this holds at any $\tilde{\theta}$ with $\tilde{\theta}_j = \theta_j = 0$ by
    the assumption that $\{e_j, j: \lambda_j = 0\}$ spans the null space of
    $\mathcal{I}(\theta)$.

    Condition (iii) of Assumption \ref{assmp:pd} holds because we have assumed
    $k_j \leq 2$, which completes the proof.
  \end{proof}

  In the following two sections, we verify the conditions of Theorems
  \ref{thm:cont} and \ref{thm:main} in two mixed models. The first is an
  exponential mixed model with independent and identically distributed
  observations of a vector of correlated, positive responses. It has one scale
  parameter, one fixed effect parameter, and a non-normal random effect. This
  example illustrates our theory in non-linear mixed models with non-standard
  random effect distributions. The second model is a quite general version of
  the linear mixed model with normally distributed random effects. It has
  general design matrices $X_i$ and $Z_i$, possibly different for different $i =
  1, \dots n$, and hence non-identically distributed observations; and several
  fixed and random effect parameters.

\subsection{Exponential mixed model with uniform random effect}
Suppose the $Y_i$, $i = 1, \dots, n$, are independent copies of a $Y \in \R{2}$
which has conditionally independent elements given $W \in \R{}$ with conditional
densities
\begin{equation}\label{eq:exp}
  f_\theta(y_j\mid w) = (\psi + \lambda w)\exp\{-y_j(\psi + \lambda w)\}\I(y_j
  \geq 0); ~~ \theta = (\lambda, \psi) \in \Theta \subseteq \R{2},
\end{equation}
where $y_j$ is the $j$th element of $y \in \R{2}$; we omit observation indexes
for the remainder of the section and work only with the generic $Y\in \R{2}$.
The density $f_\theta(y_j\mid w)$ is that of an exponential random variable with
mean $1 / (\psi + \lambda w)$. We have selected $r = 2$ responses to simplify
calculations, but $r \geq 2$ presents no fundamental difficulties. The
specification clearly requires $\psi + \lambda W$ be positive almost surely.
There are many potentially useful specifications satisfying this, but to be
concrete suppose $W$ is uniform on $(-\sqrt{3}, \sqrt{3})$, so that $\nu(\dd w)
= f(w)\, \dd w$ with $f(w) = 12^{-1/2}\I(\vert w\vert \leq \sqrt{3})$; and the
parameter set is $\Theta = \{(\lambda, \psi) \in [0, \infty) \times \R{}: \psi >
\sqrt{3}\lambda \}$. The log-likelihood for one observation is, ignoring
additive constants,
\begin{align*}
  \log f_\theta(y) = \log \int f_\theta(y\mid w) \,\nu(\dd w) = \log \int (\psi + \lambda
  w)^2\exp\{-(\psi + \lambda w)y_{\bullet}\} \, \nu(\dd w),
\end{align*}
where $y_\bullet = y_1 + y_2$. The score for $\lambda$ is
\[
  s_\lambda(\theta; y) = -\frac{1}{f_\theta(y)}\int f_\theta(y \mid
  w)\{y_\bullet - 2 / (\psi + \lambda w)\}w\,\nu(\dd w).
\]
\begin{lem} \label{lem:exp_score_crit}
  In the exponential mixed model \eqref{eq:exp}, it holds for any $\theta = (0, \psi) \in \Theta$ that
  $\var_\theta\{s_\lambda(\theta, Y)\} = 0$ and $\var_\theta\{\nabla_1^2 \ell(\theta; Y)\} > 0$.
\end{lem}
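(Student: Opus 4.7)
The plan is to prove both claims by direct calculation, exploiting the fact that at $\lambda = 0$ the conditional density $f_\theta(y\mid w) = \psi^2 \exp(-\psi y_\bullet)$ no longer depends on $w$. This causes $w$ to factor out of every integral against $\nu$, and since $\int w \,\nu(\dd w) = 0$ and $\int w^2 \,\nu(\dd w) = 1$, the computations reduce cleanly.

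For the first claim, I would observe that at $\theta = (0, \psi)$ the expression for $s_\lambda$ given before the lemma becomes
\[
  s_\lambda((0,\psi); y) = -\frac{1}{f_{(0,\psi)}(y)} f_{(0,\psi)}(y) (y_\bullet - 2/\psi) \int w \, \nu(\dd w) = 0
\]
for every $y$, since the factor $f_\theta(y\mid w)(y_\bullet - 2/(\psi + \lambda w))$ is independent of $w$ when $\lambda = 0$. Hence the score is zero identically on $\R{2}$ and its variance vanishes.

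For the second claim, write $s_\lambda(\theta; y) = -g(\theta; y)/f_\theta(y)$ with $g(\theta; y) = \int f_\theta(y\mid w)(y_\bullet - 2/(\psi + \lambda w)) w \,\nu(\dd w)$. The previous step gave $g((0,\psi); y) = 0$, so the quotient rule collapses to $\nabla_1 s_\lambda((0,\psi); y) = -\nabla_\lambda g((0,\psi); y)/f_{(0,\psi)}(y)$. Differentiation under the integral is justified because the integrand and all relevant partial derivatives are polynomial in $w$ multiplied by bounded exponentials, integrated over the bounded interval $(-\sqrt{3}, \sqrt{3})$. Carrying out $\nabla_\lambda$ under the integral and evaluating at $\lambda = 0$ produces an integrand of the form $f_{(0,\psi)}(y)$ times an expression of the form $a(y) w + b(y) w^2$; after integration the $w$ term drops out and the $w^2$ term integrates to $1$, yielding, after dividing by $f_{(0,\psi)}(y)$, an expression of the shape $(y_\bullet - 2/\psi)^2 - 2/\psi^2$.

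Finally, under $\theta = (0, \psi)$ the random variable $Y_\bullet = Y_1 + Y_2$ is a sum of two independent $\text{Exp}(\psi)$ variables and thus has a $\text{Gamma}(2, \psi)$ distribution, so $\psi Y_\bullet - 2$ has a computable and strictly positive fourth central moment; the resulting variance of $\nabla_1^2 \ell((0,\psi); Y)$ is a strictly positive multiple of $\psi^{-4}$. The main obstacle is essentially bookkeeping — justifying differentiation under the integral and carrying out the quotient-rule derivatives cleanly — rather than any conceptual difficulty, since the explicit exponential form and the bounded support of $W$ make the moment computations and dominated-convergence arguments straightforward.
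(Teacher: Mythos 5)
Your proposal is correct and follows essentially the same route as the paper's proof: at $\lambda = 0$ the conditional density no longer depends on $w$, so the score factors as a constant times $\int w\,\nu(\dd w) = 0$, and differentiating once more (your quotient-rule step with $g((0,\psi);y)=0$ is the same computation the paper does by differentiating the score given $s_\lambda = 0$) yields $\nabla_1^2\ell((0,\psi);y) = (y_\bullet - 2/\psi)^2 - 2/\psi^2$ up to sign, which has positive variance. Your extra observation that $Y_\bullet \sim \mathrm{Gamma}(2,\psi)$ under $(0,\psi)$ just makes explicit the positivity claim the paper leaves as immediate.
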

\begin{proof}
  Setting $\theta = (0, \psi)$ in the expression for $s_\lambda(\theta; y)$ and
  moving terms that do not depend on $w$ outside the integral gives
  $s_\lambda(0, \psi; y) = -f_\theta(y)^{-1}\psi \exp(-\psi y_\bullet)\{y_\bullet
  - 2 / \psi\}  \int w \, \nu(\dd w) = 0$. Moreover, when $s_\lambda(\theta; y) = 0$,
  \[
    \nabla_1^2 \log f_\theta(Y)= -
    \frac{1}{f_\theta(Y)}\int f_\theta(Y\mid
    w)[\{Y_\bullet - 2/(\psi + \lambda w)\}^2 - 2 / (\psi +\lambda w)^2]w^2 \,\nu(\dd w),
  \]
  where $\log f_\theta(Y)$ means $f_\theta$ evaluated at $Y \sim f_\theta$. When
  $\lambda = 0$ this simplifies to $(Y_\bullet - 2/\psi)^2 - 2/ \psi^2$, which
  has positive variance under $\theta = (0, \psi)$.
\end{proof}

Lemma \ref{lem:exp_score_crit} shows $\theta = (0, \psi)$ is a critical point
for any $\psi$, agreeing with Corollary \ref{corol:indep_crit}. It also shows
the corresponding second derivative of the log-likelihood has positive variance
if $\lambda = 0$, suggesting it may be possible to verify Assumption
\ref{assmp:pd} with $k_1 = 2$ at $\theta$ with $\lambda = 0$ and $k_1 = 1$
elsewhere. The following lemma will be helpful to that end.

\begin{lem} \label{lem:exp_finf}
  In the exponential mixed model \eqref{eq:exp}, the information matrix
  $\mathcal{I}(\theta)$ has rank one if $\lambda = 0$ and rank two otherwise.
\end{lem}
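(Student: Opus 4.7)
The plan is to treat the cases $\lambda = 0$ and $\lambda \neq 0$ separately; since $\mathcal{I}(\theta) \in \R{2\times 2}$, they amount to exhibiting a one-dimensional null space and positive definiteness, respectively.

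For $\lambda = 0$, Lemma \ref{lem:exp_score_crit} already gives $\var_\theta\{s_\lambda(\theta; Y)\} = 0$, and since the score has mean zero this forces the first row and column of $\mathcal{I}(\theta)$ to vanish, so $\mathcal{I}(\theta) e_1 = 0$ and the rank is at most one. To rule out rank zero, I would observe that $f_\theta(y\mid w)$ does not depend on $w$ when $\lambda = 0$, so $Y_1$ and $Y_2$ are marginally independent $\mathrm{Exp}(\psi)$ variables and $s_\psi(\theta; Y) = 2/\psi - Y_\bullet$ with $Y_\bullet \sim \mathrm{Gamma}(2, \psi)$, giving $\var_\theta\{s_\psi(\theta; Y)\} = 2/\psi^2 > 0$.

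For $\lambda \neq 0$ I would show directly that $v^\tsp s(\theta; Y) = 0$ almost surely forces $v = (v_1, v_2)^\tsp = 0$. Cancelling $f_\theta(y)$ in $v_1 s_\lambda + v_2 s_\psi$ and using that the integrand depends on $y$ only through $t = y_\bullet$, the condition becomes
\[
\int (\psi + \lambda w)^2 e^{-(\psi + \lambda w) t}\{2/(\psi + \lambda w) - t\}(v_1 w + v_2)\,\nu(\dd w) = 0, \quad t \geq 0.
\]
Substituting $u = \psi + \lambda w$ (which maps $[-\sqrt{3}, \sqrt{3}]$ onto $[a, b] = [\psi - \sqrt{3}\lambda, \psi + \sqrt{3}\lambda]$) and setting $c = v_2 \lambda - v_1 \psi$ and $M_k(t) = \int_a^b u^k e^{-ut}\,\dd u$ recasts this as
\[
2 v_1 M_2(t) + 2 c M_1(t) - v_1 t M_3(t) - c t M_2(t) = 0, \quad t \geq 0.
\]

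The crux is the identity $M_{k+1} = -M_k'$ together with $2 M_k + t M_k' = t^{-1}(t^2 M_k)'$, which rewrites the left-hand side above as $t^{-1}\frac{d}{dt}\{v_1 t^2 M_2(t) + c t^2 M_1(t)\}$. Integrating and using that $t^2 M_k(t) \to 0$ as $t \to \infty$ (because each $M_k$ decays exponentially) yields $v_1 M_2(t) + c M_1(t) = 0$ for every $t > 0$, i.e., the Laplace transform of $u(v_1 u + c)\,\I_{[a,b]}(u)$ vanishes on a half-line. Uniqueness of the Laplace transform then forces $u(v_1 u + c) \equiv 0$ on $[a, b]$, and since $a > 0$ this yields $v_1 = c = 0$; with $\lambda \neq 0$ the relation $c = v_2 \lambda = 0$ gives $v_2 = 0$. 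The main technical obstacle is spotting the integrating factor that collapses the two $t$-dependent terms into a total derivative; without it one would be left to analyse a two-parameter family of Laplace-type identities directly, which is considerably less tractable.
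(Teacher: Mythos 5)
Your proof is correct, and for the rank-two case it takes a genuinely different route from the paper. For $\lambda=0$ the two arguments coincide (zero variance of $s_\lambda$ from Lemma \ref{lem:exp_score_crit} plus positive variance of $s_\psi=2/\psi-Y_\bullet$). For $\lambda>0$ the paper changes variables \emph{inside the likelihood integral} ($t=(\psi+\lambda w)y_\bullet$) to get a closed form for $\log f_\theta(y)$ in terms of $G(t)=e^{-t}(t^2+2t+2)$, and then reduces $\mathcal{I}(\theta)v=0$ to the non-constancy of an explicit ratio of functions of $t=y_\bullet$, a verification it declares routine and omits. You instead substitute $u=\psi+\lambda w$ directly in the score combination, recognize $2M_k+tM_k'=t^{-1}(t^2M_k)'$ so that the condition collapses to $\tfrac{d}{dt}\{v_1t^2M_2+ct^2M_1\}=0$, integrate using the exponential decay of $M_k$ (valid because $a=\psi-\sqrt{3}\lambda>0$), and finish by injectivity of the Laplace transform of the compactly supported function $u(v_1u+c)\I_{[a,b]}(u)$. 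This buys a fully explicit non-degeneracy argument — in effect you supply the step the paper leaves to the reader — at the cost of invoking Laplace-transform uniqueness (harmless here, since the transform of a compactly supported integrable function is entire, so vanishing on $(0,\infty)$ forces it to vanish identically). One small point to make explicit: $\mathcal{I}(\theta)v=0$ gives $v^\tsp s(\theta;Y)$ equal to a \emph{constant} almost surely, not zero; you should note the constant is zero because the score has mean zero in this model (differentiation under the integral is justified by the moment bounds verified for this model, or by direct computation), or else carry the constant through — the paper sidesteps this by phrasing its condition as "constant on $(0,\infty)$". With that remark added, your argument is complete.
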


\begin{proof}
  The score for $\psi$ is $s_\psi(\theta; y) = -{f_\theta(y)}^{-1}\int
  f_\theta(y \mid w)\{y_\bullet - 2 / (\psi + \lambda w)\}\, \nu(\dd w)$. Thus, when
  $\lambda = 0$, $s_\psi(\theta; Y) = Y_\bullet - 2/\psi$ which has positive
  variance. In conjunction with Lemma \ref{lem:exp_score_crit}, this shows the
  rank of $\mathcal{I}(\theta)$ is one when $\lambda = 0$. Suppose $\lambda > 0$
  and make the change of variables $t = (\psi + \lambda w)y_\bullet$ in the
  integral in the definition of the log-likelihood. Letting $G(t) = e^{-t}(t^2 +
  2t + 2)$, which is an antiderivative of $g(t) = -t^2e^{-t}$, gives \[ \log
  f_\theta(y) = -\log(\lambda)+ \log \{G(\psi - \sqrt{3}y_\bullet \lambda) -
  G(\psi + \sqrt{3}y_\bullet \lambda)\}. \] Differentiating with respect to
  $\lambda$ and $\psi$, taking an arbitrary linear combination given by $v =
  [v_1, v_2]^\tsp \in \R{2}$, and observing the (Lebesgue) density for
  $Y_\bullet$ is positive on $(0, \infty)$ shows $\mathcal{I}(\theta)v = 0$ only
  if
  \[
    t\mapsto \frac{v_1\lambda^{-1} t\{g(\psi - t) + g(\psi + t)\} - v_2\{g(\psi
    - t) - g(\psi + t)\}}{G(\psi - t) - G(\psi + t)}
  \]
  is constant on $(0, \infty)$, possibly except on a Lebesgue null set.
  Verifying this map is indeed non-constant on a set of positive Lebesgue
  measure is routine so we omit the details.
\end{proof}

We are ready to verify Assumptions \ref{assmp:support}--\ref{assmp:dense},
giving the main result of the section.

\begin{thm}
  The conclusions of Theorems \ref{thm:cont} and \ref{thm:main}
  hold in the exponential mixed model \eqref{eq:exp}.
\end{thm}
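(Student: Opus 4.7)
The plan is to verify Assumptions \ref{assmp:support}--\ref{assmp:dense} in order, leaning on the structural results already proved (Corollary \ref{corol:indep_crit}, Proposition \ref{prop:assmp4}, Lemmas \ref{lem:exp_score_crit} and \ref{lem:exp_finf}), so that Theorems \ref{thm:cont} and \ref{thm:main} apply.

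First, Assumption \ref{assmp:support} is immediate: since $W$ is supported on $(-\sqrt 3, \sqrt 3)$ and $\psi > \sqrt 3 \lambda \geq 0$ on $\Theta$, the conditional density $f_\theta(y\mid w) = (\psi + \lambda w)^2 \exp\{-(\psi + \lambda w) y_\bullet\}$ is strictly positive on $(-\sqrt 3, \sqrt 3) \times [0,\infty)^2$, so the marginal $f_\theta(y)$ is strictly positive on $[0,\infty)^2$ for every $\theta \in \Theta$, giving common null sets. Assumption \ref{assmp:loew_order} holds with $c_1 = c_2 \equiv 1$ by the i.i.d. assumption. For Assumption \ref{assmp:dense}, the critical set is $\{(0,\psi): \psi > 0\}$ by Lemma \ref{lem:exp_finf}, and for any such critical point we can approach it along $\theta_m = (1/m, \psi)$, which lies in $\Theta$ and is non-critical for all sufficiently large $m$.

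For Assumption \ref{assmp:pd}, the plan is to apply Proposition \ref{prop:assmp4} with $k = 2$. Lemma \ref{lem:exp_finf} says $\mathcal I(\theta)$ has rank one exactly when $\lambda = 0$, and Lemma \ref{lem:exp_score_crit} identifies the kernel direction as $e_1$ at such points; so condition (ii) of Proposition \ref{prop:assmp4} holds. For condition (iii), at non-critical $\theta$ we have $\cov_\theta\{\bar s(\theta; Y)\} = \mathcal I(\theta)$, positive definite by Lemma \ref{lem:exp_finf}. At a critical $\theta = (0,\psi)$, $\bar s(\theta;Y) = [\nabla_1^2 \ell(\theta;Y),\, \nabla_2 \ell(\theta;Y)]^\tsp$, and a direct calculation using the simplifications afforded by $\lambda = 0$ (which makes the integrals against $\nu$ factor in the manner exploited in the proof of Lemma \ref{lem:exp_score_crit}) gives $\nabla_1^2 \ell(\theta; Y) = Y_\bullet^2 - 4 Y_\bullet/\psi + 2/\psi^2$ and $\nabla_2 \ell(\theta; Y) = Y_\bullet - 2/\psi$ with $Y_\bullet \sim \text{Gamma}(2, \psi)$; their joint distribution places positive mass in two functionally independent directions, so the covariance is positive definite.

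The main obstacle, and the only step that is not essentially immediate from earlier results, is Assumption \ref{assmp:deriv}: establishing joint continuity of partial derivatives up to order two in $(\theta, y)$ and verifying the $(2+\delta)$-moment bound locally uniformly in $\theta$. The strategy is to exploit that $W$ is bounded and $\psi + \lambda W$ is bounded away from $0$ on any compact $B \cap \Theta$, so the integrand $(\psi + \lambda w)^2 \exp\{-(\psi + \lambda w) y_\bullet\}$ and all its partial derivatives up to order two in $\theta$ are bounded by $C(1 + y_\bullet)^4 \exp(-c y_\bullet)$ for constants $C, c > 0$ uniform over $B$. This yields joint continuity of $(\theta,y) \mapsto f_\theta(y)$ and its derivatives by dominated convergence, and a uniform lower bound $f_\theta(y) \geq c' \exp(-c'' y_\bullet)$. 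Consequently, each partial derivative of $\ell(\theta; y)$ up to order two is bounded in absolute value by a polynomial in $y_\bullet$; taking $\delta = 1$, the required integrals $\int|\cdot|^3 f_\theta(y)\,\dd y$ are finite and locally uniformly bounded because the exponential factor of $f_\theta$ dominates any polynomial. With Assumption \ref{assmp:deriv} in hand, all conditions are verified and the conclusions of Theorems \ref{thm:cont} and \ref{thm:main} follow.
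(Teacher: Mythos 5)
Your verifications of Assumptions \ref{assmp:support}, \ref{assmp:loew_order}, \ref{assmp:pd} (via Proposition \ref{prop:assmp4} together with Lemmas \ref{lem:exp_score_crit} and \ref{lem:exp_finf}) and \ref{assmp:dense} are correct and essentially the same as the paper's. The gap is in the step you yourself identify as the main obstacle, Assumption \ref{assmp:deriv}. You bound the $\theta$-derivatives of $f_{\tilde\theta}(y)$ from above by $C(1+y_\bullet)^4 e^{-c y_\bullet}$ and $f_{\tilde\theta}(y)$ from below by $c' e^{-c'' y_\bullet}$, and then assert that "consequently" the partial derivatives of $\ell$ are polynomially bounded in $y_\bullet$. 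But in your upper bound $c$ is necessarily the \emph{minimum} of $\tilde\psi+\tilde\lambda w$ over the ball and $w \in (-\sqrt{3},\sqrt{3})$, while in your lower bound $c''$ is the \emph{maximum}, so the quotient of the two bounds is of order $(1+y_\bullet)^4 e^{(c''-c)y_\bullet}$ with $c''-c \approx 2\sqrt{3}\lambda' > 0$ at any non-critical $\theta'$; the claimed polynomial bound does not follow. Moreover, the ensuing third-moment integral $\int |\cdot|^3 f_\theta(y)\,\dd y$ would then require $3(c''-c)$ to be smaller than the decay rate $\psi-\sqrt{3}\lambda$ of $f_\theta$, which fails on a substantial part of $\Theta$ (e.g.\ $\psi'$ close to $\sqrt{3}\lambda'$), so the argument as written does not establish the locally uniform moment condition except near $\lambda'=0$.

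The missing idea is the normalization the paper uses: since $f_{\tilde\theta}(y\mid w)f(w)/f_{\tilde\theta}(y) = f_{\tilde\theta}(w\mid y)$ is a probability density in $w$, each derivative of $\ell$ is a conditional expectation (or a product of such) of quantities like $\{y_\bullet - 2/(\tilde\psi+\tilde\lambda w)\}w^k$, hence bounded by the supremum over the compact range of $w$; with $1/(\tilde\psi+\tilde\lambda w) \le M$ on a small ball this gives a bound that is genuinely polynomial in $y_\bullet$ with no exponential factor, and the exponential tails of $Y_\bullet$ under $\theta \in B\cap\Theta$ then yield the uniform $(2+\delta)$-moment bound (the paper implements this with Jensen's inequality). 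Your final claim is true, but your route does not prove it — the cancellation between numerator and denominator must happen inside the integral over $w$, not after bounding them separately. Two minor further points: the moment condition must hold for $\tilde\theta$ ranging over the full ball $B$ (an extension beyond $\Theta$), not just $B\cap\Theta$ — harmless here because $\tilde\psi+\tilde\lambda w$ stays bounded away from zero on a small enough ball around any $\theta'\in\Theta$, but it should be said; and your positive-definiteness argument at critical points should be phrased as in the paper, namely that no nonzero linear combination $v_1\{(y_\bullet-2/\psi)^2-2/\psi^2\}+v_2(y_\bullet-2/\psi)$ is almost surely constant, which is immediate since $Y_\bullet$ has a continuous distribution.
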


\begin{proof}
  Assumption \ref{assmp:support} holds with $\gamma$ being Lebesgue measure
  since $f_\theta(y) > 0$ for all $y \in (0, \infty)^2$ and $\theta \in \Theta$.

  To verify the moment condition in Assumption \ref{assmp:deriv}, it suffices by
  Lemma \ref{lem:exp_finf} to find locally uniform bounds of $\int \vert
  s_\lambda(\tilde{\theta}; y)\vert^3 f_\theta(y)\,\dd y$ and $\int \vert
  s_\psi(\tilde{\theta}; y)\vert^3 f_\theta(y)\,\dd y$ around an arbitrary
  $\theta' \in \Theta$, and of $\int \vert \nabla_1^2 \log f_{\tilde{\theta}}(y)
  \vert^{3} f_\theta(y)\, \dd y$ around $\theta'$ with $\lambda' = 0$. For the
  former, observe that for any $k \geq 1$, by Jensen's inequality and using
  $f_\theta(y\mid w) f(w) / f_\theta(y) = f_\theta(w\mid y)$
  \begin{align*}
    \int \vert s_\lambda(\tilde{\theta}; y)\vert^k f_\theta(y) \dd y &=
    \int \left\vert \int \{y_\bullet
    - 2 / (\tilde{\psi} + \tilde{\lambda} w)\}w \frac{f_{\tilde{\theta}}(y \mid w) f(w)}{f_{\tilde{\theta}}(y)}\,
    \dd w \right\vert^k f_\theta(y)\,\dd y \\
    &\leq  \int \int \left\vert
    \{y_\bullet - 2 / (\tilde{\psi} + \tilde{\lambda} w)\}w \right\vert^k
    f_{\tilde{\theta}}(w\mid y)\, \dd w\, f_\theta(y)\, \dd y.
  \end{align*}
  Now for any $\theta' \in \Theta$, since $\psi > \lambda \sqrt{3} \geq \lambda
  \vert w\vert$, we can find an $M < \infty$ and a ball $B$ centered at $\theta'$ small
  enough that $1 / (\psi + \lambda w) \leq M$ for all $(\psi, \lambda) \in B$
  and $w \in (-\sqrt{3}, \sqrt{3})$. Then for $\tilde{\theta} \in B$, by
  Jensen's inequality (twice), the inner integral less than $3^{k/2}2^{k -
  1}(\vert y_\bullet \vert^k + \vert 2 M\vert^k) \leq 3^{k/2}2^{k - 1}\{2^{k -
  1}(\vert y_1\vert^k + \vert y_2\vert^k) + \vert 2 M\vert^k)\}$. Thus, the last
  line in the last display is bounded uniformly for $(\tilde{\theta}, \theta)
  \in B \times B$ if $\E_\theta(Y_j^k)$ is, $j = 1, 2$. But for $ \theta \in
  B$, $\E_\theta(Y_j^k) = \E_\theta\{ \E_\theta(Y_j^k \mid W)\} = k!\E\{1 /
  (\psi + \lambda W)^k\} \leq k! M^k$. The other moment bounds can be handled
  similarly to show Assumption \ref{assmp:deriv} holds; we omit the details.

  Assumption \ref{assmp:loew_order} holds because the $Y_i$ are identically
  distributed.

  We use Proposition \ref{prop:assmp4} to verify Assumption \ref{assmp:pd}.
  Condition (i) of that proposition is by assumption and condition (ii) is
  established in Lemma \ref{lem:exp_finf}. To verify condition (iii), it
  suffices by Lemma \ref{lem:exp_finf} to show that, when $\lambda = 0$, $v_1
  \nabla_1^2 \log f_\theta(y) + v_2 s_\psi(\theta; y)$ is constant for almost
  every $y$ only if $v_1 = v_2 = 0$; this clearly holds since the linear
  combination is $v_1(y_\bullet - 2/\psi)^2 - 2 v_1/\psi^2 + v_2 (y_\bullet -
  2\psi)$.

  Assumption \ref{assmp:dense} holds because the only critical points are those
  with $\lambda = 0$, and this completes the proof.
\end{proof}

\subsection{Linear mixed models} \label{sec:lmm}

Consider a linear mixed model which assumes, for some $\psi \in \R{d_2}$ and
$\Lambda$ satisfying \eqref{eq:indep_scale}, independently for $i = 1, \dots, n$,
\begin{equation}\label{eq:lmm}
  Y_i \mid X_i, W_i \sim \rN(X_i \psi + Z_i \Lambda W_i, \sigma^2 I_{r_i}),
  \quad W_i\mid X_i \sim \rN(0, I_{r_i}),
\end{equation}
where $Z_i \in \R{r_i\times q}$ is a design matrix and $X_i \in \R{r_i\times
d_2}$ a matrix of predictors. The predictors can be non-stochastic or, more
generally, have a distribution not depending on $\theta$, possibly different for
different $i$. Assume also for simplicity that $\sigma^2 > 0$ is known. When
$\sigma^2 = 1$ and observations are identically distributed, this model is a
special case of the generalized linear mixed model in Example \ref{ex:glmm}.

We will first obtain a reliable confidence region for $\theta = (\lambda, \psi)
\in \Theta = [0, \infty)^{d_1} \times \R{d_2}$ by verifying the conditions of
Theorem \ref{thm:main}. Then we show how that result can be modified to give a
reliable confidence region for $\lambda$ only, with $\psi$ a nuisance parameter.
The model implies the distribution of $Y_i \mid X_i$ is multivariate normal with
mean $X_i \psi$ and covariance matrix
\[
  \Sigma_i = \Sigma_i(\lambda) = \sigma^2 I_{r_i} + \sum_{j = 1}^{d_1} \lambda_j^2
  H^i_{j},
\]
where $H^i_j = \sum_{k \in [j]} Z_i^k (Z_i^k)^\tsp \in \R{r_i\times r_i}$ is the
sum of outer products of columns of $Z_i$ corresponding to random effects scaled
by $\lambda_j$, $j = 1, \dots, d_1$. The log-likelihood is, ignoring additive
terms not depending on $\theta$,
\[
  \ell_n(\theta; y^n, X^n) = \sum_{i = 1}^n \left\{-\frac{1}{2}\log
  \vert\Sigma_i(\lambda) \vert - \frac{1}{2}(y_i - X_i \psi)^\tsp
  \Sigma_i(\lambda)^{-1}(y_i - X_i\psi)\right\},
\]
where $X^n = (X_1, \dots, X_n)$.
Differentiating with respect to $\lambda_j$ gives, for $j = 1, \dots, d_1$,
\begin{align*}
  s^i_j(\theta; y_i, X_i) =
  \lambda_j \tr\left\{\Sigma_i^{-1} H^i_j - \Sigma_i^{-1}(y_i - X_i \psi)(y_i - X_i
  \psi)^\tsp \Sigma_i^{-1} H^i_j \right\},
\end{align*}
which is equal to zero for every $i$ when $\lambda_j = 0$. Thus, scale
parameters at zero are critical points, agreeing with Corollary
\ref{corol:indep_crit}. We also have the following partial converse, essentially
saying that only scale parameters can be critical elements.

\begin{lem}\label{lem:blockdiag}
  The Fisher information $\mathcal{I}^i(\theta)$ for one observation in the
  linear mixed model \eqref{eq:lmm} is block-diagonal and, if
  $\emin\{\E(X_i^\tsp X_i)\} > 0$, is singular if and only if its leading block
  $\mathcal{I}^i_\lambda(\theta) = \cov_\theta\{ s^i_\lambda(\theta; Y_i,
  X_i)\}$ is, where $\emin(\cdot)$ is the smallest eigenvalue.
\end{lem}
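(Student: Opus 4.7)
The plan is to exploit the fact that the score for the fixed-effect parameter $\psi$ is linear in the residuals while the score for the scale parameters $\lambda$ is a centered quadratic form. First I would compute
\[
  s^i_\psi(\theta; y_i, X_i) = X_i^\tsp \Sigma_i(\lambda)^{-1}(y_i - X_i\psi)
\]
directly from the log-likelihood, and record that the expression for $s^i_\lambda$ already displayed above has $j$th element $\lambda_j \tr\{\Sigma_i^{-1} H^i_j - \Sigma_i^{-1} R_i R_i^\tsp \Sigma_i^{-1} H^i_j\}$, where $R_i = Y_i - X_i\psi$.

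To prove block diagonality of $\mathcal{I}^i(\theta)$ with respect to the $(\lambda, \psi)$ partition, I would condition on $X_i$. Under $\pr_\theta$, $R_i \mid X_i \sim \rN(0, \Sigma_i(\lambda))$, so $R_i$ and $-R_i$ have the same conditional law given $X_i$. The entries of $s^i_\psi$ are odd in $R_i$ while those of $s^i_\lambda$ are even, so the entries of $s^i_\lambda (s^i_\psi)^\tsp$ are odd polynomials in $R_i$; their conditional expectations given $X_i$ therefore vanish by symmetry. Because $\E(s^i_\psi \mid X_i) = 0$ and $\E(s^i_\lambda \mid X_i) = 0$, the law of total covariance yields $\cov_\theta\{s^i_\lambda, s^i_\psi\} = \E\{\cov_\theta(s^i_\lambda, s^i_\psi \mid X_i)\} = 0$, which is the stated block diagonality.

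For the equivalence of singularity, I would show that the $\psi$-block is positive definite under the hypothesis $\emin\{\E(X_i^\tsp X_i)\} > 0$. A direct computation gives $\cov_\theta(s^i_\psi \mid X_i) = X_i^\tsp \Sigma_i(\lambda)^{-1} X_i$, so $\mathcal{I}^i_\psi(\theta) = \E\{X_i^\tsp \Sigma_i(\lambda)^{-1} X_i\}$ by the law of total covariance. For any fixed $\theta$, $\Sigma_i(\lambda) = \sigma^2 I_{r_i} + \sum_j \lambda_j^2 H^i_j$ has a finite largest eigenvalue $M(\theta) < \infty$, so $\Sigma_i(\lambda)^{-1} \succeq M(\theta)^{-1} I_{r_i}$. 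Hence $\mathcal{I}^i_\psi(\theta) \succeq M(\theta)^{-1} \E(X_i^\tsp X_i)$, which is positive definite. Since $\mathcal{I}^i(\theta) = \diag\{\mathcal{I}^i_\lambda(\theta), \mathcal{I}^i_\psi(\theta)\}$ and the $\psi$-block is non-singular, $\mathcal{I}^i(\theta)$ is singular if and only if $\mathcal{I}^i_\lambda(\theta)$ is.

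The main obstacle is the block-diagonality step; once that is in hand the remainder is routine linear algebra. In particular, care is needed that the symmetry argument be stated conditionally on $X_i$, so that the potentially random design matrix does not interfere with the vanishing of the odd conditional moment.
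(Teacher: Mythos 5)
Your proof is correct: the cross block $\cov_\theta\{s^i_\lambda, s^i_\psi\}$ vanishes because, conditionally on $X_i$, the residual $R_i$ is centered Gaussian so the odd-in-$R_i$ products have zero conditional mean, and the $\psi$-block $\E\{X_i^\tsp \Sigma_i(\lambda)^{-1} X_i\} \succeq \emax\{\Sigma_i(\lambda)\}^{-1}\E(X_i^\tsp X_i)$ is positive definite under the hypothesis, so singularity of $\mathcal{I}^i(\theta)$ reduces to singularity of $\mathcal{I}^i_\lambda(\theta)$. This is essentially the same (standard) argument the paper uses, with the conditioning on $X_i$ correctly handling stochastic predictors.
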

The lemma holds as stated with non-stochastic predictors, but notation can be
simplified since $\E(X_i^\tsp X_i) = X_i^\tsp X_i$ in that case. The following
lemma gives a lower bound on the (conditional) variance of any linear
combination of the score for the scale parameters. Together with the previous
lemma, it can be used to identify all critical points.

\begin{lem} \label{lem:lmm_varbound}
 In the linear mixed model \eqref{eq:lmm}, for any $v \in \R{d_1}$,
 \[
  \var_\theta\{v^\tsp s^i_\lambda(\theta; Y_i, X_i)\mid X_i\} \geq 2
  \emax(\Sigma_i)^{-2} \emin(Z_i^\tsp Z_i)^2 \max_{j} (\lambda_j v_j)^2
 \]
 and
 \[
 \var_\theta\{v^\tsp s^i_\lambda(\theta; Y_i, X_i)\mid X_i\} \leq 2 r_i \emin(\Sigma_i)^{-2}
 \emax(Z_i^\tsp Z_i)^2 \max_{j} (\lambda_j v_j)^2,
 \]
 where $\emax(\cdot)$ is the largest
 eigenvalue.
\end{lem}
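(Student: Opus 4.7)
The starting point is to note that, conditionally on $X_i$, $\tilde Y_i := Y_i - X_i\psi \sim \rN(0, \Sigma_i)$, and the score for the scale parameters, as given in the excerpt, rearranges into
\[
  v^\tsp s^i_\lambda(\theta; Y_i, X_i) = \sum_{j=1}^{d_1} v_j \lambda_j \tr(\Sigma_i^{-1} H^i_j) - \tilde Y_i^\tsp \Sigma_i^{-1} M \Sigma_i^{-1} \tilde Y_i,
\]
where $M = M(X_i, \lambda, v) = \sum_{j = 1}^{d_1} v_j \lambda_j H^i_j$ is symmetric. Only the quadratic form in $\tilde Y_i$ contributes to the conditional variance, and the classical formula $\var(Z^\tsp A Z) = 2\tr(A\Sigma A \Sigma)$ for $Z \sim \rN(0, \Sigma)$ and symmetric $A$, applied with $A = \Sigma_i^{-1} M \Sigma_i^{-1}$, yields
\[
  \var_\theta\{v^\tsp s^i_\lambda(\theta; Y_i, X_i) \mid X_i\} = 2 \tr(\Sigma_i^{-1} M \Sigma_i^{-1} M).
\]
This reduces both bounds to bounds on $\tr(\Sigma_i^{-1} M \Sigma_i^{-1} M)$.

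Next, I would sandwich $\Sigma_i^{-1}$ by scalar multiples of the identity, $\emax(\Sigma_i)^{-1} I \preceq \Sigma_i^{-1} \preceq \emin(\Sigma_i)^{-1} I$, which (since $M \Sigma_i^{-1} M \succeq 0$) gives
\[
  \emax(\Sigma_i)^{-2} \|M\|_F^2 \leq \tr(\Sigma_i^{-1} M \Sigma_i^{-1} M) \leq \emin(\Sigma_i)^{-2} \|M\|_F^2.
\]
To turn these into the stated bounds, rewrite $M = Z_i D Z_i^\tsp$, where $D = \diag(v_{(1)}\lambda_{(1)}, \dots, v_{(q)}\lambda_{(q)})$ in the notation of \eqref{eq:indep_scale}, so that $D_{kk}^2$ ranges over the values $(v_j\lambda_j)^2$ as $k$ varies. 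Writing $A = Z_i^\tsp Z_i$ and using that $A$ is symmetric, $\|M\|_F^2 = \tr(DADA) = \sum_{k,l} D_{kk} D_{ll} A_{kl}^2$.

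For the upper bound I would use $|D_{kk} D_{ll}| \leq \max_k D_{kk}^2 = \max_j (\lambda_j v_j)^2$ and $\|A\|_F^2 \leq \operatorname{rank}(A)\emax(A)^2 \leq r_i \emax(Z_i^\tsp Z_i)^2$, giving $\|M\|_F^2 \leq r_i \emax(Z_i^\tsp Z_i)^2 \max_j(\lambda_j v_j)^2$. For the lower bound I would use the vectorization identity
\[
  \|M\|_F^2 = \vecop(D)^\tsp (Z_i^\tsp Z_i \otimes Z_i^\tsp Z_i)\vecop(D),
\]
together with the fact that the smallest eigenvalue of a Kronecker square is $\emin(Z_i^\tsp Z_i)^2$, to conclude $\|M\|_F^2 \geq \emin(Z_i^\tsp Z_i)^2 \|D\|_F^2 \geq \emin(Z_i^\tsp Z_i)^2 \max_j (\lambda_j v_j)^2$; the bound is trivial when $Z_i$ is rank-deficient. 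Combining these with the sandwich inequality completes the proof.

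The main technical obstacle is the lower bound on $\|M\|_F^2$: because $D$ need not be positive semi-definite when the $v_j$ have mixed signs, the appealing route of writing $\|Z_iDZ_i^\tsp\|_F$ as $\|D^{1/2}Z_i^\tsp Z_i D^{1/2}\|_F$ is unavailable, and the cross-terms in $\sum_{k,l} D_{kk}D_{ll} A_{kl}^2$ may partially cancel. The Kronecker-product reformulation sidesteps this by converting the question into an eigenvalue inequality for a genuinely positive semi-definite matrix, which is what pins down the factor $\emin(Z_i^\tsp Z_i)^2$ rather than a weaker one.
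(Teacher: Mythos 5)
Your proof is correct and follows essentially the same route as the paper's: condition on $X_i$, observe that $v^\tsp s^i_\lambda$ is a constant minus the Gaussian quadratic form $\tilde Y_i^\tsp\Sigma_i^{-1}M\Sigma_i^{-1}\tilde Y_i$ with $M=Z_iDZ_i^\tsp$, apply $\var(Z^\tsp AZ)=2\tr(A\Sigma A\Sigma)$, and then sandwich by the extreme eigenvalues of $\Sigma_i$ and $Z_i^\tsp Z_i$. The only stylistic difference is your Kronecker/vec treatment of the lower bound on $\Vert M\Vert_F^2$; the mixed-sign "obstacle" you describe disappears more simply by noting $DZ_i^\tsp Z_iD\succeq 0$ by congruence, so with $A=Z_i^\tsp Z_i$ one has $\tr(DADA)=\tr\{(DAD)A\}\geq \emin(A)\tr(AD^2)\geq \emin(A)^2\max_j(\lambda_jv_j)^2$ directly.
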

Together with Lemma \ref{lem:blockdiag}, Lemma \ref{lem:lmm_varbound} ensures
that, as long as $Z_i^\tsp Z_i$ and $\E(X_i^\tsp X_i)$ are invertible, $\{e_j,
j: \lambda_j = 0\}$ spans the null space of $\mathcal{I}^i(\theta)$. As observed
following Proposition \ref{prop:assmp4}, this makes checking the assumptions in
Section \ref{sec:general} easier. Similarly, an implication is
that, when implementing the proposed method, we can take $k_j(\theta) = 2$ when
$\lambda_j = 0$ and $k_j(\theta) = 1$ otherwise. We are ready for the first
main result of the section.

Let $\Vert \cdot \Vert$ be the spectral norm when the argument is a matrix.

\begin{thm}\label{thm:lmm}
  If $\limsup_{i\to\infty}r_i < \infty$ and there exists an $M \in (0, \infty)$
  such that, for every $i = 1, 2 \dots$, $ M^{-1} \leq \emin(Z_i^\tsp Z_i) \leq
  \emax(Z_i^\tsp Z_i) \leq M$, $M^{-1} \leq \emin\{\E(X_i^\tsp X_i)\}$, and
   $\E(\Vert X_i\Vert^{4 + \delta}) \leq M$ for some $\delta > 0$; then
  Assumptions \ref{assmp:support}--\ref{assmp:dense}, and hence the
  conclusions of Theorems \ref{thm:cont} and \ref{thm:main}, hold in the linear
  mixed model \eqref{eq:lmm}.
\end{thm}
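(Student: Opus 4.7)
The plan is to verify Assumptions \ref{assmp:support}--\ref{assmp:dense} and then invoke Theorems \ref{thm:cont} and \ref{thm:main}. Throughout I treat the predictor matrices $X_i$ as ancillary, letting $\gamma_i$ be the product of Lebesgue measure on $\R{r_i}$ with the distribution of $X_i$; since that distribution does not depend on $\theta$, the framework of Section \ref{sec:general} applies to the joint density of $(Y_i, X_i)$. Assumption \ref{assmp:support} is then immediate because the conditional Gaussian density of $Y_i$ given $X_i$ is everywhere positive. By Lemmas \ref{lem:blockdiag} and \ref{lem:lmm_varbound}, $\theta$ is critical if and only if some $\lambda_j = 0$, so Assumption \ref{assmp:dense} follows by perturbing each zero scale parameter to $1/m$.

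For Assumption \ref{assmp:loew_order}, the block-diagonal structure from Lemma \ref{lem:blockdiag} reduces the task to sandwiching the $\lambda$- and $\psi$-blocks separately. From $\sigma^2 I_{r_i} \preceq \Sigma_i \preceq (\sigma^2 + q M \sum_j \lambda_j^2) I_{r_i}$, Lemma \ref{lem:lmm_varbound} shows that $v^\tsp \mathcal{I}^i_\lambda(\theta) v$ is bounded above and below, uniformly in $i$, by continuous positive multiples of $\sum_j \lambda_j^2 v_j^2$, which gives $\mathcal{I}^i_\lambda(\theta) \asymp \mathcal{I}^1_\lambda(\theta)$ in Loewner order. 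Similarly, $\Sigma_i^{-1}$ has uniformly sandwiched eigenvalues, so $\mathcal{I}^i_\psi(\theta) = \E(X_i^\tsp \Sigma_i^{-1} X_i) \asymp \E(X_i^\tsp X_i)$; the latter is sandwiched uniformly in $i$ using $M^{-1} I \preceq \E(X_i^\tsp X_i)$ and $\E(X_i^\tsp X_i) \preceq \E\|X_i\|^2 I \preceq M^{2/(4+\delta)} I$ via H\"older's inequality. These comparisons produce the continuous $c_1, c_2$.

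For Assumption \ref{assmp:deriv}, partial derivatives of $\ell^i$ up to order two are polynomials in $X_i$ of total degree at most two and in the residual $y_i - X_i\psi$ of total degree at most two, with coefficients that are continuous in $\theta$ and locally bounded through the spectral bounds on $\Sigma_i$ and its derivatives in $\lambda$. H\"older's inequality and conditioning on $X_i$ bound the $(2+\delta')$th moment of any such derivative by $\E(\|X_i\|^{2(2+\delta')})$ times a conditional Gaussian moment of bounded order in the residual. Choosing $\delta' = \delta/2$ gives $2(2+\delta') = 4 + \delta$, and $\E\|X_i\|^{4+\delta} \leq M$ then closes the estimate locally uniformly in $\theta$ and uniformly in $i$.

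Assumption \ref{assmp:pd} is the main obstacle, because the observations are not identically distributed and Proposition \ref{prop:assmp4} cannot be invoked directly; its reasoning nevertheless adapts. After re-ordering coordinates so that the critical components (those $\lambda_j$ equal to zero at $\theta$) come first, I would take $v_{\theta j}^n = e_j$ for each such critical $j$ and choose the remaining $v_{\theta j}^n$ to be an orthonormal basis of eigenvectors of $\mathcal{I}_n(\theta)$ in the orthogonal complement, setting $k_j = 2$ for critical $j$ and $k_j = 1$ otherwise. Condition (iii) is then trivial, and condition (ii) follows because each $s^i_{\lambda_j}$ carries $\lambda_j$ as an overall factor, so it vanishes identically whenever $\tilde\lambda_j = 0$, regardless of $y^n$. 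For condition (i), the calculation from the proof of Proposition \ref{prop:assmp4} adapts to give $\tilde{\mathcal{I}}_n(\theta) = V_n^\tsp \sum_{i=1}^n \cov_\theta\{\bar{s}^i\} V_n$, where $\bar{s}^i$ assembles $\nabla_j^2 \ell^i$ at critical $\lambda_j$, $s^i_{\lambda_j}$ at non-critical $\lambda_j$, and $s^i_\psi$. The hard part is showing this sum is strictly positive definite: conditional on $X_i$ the residual $Y_i - X_i\psi$ is Gaussian, so the problem reduces to linear independence of its linear and centered-quadratic components as functions on $\R{r_i}$, which follows from invertibility of $\Sigma_i$ and $Z_i^\tsp Z_i$ (controlling the coefficient matrices of the quadratic parts) together with invertibility of $\E(X_i^\tsp \Sigma_i^{-1} X_i)$ (controlling the linear parts), all guaranteed by the hypotheses. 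With all assumptions verified, Theorems \ref{thm:cont} and \ref{thm:main} apply.
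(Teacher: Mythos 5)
Your proposal is correct and follows essentially the same route as the paper's proof: verifying Assumptions \ref{assmp:support}--\ref{assmp:dense} via Lemmas \ref{lem:blockdiag} and \ref{lem:lmm_varbound}, the eigenvalue and moment hypotheses on $Z_i$ and $X_i$, the choice $k_j = 2$ with critical vectors $e_j$ at zero scale parameters, and an adaptation of Proposition \ref{prop:assmp4} to non-identically distributed observations. The only differences are cosmetic: the paper carries out the calculations for non-stochastic $X_i$ and notes the stochastic case is similar, while you work with stochastic $X_i$ directly and spell out the positive definiteness of $\cov_\theta\{\bar{s}^i\}$ (via linear independence of the linear and centered quadratic forms in the Gaussian residual), a step the paper treats more tersely.
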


\begin{proof}[Proof of Theorem \ref{thm:lmm}]
  We prove the theorem with non-stochastic $X_i$; the calculations for
  stochastic $X_i$ are similar. Then, since $\E(\Vert X_i\Vert^{4 + \delta}) =
  \Vert X_i\Vert^{4 + \delta}$ we may assume $\emax(X_i^\tsp X_i)\leq M$, upon
  increasing $M$ as needed. Assumption \ref{assmp:support} is satisfied since
  $f_\theta(y_i)$ is strictly positive on $\R{r_i}$ for every $\theta \in
  \Theta$, with $\gamma_i$ being Lebesgue measure. We verify Assumption
  \ref{assmp:deriv} with $\delta > 0$ and $k = 2$ at critical $\theta'$. Define
  $\xi^i(\theta; y_i, X_i) \in \R{d}$ by
  \[
    \xi^i_j(\theta; y_i, X_i) = \tr\left\{\Sigma_i^{-1}H^i_j - \Sigma_i^{-1}(y_i
    - X_i\psi)(y_i - X_i\psi)^\tsp\Sigma_i^{-1}H^i_j\right\},~ j \leq d_1,
  \]
  and the remaining $d_2$ elements equal to $s_\psi^i(\theta; y_i,X_i)$. When
  $\lambda_j > 0$, $\xi^i_j(\theta; y_i, X_i) = s^i_j(\theta; Y_i, X_i) /
  \lambda_j$. Because $\xi^i_j(\theta; y_i, X)$ is continuous in $\theta$
  since $\sigma^2 > 0$, when $\lambda_j = 0$ it holds that $\xi_j^i(\theta; y_i,
  X_i) = \nabla_j^2 \ell^i(\theta; y_i, X_i)$. More generally,
  for $j, l \leq d_1$,
  \begin{align*}
     \nabla^2_{jl} \ell^i(\theta; y_i, X_i) &= \I(j =
    l) \xi_j^i(\theta; y_i, X_i) - \lambda_j \tr\left\{\Sigma_i^{-1}
    H^i_k \Sigma_i^{-1} H^i_j\right\} \\ &~~ + \lambda_j \tr\left\{\Sigma_i^{-1} H^i_k
    \Sigma_i^{-1}(y_i - X_i \psi)(y_i - X_i \psi)^\tsp \Sigma_i^{-1} H^i_j\right\}\\ &~~ +
    \lambda_j\tr\left\{\Sigma^{-1}(y_i - X_i \psi)(y_i - X_i \psi)^\tsp \Sigma_i^{-1}
    H^i_k \Sigma_i^{-1} H_j^i\right\}.
  \end{align*}
  Since $\sigma^2 > 0$,we can pick for any $\theta' \in
  \Theta$ a small enough open ball $B \subseteq \R{d}$ centered
  at $\theta'$ and large enough $M < \infty$ to have, on $B$, that $\sigma^{-2} \leq M$, $\Vert \Sigma_i^{-1}\Vert \leq M$, and $\lambda_j \leq M$
  for all $j$. Thus, using sub-multiplicativity of the spectral norm and that
  the trace is upper bounded by $r_i$ times the spectral norm, on $B$,
  \begin{align*}
    |\nabla^2_{jl}\ell^i(\theta; y_i, X_i)| &\leq r_i \{ M \Vert H_j^i\Vert +
    M^2 \Vert H_j^i\Vert \Vert y_i - X_i\psi\Vert^2 \} + r_i M^3 \Vert
    H_l^i\Vert \Vert H_{j}^i\Vert \\ &~~ + 2 r_i M^4\Vert y_i - X_i\psi\Vert^2
    \Vert H_j^i\Vert \Vert H_l^i\Vert.
  \end{align*}
  Now, $\Vert H_j^i\Vert \leq \Vert Z_i^\tsp Z_i\Vert \leq M$ and $\Vert y_i -
  X_i \psi\Vert \leq \Vert y_i\Vert + \Vert X_i \psi\Vert\leq \Vert y_i\Vert + M
  \Vert \psi\Vert$ for all $(i, j)$. Thus, since $\Vert \psi\Vert$ is bounded on
  $B$, the expectation in Assumption \ref{assmp:deriv} to be bounded is, upon
  increasing $M$ as needed, less than
  \[
    M \{1 + \sup_{\theta \in \Theta\cap B}\E_\theta(\Vert Y_i\Vert^{4 + 2 \delta}),
  \]
  which is bounded uniformly in $i$ since $Y_i$ is a normal vector whose mean
  and covariance matrix are bounded uniformly in $i$ and $\theta \in B\cap
  \Theta$. The calculations for $k = 1$ and $j> d_1$ or  $l > d_1$ are very
  similar and hence omitted.

  To verify Assumption \ref{assmp:loew_order}, it suffices to find $c_1, c_2$
  such that, for any $v \in \R{d}$,
  \[
    c_1(\theta)v^\tsp \mathcal{I}^{1}(\theta) v \leq v^\tsp \mathcal{I}^i(\theta) v \leq c_2(\theta)v^\tsp \mathcal{I}^{1}(\theta)v.
  \]
  Let $v = [v_1^\tsp , v_2^\tsp]^\tsp$ with $v_1 \in \R{d_1}$. By Lemmas
  \ref{lem:blockdiag} and \ref{lem:lmm_varbound}, using that $r_i$ and the
  eigenvalues of $X_i^\tsp X_i$, $Z_i^\tsp Z_i$, and $\Sigma_i^{-1}$ are
  bounded, there exist $M \in (0, \infty)$ and $c_3, c_4 : \Theta \to (0,
  \infty)$ not depending on $i$ such that
  \[
    M^{-1}\{\Vert v_2\Vert + c_3(\theta) \max_{j \leq d_1} (\lambda_j v_j)^2\}
    \leq v^\tsp \mathcal{I}^i(\theta) v \leq M \{\Vert v_2\Vert +
    c_4(\theta) \max_{j \leq d_1 }(\lambda_j v_{j})^2\},
  \]
  Thus, the desired inequalities hold with $c_1(\theta) = M^{-2}\{1 +
  c_4(\theta)\}^{-1}\min\{1,c_3(\theta)\}$ and $c_2(\theta) = M^2 \min\{1,
  c_3(\theta)\}^{-1}\{1 + c_4(\theta)\}$.

  To verify Assumption \ref{assmp:pd}, we argue as in the proof of Proposition
  \ref{prop:assmp4}, with minor modifications to take the observation index into
  account.  Thus, adding an observation
  index to the vector $\bar{s}$ defined in Proposition \ref{prop:assmp4}, and
  making the dependence on the predictors explicit, we get that $\xi^i(\theta;
  Y_i, X_i)$ is equal to $\bar{s}^i(\theta; Y_i, X_i)$ with each element
  corresponding to a $\lambda_j > 0$ scaled by $1 /  \lambda_j$. Thus, the
  covariance matrix of $\xi^i(\theta; Y_i, X_i)$ is positive definite if and
  only if that of $\bar{s}^i(\theta; Y_i, X_i)$ is. With that, the verification
  of Assumption \ref{assmp:pd} follows as in the proof of Proposition
  \ref{prop:assmp4}.

  Assumption \ref{assmp:dense} holds because, for $j = 1, \dots d_1$,
  $\lambda_j = 0$ is the only critical element in $[0, \infty)$, and that
  completes the proof.
\end{proof}

Lastly in this section we consider confidence regions for $\lambda$ only. To
that end, let
\[
  T_n^\lambda(\lambda; \psi, Y^n, X^n) = \left\{\sum_{i = 1}^n
  s^i_\lambda(\theta; Y_i, X_i)^\tsp \right\}\mathcal{I}_n^\lambda(\theta)^{-1}
  \left\{ \sum_{i = 1}^n s^i_\lambda(\theta; Y_i, X_i) \right\},
\]
where $\mathcal{I}_n^\lambda(\theta) = \cov_\theta\{s_n^\lambda(\theta; Y^n,
X^n)\}$ is the leading $d_1\times d_1$ block of $\mathcal{I}_n(\theta)$. For
such regions to be practically useful, or feasible, $\psi$ has to be known or
estimated. Our next result says $\psi$ can estimated by any square root
$n$-consistent estimator without affecting the asymptotic coverage probability;
this result assumes identically distributed observations for simplicity.
To be more specific, let
\[
  \mathcal{R}^{\lambda}_n(\alpha) = \{\lambda : T_n^\lambda(\lambda; \psi, Y^n,
  X^n) \leq q_{d_1, 1 - \alpha}\}
\]
and let $\hat{\mathcal{R}}^{\lambda}_n(\alpha)$ be that confidence region with
an estimator $\hat{\psi}$ in place of $\psi$ in $T_n$.

\begin{thm}\label{thm:est:cover}
  If the $(Y_i, X_i)$, $i = 1, 2, \dots$, are identically distributed and the
  conditions of Theorem \ref{thm:lmm} hold, the confidence region
  $\mathcal{R}^{\lambda}_n(\alpha)$ satisfies, for any compact $C \subseteq
  \Theta$ and $\alpha \in (0, 1)$,
  \[
    \lim_{n\to \infty}\inf_{\theta \in C}\pr_\theta\left\{\lambda \in \mathcal{R}^{\lambda}_n(\alpha) \right\} = 1- \alpha;
  \]
  and if in addition $\sqrt{n}\Vert \hat{\psi}_n - \psi_n \Vert = O_{\pr}(1)$
  under any convergent $\{\theta_n = (\lambda_n, \psi_n)\} \in \Theta$, then the same holds for
  $\hat{\mathcal{R}}^\lambda_n(\alpha)$.

\end{thm}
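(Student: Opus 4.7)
The plan is to handle both claims by a two-step argument: first reformulate $T_n^\lambda$ in a way that trivializes critical points, then apply the Section~\ref{sec:general} machinery to the $\lambda$-block and control the effect of estimating $\psi$ via a mean-value expansion. For the reformulation, write $\xi^i$ for the vector defined in the proof of Theorem~\ref{thm:lmm} and $\xi^{i,\lambda}$ for its first $d_1$ coordinates, so that $s_\lambda^i(\theta; y_i, X_i) = D_\lambda\, \xi^{i,\lambda}(\theta; y_i, X_i)$ with $D_\lambda = \diag(\lambda_1, \ldots, \lambda_{d_1})$. When all $\lambda_j > 0$ the diagonal rescalings cancel between score and information and
\[
  T_n^\lambda(\lambda; \psi, Y^n, X^n) = \xi_n^\lambda(\theta; Y^n, X^n)^\tsp \cov_\theta\{\xi_n^\lambda(\theta; Y^n, X^n)\}^{-1} \xi_n^\lambda(\theta; Y^n, X^n),
\]
where $\xi_n^\lambda = \sum_{i=1}^n \xi^{i,\lambda}$. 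The right-hand side is jointly continuous on $\Theta\times\R{r_1}\times\cdots\times\R{r_n}$ and hence gives the continuous extension of $T_n^\lambda$ across critical $\lambda$; in particular there is no need to revisit the continuous-extension machinery from scratch.

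For the first claim, I would retrace the proof of Theorem~\ref{thm:main} with $s_n$ replaced by $\xi_n^\lambda$, $\mathcal{I}_n$ by $\cov_\theta(\xi_n^\lambda)$, and $d$ by $d_1$. By analogs of Lemmas~\ref{lem:cover_to_seq} and \ref{lem:noncrit_to_crit_2} (available because $\lambda_j = 0$ is the only critical element in each coordinate, so Assumption~\ref{assmp:dense} holds for $\lambda$), it suffices to prove $T_n^\lambda(\lambda_n; \psi_n, Y^n, X^n) \rightsquigarrow \chi^2_{d_1}$ under $\pr_{\theta_n}$ along any convergent sequence $\{\theta_n\} \in \Theta$. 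Lyapunov's condition for $t^\tsp \xi^{i,\lambda}$ and a lower bound on $\emin\{\cov_{\theta_n}(\xi^{1,\lambda})\}$ follow from the $(2+\delta)$th moment bounds and the positive-definite conclusion of Proposition~\ref{prop:assmp4} already established in the proof of Theorem~\ref{thm:lmm}.

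For the second claim, fix a convergent sequence $\{\theta_n\} = \{(\lambda_n, \psi_n)\}$ in $\Theta$ and apply the mean-value theorem componentwise in $\psi$ to get
\[
  \frac{\xi_n^\lambda(\lambda_n, \hat\psi_n) - \xi_n^\lambda(\lambda_n, \psi_n)}{\sqrt n} = \frac{\nabla_\psi \xi_n^\lambda(\lambda_n, \tilde\psi_n)}{n}\, \sqrt n (\hat\psi_n - \psi_n)
\]
for an intermediate $\tilde\psi_n$ between $\hat\psi_n$ and $\psi_n$ (data suppressed). The second factor is $O_{\pr}(1)$ by assumption. A direct calculation using $Y_1\mid X_1 \sim \rN(X_1\psi_n, \Sigma_1(\lambda_n))$ yields
\[
  \E_{\theta_n}\{\xi^{1,\lambda}_j(\lambda_n, \psi; Y_1, X_1) \mid X_1\} = -(\psi_n - \psi)^\tsp X_1^\tsp \Sigma_1(\lambda_n)^{-1} H^1_j \Sigma_1(\lambda_n)^{-1} X_1 (\psi_n - \psi),
\]
a quadratic form in $(\psi_n - \psi)$ whose gradient at $\psi_n$ is zero; hence $\E_{\theta_n}\{\nabla_\psi \xi^{1,\lambda}(\theta_n; Y_1, X_1)\} = 0$. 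A uniform law of large numbers on a shrinking neighborhood of $\psi_n$, justified by $(1+\delta)$th moment bounds on $\nabla_\psi \xi^{1,\lambda}$ of the same type as in the proof of Theorem~\ref{thm:lmm}, then makes the first factor $o_{\pr}(1)$. An analogous continuity argument gives $\cov\{\xi_n^\lambda(\lambda_n, \hat\psi_n)\}/n - \cov\{\xi_n^\lambda(\lambda_n, \psi_n)\}/n \to 0$ in $\pr_{\theta_n}$-probability, so by Slutsky's theorem $T_n^\lambda(\lambda_n; \hat\psi_n) - T_n^\lambda(\lambda_n; \psi_n) \to 0$ in $\pr_{\theta_n}$-probability, and the second claim follows from the first.

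The main obstacle is the uniform law of large numbers for $\nabla_\psi \xi_n^\lambda$ on a shrinking neighborhood of $\psi_n$ while $\theta_n$ itself varies; a stochastic-equicontinuity argument based on a single-observation $(1+\delta)$th moment bound on $\nabla_\psi \xi^{1,\lambda}$ should suffice, and in the linear mixed model the requisite bounds reduce to polynomial moments of Gaussian vectors with locally uniformly bounded parameters, a minor variation on the computations in the proof of Theorem~\ref{thm:lmm}.
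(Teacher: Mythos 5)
Your proposal is correct and follows essentially the same route that the paper's own machinery dictates: cancel the $D_\lambda$ scaling to get the everywhere-defined $\xi$-form of $T_n^\lambda$ (the modified score already introduced in the proof of Theorem \ref{thm:lmm}), prove a triangular-array CLT along arbitrary convergent sequences and invoke the analog of Lemma \ref{lem:cover_to_seq} as in Theorem \ref{thm:main}, and absorb $\hat{\psi}_n$ via the orthogonality $\E_{\theta_n}\{\nabla_\psi \xi^{1,\lambda}(\theta_n)\} = 0$ (the block-diagonality of Lemma \ref{lem:blockdiag}) together with $\sqrt{n}$-consistency. The one ``obstacle'' you flag is not really one: in this model $\nabla_\psi \xi^{1,\lambda}$ is affine in $\psi$ and $\cov_\theta(\xi_n^\lambda)$ does not depend on $\psi$ at all, so a plain law of large numbers plus $\Vert \tilde{\psi}_n - \psi_n\Vert = O_{\pr}(n^{-1/2})$ suffices in place of a uniform law of large numbers.
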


In practice, the estimator $\hat{\psi}_n$ can be, for example, the least squares
estimator
\[
  \hat{\psi}_n = \left(\sum_{i = 1}^n X_i^\tsp X_i\right)^{-1}\sum_{i = 1}^n
  X_i^\tsp Y_i.
\]
This estimator is square root-$n$ consistent under convergent $\{\theta_n\}$
under weak conditions. More specifically, it is multivariate normally
distributed given $(X_1, \dots, X_n)$ with mean $\psi_n$ and covariance matrix
\[
  \left(\sum_{i = 1}^n X_i^\tsp X_i\right)^{-1}\left(\sum_{i = 1}^n X_i^\tsp
  \Sigma_n X_i\right)\left(\sum_{i = 1}^n X_i^\tsp X_i\right)^{-1},
\]
which tends to a matrix of zeros under weak conditions.

\subsection{Practical considerations}

In many mixed models the likelihood does not admit an analytical expression,
which complicates implementing likelihood-based methods in
practice. In particular, evaluating the log-likelihood, or its derivatives and
their expectations, may require the numerical evaluation or
approximation of integrals. A detailed treatment of numerical integration is
outside the scope of the present article, but we briefly discuss two possible
practical issues: (i) the critical points and their corresponding critical
vectors are known, but the proposed test statistic does not admit a convenient
expression at those points; and (ii) it is difficult to establish which points
are critical by analytical means. For clarity, we discuss identically
distributed observations and assume $k_j \leq 2$ suffices to satisfy Assumption
\ref{assmp:pd}, which in our experience is often the case in practice; other
settings can be handled similarly.

Issue (i) can occur when, for example, Corollary \ref{corol:indep_crit} says a
scale parameter $\lambda_j = 0$ is a critical point with critical vector $e_j$
in a generalized linear mixed model. To compute our test statistic at such
points it is useful to note that, with the $\bar{s}$ defined in Proposition
\ref{prop:assmp4} and under regularity conditions,
\[
  T_n(\theta; Y^n) = n^{-1}\left\{\sum_{i = 1}^n \bar{s}(\theta; Y_i)\right\} \cov_\theta\{ \bar{s}(\theta; Y_1)\}^{-1} \left\{\sum_{i = 1}^n \bar{s}(\theta; Y_i)\right\}.
\]
Thus, the user needs to compute first and second order partial derivatives of
the log-likelihood and their covariance matrix. A routine calculation shows,
under regularity conditions, the $j$th element of
$\bar{s}(\theta, y_i)$ is
\begin{equation}\label{eq:comp_deriv}
\frac{1}{f_\theta(y_i)}\int \nabla_{j}^{k_j}
  f_{\theta}(y_i\mid w) \nu(\dd w),
\end{equation}
where $k_j = 2$ for $\theta_j = \lambda_j = 0$ and $k_j = 1$ otherwise. The
derivative inside the integral can typically be computed analytically for any
$k_j$ and hence there are no fundamental computational differences between $k_j
= 1$ and $k_j = 2$. We illustrate these calculations in detail
in an example in the Supplementary Material. Software
packages for mixed models typically approximate integrals like that in
\eqref{eq:comp_deriv} with $k_j = 1$ using Laplace approximations, adaptive
Gaussian quadrature, or Monte Carlo
\citep[e.g][]{Bates.etal2015,Knudson.etal2021}. The integrals with $k_j = 2$ and
those required to compute the covariance matrix can be handled by similar
methods. When the critical vectors are known but are not standard basis vectors,
the computations are similar but with different linear combinations of the
derivatives.

By contrast, issue (ii) is not merely computational. If one cannot establish the
existence or lack of critical points, then one does not know whether the Fisher
information is invertible and, hence, whether the theory motivating many other
methods applies. This highlights a practical advantage of the proposed
procedure: if the inversion of the Fisher information fails, the user is
effectively warned they are attempting inference at a previously unknown
critical point. Having identified a critical point, the critical vectors can be
calculated numerically by spectral decomposition of the Fisher information, and
then one is essentially back to issue (i).

\section{Numerical experiments} \label{sec:sims}

We examine finite sample properties of the proposed method in a linear mixed
model. The Supplementary Material
contains similar simulations in a generalized linear mixed model for binary
responses with asymmetrically distributed random effects, and the results there
are qualitatively similar.

Suppose that for $i
= 1, \dots n$ and $j = 1, \dots, r$,
\begin{equation}\label{eq:sim}
  Y_{ij} = \psi_1 + \psi_2 X_{ij} + U_{1i} + U_{2i}
X_{ij} + E_{ij},
\end{equation}
where $[U_{1i}, U_{2i}]^\tsp \sim \rN\{0, \diag(\lambda_1^2, \lambda_2^2)\}$,
independently for $i = 1, \dots, n$ and independent of $E = [E_{1, 1}, \dots,
E_{n, r}]^\tsp \sim \rN(0, \sigma^2 I_{nr})$. To conform with the previous
section, we assume $\sigma^2 = 1$ is known. Thus, the parameter set is $\Theta =
\{\theta = (\lambda, \psi) \in[0, \infty)^2 \times \R{2}\}$. We study confidence
regions for $\lambda$ with $\psi$ estimated, that is, a nuisance parameter. The
Supplementary Material contains simulations where
$\psi$ is known, and simulations where $\sigma$ is unknown and confidence
regions are created for $(\lambda, \sigma)$ with $\psi$ estimated. We compare
the proposed confidence region to those obtained by inverting the likelihood
ratio test statistic and the Wald test statistic standardized by expected
information evaluated at the (unconstrained) maximum likelihood estimates.
Specifically,
\[
  T_n^L(\lambda; y^n) = 2 \{\ell_n(\hat{\theta}, y^n) - \ell_n([\lambda,
  \tilde{\psi}]^\tsp, y^n)\},
\]
where $\hat{\theta} \in \argmax_{\theta \in \Theta}\ell_n(\theta; y^n)$ and
$\tilde{\psi} = \tilde{\psi}(\lambda) \in \argmax_{\psi \in \R{2}}\ell_n([\lambda,
\psi]^\tsp, y^n)$; and
\[
  T_n^W(\lambda; y^n) = (\hat{\lambda} - \lambda)^\tsp
  \mathcal{I}^\lambda_n(\hat{\theta})(\hat{\lambda} - \lambda),
\]
where $\mathcal{I}_n^\lambda(\theta)$ is the $2 \times 2$ block of
$\mathcal{I}_n(\theta)$ corresponding to $\lambda$.

Our theory suggests the proposed confidence region may have good finite sample
coverage near critical points because the proposed test statistic has the same
asymptotic distribution under any sequence of parameters tending to a critical
point as the sample size increases. Conversely, we expect the coverage of the
other confidence regions may depend on how close to a critical point the true
parameter is. To examine this we consider true
$\lambda$ at different distances from the origin:
\[
  \lambda_1 = \lambda_2 \in \{10^{-6}, 0.01, 0.05, 0.1, 0.2, 0.3, 0.4, 0.5\}.
\]
Because these $\lambda$ correspond to interior points of the parameter set, a
level $1 - \alpha$ confidence region covers $\lambda$ if the test statistic at
$\lambda$ is smaller than the $(1 - \alpha)$th quantile of the chi-square
distribution with two degrees of freedom. The use of that distribution is
motivated by classical asymptotic theory for $T_n^L(\lambda; Y^n)$ and
$T_n^W(\lambda; Y^n)$, and by our theory for the proposed test statistic.
Different reference distributions should be used for the Wald and likelihood
ratio statistics at boundary points
\citep{Self.Liang1987,Geyer1994,Baey.etal2019}, while the proposed method uses
the same reference distribution at every point of the parameter set.

We performed a Monte Carlo experiment with 10,000 replications. We set $\psi =
1_2$ and, for every $(\lambda, \psi)$ considered, generated stochastic
predictors as independent draws from a uniform distribution on $[-1, 2]$.
Responses were then generated according to \eqref{eq:sim}. The sample sizes were
$n \in \{20, 80\}$ and $r = 10$. Code for reproducing the results is available
at \url{https://github.com/koekvall/conf-crit-suppl}.

Figure \ref{fig:sim_cover} summarizes the results. Notably, the proposed
confidence region has near-nominal estimated coverage probability in all
considered settings. By contrast, the estimated coverage probabilities for the
likelihood ratio and Wald confidence regions are substantially different from
nominal for many settings. Moreover, their coverage is sometimes lower and
sometimes higher than nominal. That is, for those methods, the quality of the
chi-square distribution as a reference distribution depends on how close to the
critical point the true parameter is. Simulations in the Supplementary Material
indicate the proposed method give near-nominal
coverage probabilities also of critical points, including ones at the boundary.

Figure \ref{fig:sim_qq} examines the agreement between sample quantiles of the
three considered test statistics and the theoretical quantiles of a chi-square
distribution with two degrees of freedom. The first plot, corresponding to small
but non-zero scale parameters, shows the agreement is poor near critical points
for the likelihood ratio and Wald test statistics. For the larger scale
parameters, agreement between sample and theoretical quantiles is decent for all
three test statistics.

\begin{figure}[h]
 \centering
 \includegraphics[width = 0.8\textwidth]{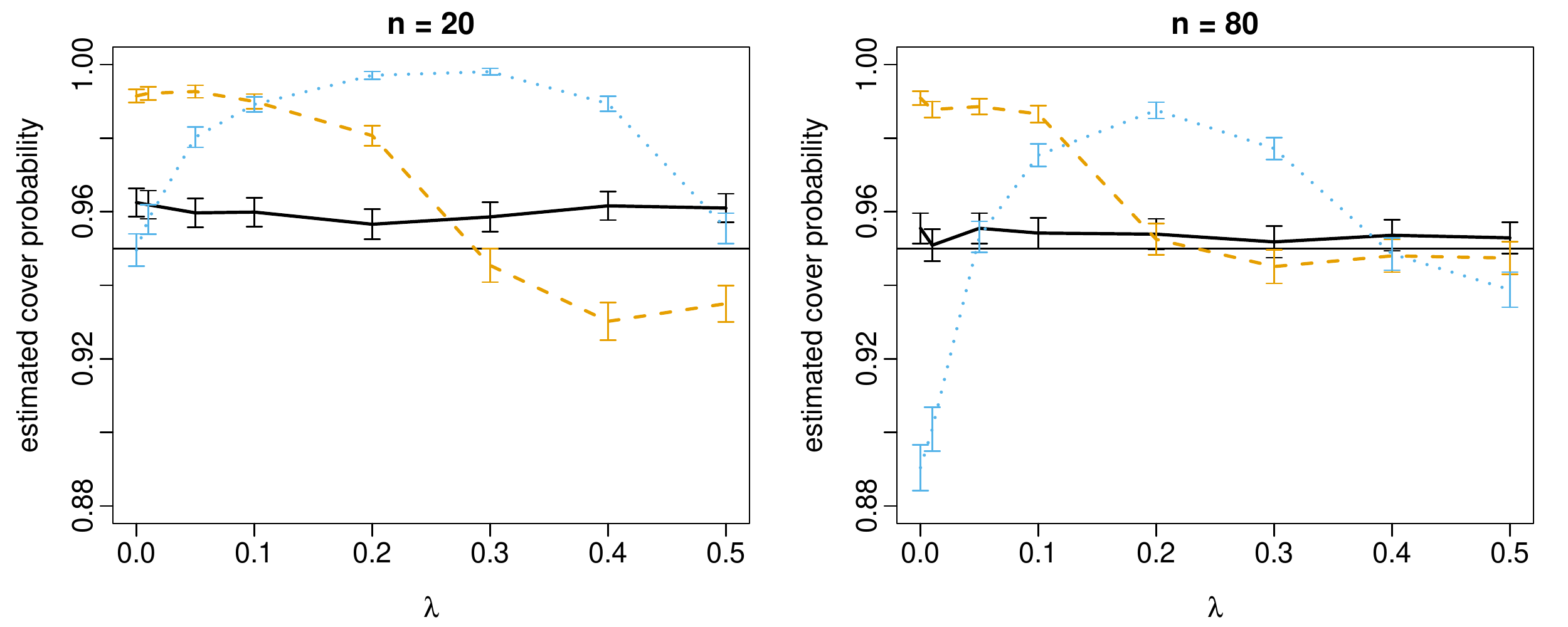}

 \caption{Monte Carlo estimates of coverage probabilities of confidence regions
 from inverting the modified score (solid), likelihood ratio (dashed), and Wald
 (dotted) test statistics. The straight horizontal line indicates the nominal
 0.95 coverage probability and vertical bars denote $\pm 2$ times Monte Carlo
 standard errors.}
 \label{fig:sim_cover}
\end{figure}

\begin{figure}[h]
 \centering
 \includegraphics[width = 0.8\textwidth]{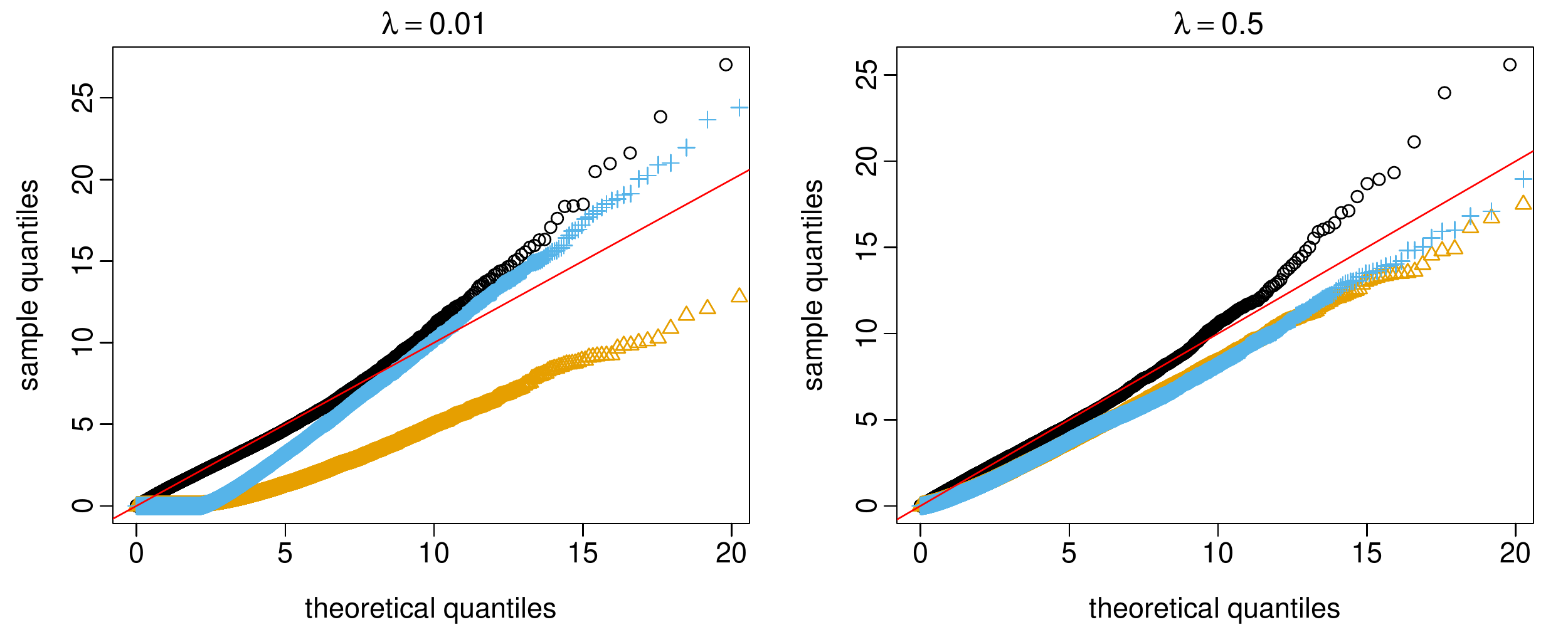}

 \caption{Quantile-quantile plots for modified score (circles), likelihood ratio
 (triangles), and Wald (plus signs) test statistics. The theoretical quantiles
 are from the chi-square distribution with 2 degrees of freedom and the sample
 quantiles from 10,000 Monte Carlo replications with $(n, r) = (80, 10)$.}
 \label{fig:sim_qq}
\end{figure}

For additional insight into test statistics' behavior near critical points,
Figure \ref{fig:sim_power} shows estimated rejection probabilities (size
and power) for tests of the null hypothesis that $\lambda_1 = \lambda_2 =
10^{-6}$. The data generating settings are the same as those used for Figure
\ref{fig:sim_cover}. The power curves are not directly comparable because, as
was also shown in Figure \ref{fig:sim_cover}, the different tests have different
sizes. Nevertheless, the power curves behave similarly as the true $\lambda$
moves away from the null hypothesis value. This indicates the differences in
coverage observed in Figure \ref{fig:sim_cover} is not in general due to how
large the different confidence regions are.

\begin{figure}[h]
 \centering
 \includegraphics[width = 0.8\textwidth]{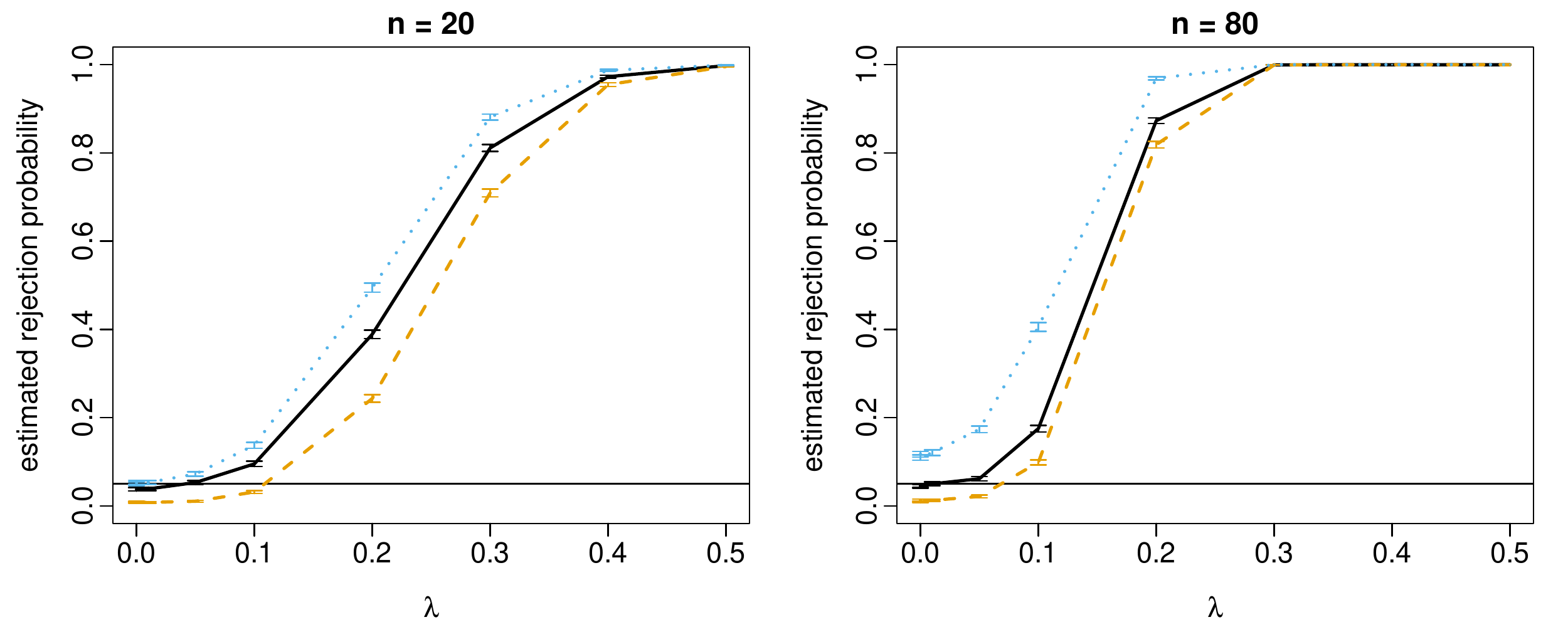}
 \caption{Monte Carlo estimates of rejection probabilities for testing the null
 hypothesis $\lambda = 10^{-6} 1_2$ using the modified
 score (solid), likelihood ratio (dashed), and Wald (dotted) test. The straight
 horizontal line indicates the size of the tests (0.05) and vertical bars denote
 $\pm 2$ times Monte Carlo standard errors. }
 \label{fig:sim_power}
\end{figure}

We considered several configurations in addition to those reported, including
both larger and smaller values of $n$ and $\lambda$, and the results were
remarkably consistent. To compute the proposed test statistic, we used the {\tt
lmmstest} R package written by the first author. To fit the model and compute
the likelihood ratio and Wald test statistics we used the {\tt lme4} R package
\citep{Bates.etal2015}. The Supplementary Material
includes times for computing the test-statistics in
the simulations on which Figure \ref{fig:sim_cover} is based. The proposed
test-statistic was about 3--4 times faster to evaluate than the likelihood ratio
and Wald statistics on average, but we expect both relative and absolute
computing times to vary substantially between settings and implementations.

We also note there is a Stata routine for calculation of the proposed
confidence region for a single variance parameter in linear mixed models with a
random intercept \citep{Bottai.Orsini2004}, a special case which can also be
treated using exact finite sample methods
\citep{Crainiceanu.Ruppert2004}.

\section{Data example} \label{sec:datex}

We illustrate using a dataset presented by \citet{Fitzmaurice.etal2012} which
contains a subset of the pulmonary function data collected in the Six Cities
Study \citep{Dockery.etal1983}. The data include a pulmonary measure called the
forced expiratory volume in the first second ({\tt FEV1}), height ({\tt ht}),
and age obtained from a randomly selected subset of the female participants
living in Topeka, Kansas. The sample includes $n = 300$ girls and young women,
with a minimum of one and a maximum of twelve observations over time. Age and
height are believed to be associated with the ability to take in and force out
air. To model these data, consider the linear mixed model
\begin{equation} \label{eq:data_ex:model}
\mathtt{FEV1}_{it} = \psi_1 + \psi_2 \mathtt{age}_{it} + \psi_3
  \mathtt{ht}_{it} + \psi_4 \mathtt{age}_{i1} + \psi_5
  \mathtt{ht}_{i1} + U_{i1} + U_{i2} \mathtt{age}_{it} + E_{it},
\end{equation}
where $U_{i1} \sim \rN(0, \lambda_1^2)$, $U_{i2} \sim \rN(0, \lambda_2^2)$, and
$E_{it} \sim \rN(0, \sigma^2)$ are mutually independent for all individuals
indexed by $i$ and time points indexed by $t$. This is a model considered by
\citet{Fitzmaurice.etal2012}, modified slightly to fit our setting. The
parameter set is $\Theta = \{\theta = (\psi, \lambda, \sigma) \in \R{5} \times
[0, \infty)^2 \times (0, \infty)\}$.

\begin{table}[h]
\centering
\begin{tabular}{l l l l l}
 Parameter & Estimate & Mod. Score CI & Lik. Rat. CI & Wald CI \\
 \hline
 $\lambda_1$ (intercept) & $0$ & $(0, 0.0482)$ & $(0, 0.0604)$ & $(0, 0.126)$\\
 $\lambda_2$ (age) & $0.0201$ & $(0.0188, 0.0219)$ & $(0.0183, 0.0222)$ & $(0.0181, 0.0222)$ \\
 $\sigma$ (error) & $0.156$ & $(0.152, 0.162)$ & $(0.151, 0.162)$ & $(0.151, 0.162)$\\
\end{tabular}

\caption{Maximum likelihood estimates and 95\% confidence intervals (CI) for
scale parameters in the linear mixed model \eqref{eq:data_ex:model}.} \label{tab:fit}
\end{table}

The maximum likelihood estimate of $\psi$ is $\hat{\psi} = (-2.2, 0.078, 2.80,
-0.040, -0.19)$, but we focus on the scale parameters whose estimates are in
Table \ref{tab:fit}. Notably, the maximum likelihood estimate of $\lambda_1$ is
zero, indicating common confidence regions may be unreliable. We present a
confidence region for $\lambda$ (Figure \ref{fig:data_ex}) and three
componentwise confidence regions (Table \ref{tab:fit}). When creating a
confidence region for a sub-vector or component of $\theta$, say $\lambda_1$,
the other components are effectively nuisance parameters. Then, at non-critical
points we standardize the score for $\lambda_1$ by the Schur complement
(efficient information) $\mathcal{I}_n / \mathcal{I}_n^{-\lambda_1}$, where
$\mathcal{I}_n^{-\lambda_1}$ is $\mathcal{I}_n$ with rows and columns
corresponding to $\lambda_1$ removed and evaluate at estimates of the nuisance
parameters (see e.g. \citet{Fewster.Jupp2013} or \citet[][Chapter
2]{Bickel.etal1998} for motivation). Similarly, at critical points we
standardize the modified score for $\lambda_1$ by $\tilde{\mathcal{I}}_n /
\tilde{\mathcal{I}}_n^{-\lambda_1}$; we comment further on nuisance parameters
in Section \ref{sec:final}.

For context, we also present likelihood ratio intervals based on the profile
likelihood computed using the {\tt confint} function in {\tt lme4}, and Wald
intervals based on maximum likelihood estimates from {\tt lme4} and the expected
Fisher information evaluated at estimates. These use the chi-square distribution
with one degree of freedom as reference. To decide whether to include the
boundary points $\lambda_1 = 0$ and $\lambda_2 = 0$ in the componentwise
confidence intervals, we note $\hat{\lambda}_1 = 0$ so $0$ is in confidence
intervals based on the profile likelihood or Wald statistic for any reference
distribution. Conversely, the test statistics for $\lambda_2$ are so large at
$0$ that $0$ should not be in either region for any relevant reference
distribution. To validate this, we used the {\tt varTestnlme} R package
\citep{Baey.Kuhn2019} to test, separately, $\lambda_1^2 = 0$ and $\lambda_2^2 =
0$, and got the $p$-values $0.5$ and $4\times 10^{-78}$, respectively.

The proposed interval for $\lambda_1$ is substantially smaller than that based
on the likelihood ratio (Table \ref{tab:fit}). This is consistent with our
simulations where the latter had greater than nominal empirical coverage of
small scale parameters. The Wald interval for $\lambda_1$ is even wider than the
likelihood ratio-based interval. The proposed interval for $\lambda_2$ is
smaller than the other two, and its left endpoint is further from zero. Thus,
there are indications the proposed procedure leads to not only reliable but more
precise inference. The intervals for $\sigma$ only differ in the third
significance digit. This is consistent with both theory and simulations since
the estimate of $\sigma$ is further from zero than those of $\lambda_1$ and
$\lambda_2$, and hence the different test statistics are expected to behave
similarly.

Figure \ref{fig:data_ex} shows plots of the componentwise test statistics based
on the proposed method for a range of $\lambda_1$ and $\lambda_2$. The values of
$\lambda_1$ and $\lambda_2$ such that the graph of the corresponding test
statistic is below the critical value $3.84$, the $0.95$th quantile of the
chi-square distribution with one degree of freedom, give the confidence regions
in Table \ref{tab:fit}. The graphs indicate the test statistics are convex in
$\lambda_1$ and $\lambda_2$, respectively. The proposed regions for $\lambda$
(Figure \ref{fig:data_ex}, third plot), which use the
chi-square distribution with two degrees of freedom as reference distribution,
can be used to assess which values of $\lambda$ are supported by the data. We
may, for example, reject the joint null hypothesis that $\lambda_1 = \lambda_2 =
0$ at conventional levels of significance. To create these
graphs and the corresponding confidence regions in Table \ref{tab:fit}, we
evaluated the test statistics at a grid of 50 values each for $\lambda_1$ and
$\lambda_2$ and included in the confidence regions those points where the
test-statistics were less than the desired quantile of the reference
distributions. Thus, for the first and second plot we evaluated componentwise
test-statistics 50 times each, and for the third plot we evaluated the
test-statistic for $\lambda$ at $50\times 50 = 2500$ points. The coarseness of
the grid can be adjusted depending on desired accuracy and computing times.

\begin{figure}[h]
 \centering
 \includegraphics[width = 0.9\textwidth]{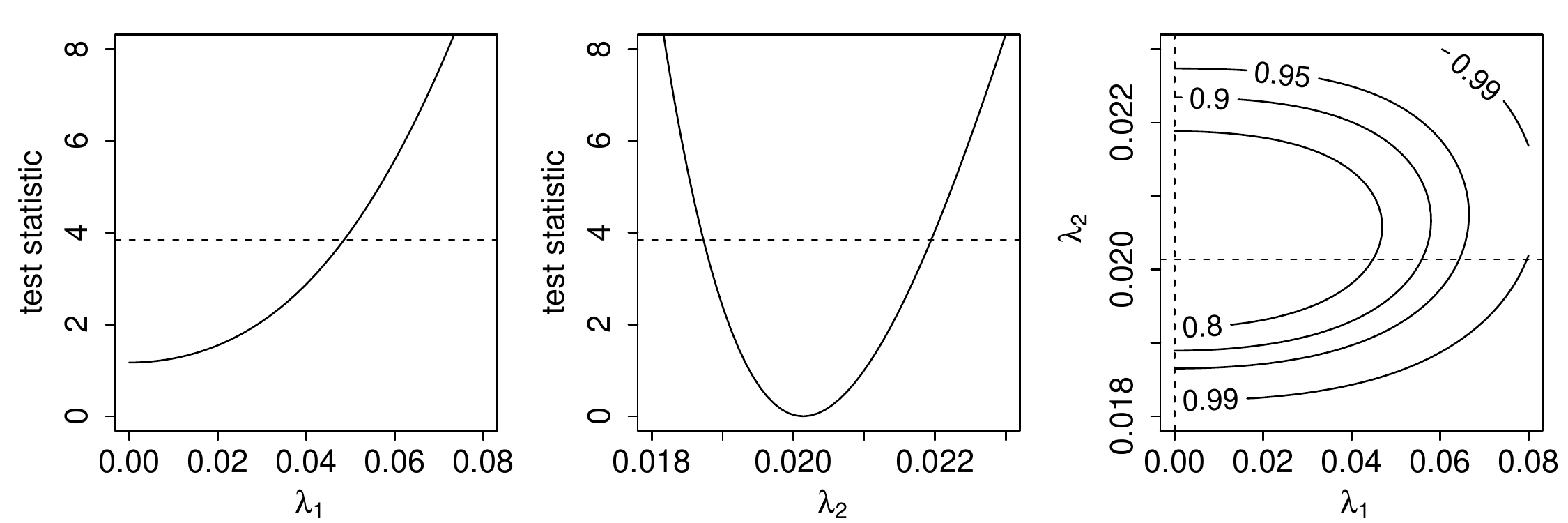}

 \caption{Componentwise test statistics for $\lambda_1$ (first plot) and
 $\lambda_2$ (second plot), and joint 80--99 \% confidence regions for $\lambda$
 (third plot), based on the chi-square distribution with two
 degrees of freedom. The dashed lines in the first two plots mark the 0.95th
 quantile of a chi-square distribution with one degree of freedom. The dashed
 lines in the third plot mark maximum likelihood estimates.}
 \label{fig:data_ex}
\end{figure}

\section{Final remarks} \label{sec:final}

Linking the boundary problem to the singular information problem allows a deeper
understanding of the behavior of the likelihood function in shrinking
neighborhoods of the boundary of a parameter set. Perhaps more importantly, it
permits the construction of confidence regions that have asymptotically correct
uniform coverage probability.

The advantages of using the proposed modified score test in constructing
confidence regions are many-fold: the proposed procedure does not
in general require a consistent point estimator,
which can be troublesome when the parameter is at or near the boundary; it does
not in general rely on simulation algorithms, which typically need to be
programmed for the specific problem at hand; it can be applied to a broad
variety of models, including the linear mixed model; it allows inference
on scale parameters when the random effects follow an asymmetric distribution,
which gives insight about the sign and the magnitude of the skewness. In
addition, to the best of our knowledge, the asymptotic behavior under a sequence
of parameters of the Wald and likelihood ratio test statistics for a scale
parameter, has not been described for settings in which the Fisher information
has any rank less than full.

Our work suggests several avenues for future research: First, more theory is
needed on settings with nuisance parameters not orthogonal to the parameters of
interest. Existing theory suggests replacing the Fisher information by the
efficient Fisher information as we did in Section \ref{sec:datex}, but this has
not been formalized for inference near critical points. Based on simulations
(Supplementary Material) and intuition, we conjecture
our results may be adapted to settings where the block of the Fisher information
corresponding to the nuisance parameters is not nearly singular, but that there
may be additional challenges otherwise. Second, in some mixed models, for
example with crossed random effects, there are few independent observations even
as the total number of observations grows. Then, a different asymptotic theory
may be of interest. There are results on the consistency of maximum likelihood
estimators in such settings \citep{Jiang2013, Ekvall.Jones2020}, but the
properties of test statistics are largely unknown. Third, efficient software
implementations of the proposed method for popular mixed models are needed:
creating the proposed confidence region in practice often
requires inverting the test-statistic numerically which can be computationally
expensive. This is in contrast to the Wald statistic which can be inverted
analytically, but similarly to the likelihood ratio statistic which in general
requires numerical inversion. A natural starting point when implementing a
numerical procedure would be to consider a grid of parameter values centered at
some reasonable estimates. For example, even in non-linear mixed models where
maximum likelihood estimation can be computationally expensive, fast approximate
maximum likelihood estimates and Wald confidence regions are often available
through penalized quasi-likelihood or Laplace approximation of the likelihood.
Some further remarks on computing are in the Supplementary Material.

\bibliographystyle{apalike}
\bibliography{conf_crit}

\end{document}